\numberwithin{equation}{section}
\def\tg{\widetilde g}
\def\R{{\mathfrak R}}
\def\th{\tilde h}
\newtheorem{theo}{Theorem}[section]
\newtheorem{lemm}{Lemma}[section]
\newtheorem{coro}{Corollary}[section]
\newtheorem{rema}{Remark}[section]
\newtheorem{defi}{Definition}[section]
\def\begeq{\begin{equation}}
\def\endeq{\end{equation}}
\def\p{\partial}
\def\lf{\left}
\def\ri{\right}
\def\R{\Bbb R}
\def\wt{\widetilde}
\def\hg{\hat g}
\def\hh{\hat h}
\def\hn{\hat n}
\def\hnabla{\hat\nabla}
\begin{document}
\title{large-sphere and small-sphere  limits of the Brown-York mass}
\author{Xu-Qian Fan}
\address{Department of Mathematics, Jinan University, Guangzhou,
 510632,
P. R. China.}
\author{Yuguang Shi$^1$}
\thanks{$^1$Research partially supported by 973 Program (2006CB805905)
 of China   and
Fok YingTong Education Foundation.}

\address{Key Laboratory of Pure and Applied mathematics, School of
 Mathematics Science, Peking University,
Beijing, 100871, P.R. China.} \email{ygshi@math.pku.edu.cn}

\author{Luen-Fai Tam$^2$}
\thanks{$^2$Research partially supported by Earmarked Grant of Hong
Kong \#CUHK403005.}
\address{The Institute of Mathematical Sciences and Department of
 Mathematics, The Chinese University of Hong Kong,
Shatin, Hong Kong, China.} \email{lftam@math.cuhk.edu.hk}
\renewcommand{\subjclassname}{%
  \textup{2000} Mathematics Subject Classification}
\subjclass[2000]{Primary 53C20; Secondary 83C99}
\date{November 2007}
\begin{abstract}
In this paper,  we will   study the limiting behavior of the
Brown-York mass of the coordinate spheres  in an asymptotically
flat manifold. Limiting behaviors  of volumes of regions related
to coordinate spheres are also obtained, including a discussion on
the isoperimetric mass  introduced by Huisken \cite{Huisken}.  We
will also study
 expansions of the Brown-York
mass and the Hawking mass of   geodesic spheres with center at a
fixed point $p$ of a three manifold. Some geometric consequences
will be derived.
\end{abstract}
\maketitle\markboth{Xu-Qian Fan, Yuguang Shi and Luen-Fai
Tam}{large-sphere and small-sphere  limits}

\section{Introduction}
In this work, we will discuss the large-sphere limit  of
  the Brown-York mass in an asymptotically flat manifold and the
  small-sphere limit  of
  the Brown-York mass near a point in a three dimensional
  manifold. We will also discuss the behaviors of large-sphere limit and
  small-sphere limit of other interesting quantities.

   Let us first recall some definitions. In general
relativity, asymptotically flat manifolds have great interests in
many problems. In this paper, we adopt the following definition of
asymptotically flat manifolds.

\begin{defi}\label{defaf}
A complete three manifold $(M,g)$ is said to be asymptotically flat
(AF) of order $\tau$ (with one end) if there is a compact subset $K$
such that $M\setminus K$ is diffeomorphic to $\R^3\setminus B_R(0)$
for some $R>0$ and in the standard coordinates in $\R^3$, the metric
$g$ satisfies:
\begin{equation} \label{daf1}
g_{ij}=\delta_{ij}+\sigma_{ij}
\end{equation}
with
\begin{equation} \label{daf2}
|\sigma_{ij}|+r|\p \sigma_{ij}|+r^2|\p\p\sigma_{ij}|+r^3|\p\p\p
\sigma_{ij}|=O(r^{-\tau}),
\end{equation}
for some constant $1\ge\tau>\frac{1}{2}$, where $r$ and $\p$ denote
the Euclidean distance and standard derivative operator on $\R^3$
respectively.
\end{defi}
A coordinate system of $M$ near infinity so that the metric tensor
in these coordinates satisfies the decay conditions in the
definition is said to be {\it admissible}. Note that some of the
results in the following do not need decays of the third order
derivatives of $\sigma_{ij}$.

\begin{defi}
The Arnowitt-Deser-Misner (ADM) mass (see \cite{ADM}) of an
asymptotically flat manifold $M$ is defined as:
\begin{equation} \label{defadm1}
m_{ADM}(M)=\lim_{r\to\infty}\frac{1}{16\pi}\int_{S_r}
\lf(g_{ij,i}-g_{ii,j}\ri)\nu^jd\Sigma_r^0,
\end{equation}
where $S_r$ is the Euclidean sphere, $d\Sigma_r^0$ is the volume
element induced by the Euclidean metric, $\nu$ is the outward unit
normal of $S_r$ in $\R^3$ and the derivative is the ordinary partial
derivative.
\end{defi}
We always assume that the scalar curvature is in $L^1(M)$ so that
the limit exists in the definition. Under the decay conditions in
the definition of AF manifold, the definition of ADM mass  is
independent of the choice of admissible coordinates  by the result
of Bartnik \cite{BTK86}.

 Let $\lf(\Omega, g\ri)$ be a compact three manifold with
smooth boundary $\p \Omega$. Suppose the Gauss curvature of
$\p\Omega$ is positive, then the Brown-York quasi-local mass of
$\p\Omega$ is defined as  (see \cite{BY1,BY2}):
\begin{defi}
\begin{equation} \label{defbym1}
m_{BY}\lf(\p \Omega\ri)=\frac{1}{8\pi}\int_{\p \Omega}(H_0-H)d\Sigma
\end{equation}
 where  $H$ is the mean curvature of $\p\Omega$ with respect to the
 outward unit normal and
  the metric $g$, $d\Sigma$ is the volume element induced on $\p\Omega$ by
 $g$ and $H_0$ is the mean curvature  of $\p \Omega$ when
  embedded in  $\R^3$.
\end{defi}
The Brown-York mass is well-defined because by the result of
Nirenberg \cite{Niren}, $\p\Omega$ can be isometrically embedded in
$\R^3$ and the embedding is unique by
\cite{Herglotz43,Sacksteder62,PAV}. In particular, $H_0$ is
completely determined by the metric on $\p\Omega$. However, this is
a global property.  In contrast, the norm of the mean curvature
vector of an embedding of $\p\Omega$ into the light cone in the
Minkowski space can be expressed explicitly in terms of the Gauss
curvature, see \cite{BLY99}. Hence in the study of Brown-York mass,
one of the difficulties  is to estimate $\int_{\p\Omega}H_0d\Sigma$.
We will use the Minkowski formulae \cite{KLBG} and the estimates of
Nirenberg \cite{Niren} in this regard.

In the first part of this  paper, we want to study limiting
behaviors of Brown-York mass on large spheres.  We will verify the
following:

\begin{theo}\label{thmlc-in}
Let $(M, g )$ be an asymptotically flat manifold of order
$\tau>\frac12$ with one end and let $S_r$ be the coordinate
spheres in some admissible coordinates. Then
$$
\lim_{r\rightarrow \infty}m_{BY}(S_r)=m_{ADM}(M).
$$
Here $m_{BY}(S_r)$ is the Brown-York quasi-local mass of $S_r$, and
$m_{ADM}(M)$ is the Arnowitt-Deser-Misner (ADM) mass of $M$.
\end{theo}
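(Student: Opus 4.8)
The plan is to expand the two mean-curvature integrals $\int_{S_r}H\,d\Sigma$ and $\int_{S_r}H_0\,d\Sigma$ individually around $8\pi r$ and to match the difference of the subleading terms with the ADM integral. Throughout write $\bar\nu=x/r$ for the Euclidean unit normal of $S_r$ and use $g_{ij}=\delta_{ij}+\sigma_{ij}$ with the decay of Definition \ref{defaf}; set $\sigma_{\bar\nu\bar\nu}=\sigma_{ij}\bar\nu^i\bar\nu^j$ and $\sigma_{\bar\nu\bar\nu,\bar\nu}=\sigma_{ij,k}\bar\nu^i\bar\nu^j\bar\nu^k$. \emph{Step 1: the intrinsic side.} Viewing $S_r$ as a level set of $|x|$, I expand the $g$-unit normal $\nu_g^i=\bar\nu^i-\sigma_{ij}\bar\nu^j+\tfrac12\sigma_{\bar\nu\bar\nu}\bar\nu^i+O(|\sigma|^2)$, the mean curvature $H=\operatorname{div}_g\nu_g=\frac{1}{\sqrt{\det g}}\partial_i(\sqrt{\det g}\,\nu_g^i)$, and the area element $d\Sigma=\big(1+\tfrac12(\sigma_{kk}-\sigma_{\bar\nu\bar\nu})\big)\,d\Sigma_r^0+O(|\sigma|^2)\,d\Sigma_r^0$ to first order in $\sigma$. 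Keeping only the contributions that do not vanish when integrated over $S_r$ — the quadratic terms contribute $O(r^{1-2\tau})=o(1)$ since $\tau>\tfrac12$ — this yields
\[
\int_{S_r}H\,d\Sigma=8\pi r+\int_{S_r}\left(\frac{1}{r}\sigma_{\bar\nu\bar\nu}-\sigma_{ij,i}\bar\nu^j+\frac12\sigma_{\bar\nu\bar\nu,\bar\nu}+\frac12\sigma_{ii,j}\bar\nu^j\right)d\Sigma_r^0+o(1).
\]

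\emph{Step 2: the extrinsic side.} Since the intrinsic Gauss curvature of $(S_r,\gamma)$ is $r^{-2}+O(r^{-2-\tau})>0$ for $r$ large, $(S_r,\gamma)$ embeds in $\R^3$ as a closed convex surface, unique up to rigid motion; let $X,N,K,H_0$ be its position vector, outward normal, Gauss and mean curvatures. Put $\rho=(|S_r|_\gamma/4\pi)^{1/2}$, so that $4\pi\rho^2=\int_{S_r}d\Sigma$ and, by Step 1, $\rho=r+\frac{1}{16\pi r}\int_{S_r}(\sigma_{kk}-\sigma_{\bar\nu\bar\nu})\,d\Sigma_r^0+o(1)$; write $H_0=\tfrac2\rho+\widehat H$, $\langle X,N\rangle=\rho+\widehat h$, $K=\tfrac1{\rho^2}+\widehat K$. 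The two Minkowski formulae $\int_{S_r}H_0\,d\Sigma=2\int_{S_r}K\langle X,N\rangle\,d\Sigma$ and $\int_{S_r}H_0\langle X,N\rangle\,d\Sigma=2|S_r|_\gamma$ (see \cite{KLBG}), combined with Gauss--Bonnet ($\int_{S_r}K\,d\Sigma=4\pi$, hence $\int_{S_r}\widehat K\,d\Sigma=0$), give after elementary algebra
\[
\int_{S_r}H_0\,d\Sigma=8\pi\rho+\frac{2}{\rho^2}\int_{S_r}\widehat h\,d\Sigma+2\int_{S_r}\widehat K\widehat h\,d\Sigma,\qquad \int_{S_r}\widehat h\,d\Sigma=-\frac{\rho^2}{2}\int_{S_r}\widehat K\widehat h\,d\Sigma-\frac{\rho}{4}\int_{S_r}\widehat H\widehat h\,d\Sigma.
\]
This is where Nirenberg's estimates \cite{Niren} enter: after rescaling by $\rho^{-1}$, the metric is $C^2$-close to the round metric of the unit sphere to within $O(r^{-\tau})$, so by the a priori estimates for the Weyl problem the embedded surface is $C^2$-close to the round sphere of radius $\rho$, whence $\widehat H=O(r^{-1-\tau})$ and $\widehat h=O(r^{1-\tau})$ (while $\widehat K=O(r^{-2-\tau})$ is intrinsic). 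Then $\int_{S_r}\widehat h\,d\Sigma=O(r^{3-2\tau})$, and since $\tau>\tfrac12$ both correction integrals above are $o(1)$, giving $\int_{S_r}H_0\,d\Sigma=8\pi\rho+o(1)=8\pi r+\frac{1}{2r}\int_{S_r}(\sigma_{kk}-\sigma_{\bar\nu\bar\nu})\,d\Sigma_r^0+o(1)$.

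\emph{Step 3: matching.} Subtracting, $8\pi\,m_{BY}(S_r)=\int_{S_r}(H_0-H)\,d\Sigma$ equals, up to $o(1)$, an explicit combination of $\tfrac1r\int_{S_r}\sigma_{kk}$, $\tfrac1r\int_{S_r}\sigma_{\bar\nu\bar\nu}$, $\int_{S_r}\sigma_{ij,i}\bar\nu^j$, $\int_{S_r}\sigma_{\bar\nu\bar\nu,\bar\nu}$ and $\int_{S_r}\sigma_{ii,j}\bar\nu^j$ (all integrals against $d\Sigma_r^0$). Writing $\partial_i=\bar\nu^i\partial_{\bar\nu}+(\delta^{ik}-\bar\nu^i\bar\nu^k)\partial_k$ one has $\sigma_{ij,i}\bar\nu^j=\sigma_{\bar\nu\bar\nu,\bar\nu}+(\delta^{ik}-\bar\nu^i\bar\nu^k)(\partial_k\sigma_{ij})\bar\nu^j$; integrating the tangential piece over the closed surface $S_r$ (its tangential divergence integrates to zero, and $\partial_k\bar\nu^j=\tfrac1r(\delta^{kj}-\bar\nu^k\bar\nu^j)$) produces the identity
\[
\int_{S_r}\sigma_{ij,i}\bar\nu^j\,d\Sigma_r^0=\int_{S_r}\sigma_{\bar\nu\bar\nu,\bar\nu}\,d\Sigma_r^0+\frac{3}{r}\int_{S_r}\sigma_{\bar\nu\bar\nu}\,d\Sigma_r^0-\frac{1}{r}\int_{S_r}\sigma_{kk}\,d\Sigma_r^0.
\]
Substituting this identity and simplifying, the combination collapses to $8\pi\,m_{BY}(S_r)=\tfrac12\int_{S_r}(\sigma_{ij,i}-\sigma_{ii,j})\bar\nu^j\,d\Sigma_r^0+o(1)$, whose limit is $8\pi\,m_{ADM}(M)$ by the definition of the ADM mass (the limit existing since the scalar curvature lies in $L^1$). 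This proves the theorem.

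The main obstacle is Step 2. Because $H_0$ is defined only through a global isometric embedding it cannot be computed pointwise from $g$, so one must use the Minkowski integral formulae to trade $\int_{S_r}H_0\,d\Sigma$ for the intrinsic Gauss curvature together with the support function $\langle X,N\rangle$, and then invoke Nirenberg's estimates for the Weyl problem to control $\int_{S_r}(\langle X,N\rangle-\rho)\,d\Sigma$; it is precisely at this last point that the hypothesis $\tau>\tfrac12$ is used, to force this integral to be $o(r^2)$ and hence not to contribute in the limit.
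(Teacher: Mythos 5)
Your proof is correct and follows essentially the same route as the paper: your Step 1 is the computation behind Lemma \ref{meancurvint} (the direct linearization of $H\,d\Sigma$ combined with the sphere integration-by-parts identity from \cite[(5.17)]{HI} is equivalent to the paper's comparison of the two expressions for $\mathcal{A}'(r)$), and your Step 2 reproduces Lemmas \ref{isomr1} and \ref{upperbound} (Nirenberg's a priori estimates feeding the two Minkowski formulae), with your normalization by the area radius $\rho$ in place of the paper's elimination of the enclosed Euclidean volume $V_0(r)$ being only an organizational difference. All the expansions, the error orders under $\tau>\frac12$, and the final cancellation leaving exactly the ADM integrand check out.
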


Theorem \ref{thmlc-in} was observed and proved to be true by many
people, see the works of Brown-York \cite{BY2}, Hawking-Horowitz
\cite{HKGH1}, Braden-Brown-Whiting-York \cite{BBWYY} and
Baskaran-Lau-Petrov \cite{BLP}, see also \cite{ST02}. However, in
this paper, we will use a different method to derive Theorem
\ref{thmlc-in}. Interestingly, our method leads to the following
volume comparison result. Let $V(r)$ be the volume with respect to
an AF metric $g$ of the region inside $S_r$ and let $V_0(r)$ be the
Euclidean volume inside the surface $S_r$ when embedded in $\R^3$.
\begin{theo}\label{vol-comparison-L-in}
 Let $(M,g)$ be an asymptotically flat
manifold of order $\tau>\frac12$ with one end.   Then
\begin{equation}\label{volcompL-e1-in}
    V_0(r)-V(r)=-2m_{ADM}(M)\pi r^2+o(r^2).
\end{equation}
\end{theo}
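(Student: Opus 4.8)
The plan is to compute both $V_0(r)$ and $V(r)$ by expanding the relevant volume forms in the parameter $r$ and comparing the $r^2$-terms. First I would handle $V(r)$, the volume enclosed by $S_r$ in the AF metric $g$. In the admissible coordinates the volume form is $dV_g = \sqrt{\det g}\,dx = \big(1 + \tfrac12 \tr\sigma + O(r^{-2\tau})\big)\,dx$, where $\tr\sigma = \delta^{ij}\sigma_{ij}$. Hence
\begin{equation}\label{eq-prop-V}
V(r) = \frac{4}{3}\pi r^3 + \frac12\int_{B_r} \tr\sigma\, dx + \int_{B_r} O(r^{-2\tau})\,dx,
\end{equation}
and since $2\tau>1$ the last integral is $o(r^2)$; the middle term, using $\sigma_{ij}=O(r^{-\tau})$, is $O(r^{3-\tau})$, which can be as large as $O(r^{5/2})$, so this term must be tracked carefully rather than discarded. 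The cleanest way to pin it down is to relate $\int_{B_r}\tr\sigma$ to the ADM mass. I would use the divergence/Bartnik-type identity: writing $\tr\sigma = \sum_i \sigma_{ii}$ and integrating by parts, together with the fact (following from the definition of $m_{ADM}$ in \eqref{defadm1} and the decay hypotheses) that the flux integrals $\int_{S_\rho}(\sigma_{ij,i}-\sigma_{ii,j})\nu^j$ converge to $16\pi m_{ADM}(M)$, one extracts the leading behaviour of $\int_{B_r}\tr\sigma$. Essentially, a careful bookkeeping of the bulk and boundary terms shows $\tfrac12\int_{B_r}\tr\sigma\,dx$ contributes exactly the coefficient of $r^2$ we are after; this is the technical heart of the $V(r)$ side.

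Next I would handle $V_0(r)$, the Euclidean volume enclosed by the isometric embedding of $(S_r, g|_{S_r})$ into $\R^3$. Here the key input is Nirenberg's estimates \cite{Niren} (already invoked in the excerpt for the Brown-York mass), which give the embedding together with precise asymptotics: the embedded sphere is a small perturbation of the round sphere of an appropriate radius. Writing the induced metric on $S_r$ as $\sigma^0 + $ (correction of order $r^{2-\tau}$ relative to the round metric of radius $r$), the embedding $X_0\colon S_r\to\R^3$ satisfies $X_0 = r\,\theta + (\text{lower order})$ with the correction controlled, in $C^2$, by the correction to the metric. Then $V_0(r) = \tfrac13\int_{S_r} \langle X_0, \nu_0\rangle\, d\Sigma_0$ by the Euclidean divergence theorem, and expanding $X_0$ and $d\Sigma_0$ in the perturbation yields $V_0(r) = \tfrac43\pi r^3 + (\text{a boundary integral of the metric perturbation}) + o(r^2)$.

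Finally I would subtract. The $\tfrac43\pi r^3$ terms cancel exactly, and one is left comparing a bulk integral $\tfrac12\int_{B_r}\tr\sigma$ (from $V$) with a surface integral over $S_r$ of the perturbation of the induced metric (from $V_0$). Converting the bulk integral to a boundary term via the divergence theorem, both reduce to flux-type integrals over $S_r$ of $\sigma$ and its first derivatives; using the decay \eqref{daf2} to discard curvature-type remainders of size $o(r^2)$, and the definition of $m_{ADM}$ to evaluate the surviving flux, gives $V_0(r)-V(r) = -2m_{ADM}(M)\pi r^2 + o(r^2)$. I expect the main obstacle to be precisely this matching step: one has to show that the "Euclidean" surface integral coming from Nirenberg's embedding and the "intrinsic" bulk integral coming from $\sqrt{\det g}$ combine so that all $O(r^{5/2})$-type terms either cancel or organize into the ADM flux, with everything else genuinely $o(r^2)$. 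This requires Nirenberg's estimates to second order and a careful analysis of which components of $\sigma$ survive; the asymmetric decay rate ($\tau$ can be as low as just above $\tfrac12$) is what makes the error accounting delicate.
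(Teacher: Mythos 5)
Your strategy --- expand $V(r)$ and $V_0(r)$ separately and match the $r^2$ coefficients --- has a genuine gap at exactly the two places you flag as ``the technical heart.'' On the $V(r)$ side, the bulk term $\frac12\int_{B_r}\tr\sigma\,dx$ is a zeroth-order quantity in $\sigma$ of size $O(r^{3-\tau})$, whereas the ADM mass is by definition the flux of the particular first-derivative combination $\sigma_{ij,i}-\sigma_{ii,j}$; no integration by parts turns the former into the latter. (In Schwarzschild in isotropic coordinates one finds $\frac12\int_{B_r}\tr\sigma=6\pi m r^2+o(r^2)$ while $V_0$ contributes $4\pi m r^2$: neither piece is individually an ADM flux, only their difference is $-2\pi m r^2$.) So this term cannot be ``evaluated'' by the ADM mass; it can only be cancelled against a matching contribution from $V_0$. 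That is the second, more serious, gap: Nirenberg's estimates, in the form actually available (Lemma \ref{isomr1}), give only $X_r\cdot n_0=r+O(r^{1-\tau})$, hence $V_0(r)=\frac13\int_{S_r}X_r\cdot n_0\,d\Sigma_r=\frac43\pi r^3+O(r^{3-\tau})$. Since $\tau$ may be barely above $\frac12$, this error swamps $r^2$; to read off the $r^2$ coefficient of $V_0$ you would need the \emph{mean} of the support function to precision $o(1)$, which the cited embedding estimates do not supply. You correctly identify this matching as the main obstacle, but you give no mechanism for carrying it out.

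The paper avoids both difficulties. For $V_0$ it pairs the two Minkowski integral formulae $\int_{S_r}H_0\,d\Sigma_r=2\int_{S_r}K\,X_r\cdot n_0\,d\Sigma_r$ and $2\mathcal{A}(r)=\int_{S_r}H_0\,X_r\cdot n_0\,d\Sigma_r$: the crude error $O(r^{1-\tau})$ in $X_r\cdot n_0$ only ever multiplies quantities that are already $O(r^{-1-\tau})$ or $O(r^{-2-\tau})$, so eliminating $\int H_0$ between the two identities yields the closed formula $V_0(r)=\frac12 r\mathcal{A}(r)-\frac{2\pi r^3}{3}+O(r^{3-2\tau})$ with no finer knowledge of the embedding. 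For $V$ it never expands $\sqrt{\det g}$ over the ball; instead it writes the co-area formula $V'(r)=\mathcal{A}(r)+\frac12\int_{S_r}\sigma_{ij}x^ix^jr^{-2}\,d\Sigma_r+O(r^{2-2\tau})$ alongside the first-variation identity $\mathcal{A}'(r)=\mathcal{A}(r)/r+4\pi r-8\pi m+\int_{S_r}\sigma_{ij}x^ix^jr^{-3}\,d\Sigma_r+o(1)$ from Lemma \ref{meancurvint}, eliminates the common $\sigma$-integral, and integrates the resulting differential relation to get $V(r)=\frac12 r\mathcal{A}(r)-\frac{2\pi r^3}{3}+2\pi m r^2+o(r^2)$. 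Subtraction then cancels the (unknown to order $o(r^3)$) quantity $\frac12 r\mathcal{A}(r)$. To salvage your route you would need either a refined isometric embedding estimate controlling the average of $X_r\cdot n_0-r$ to $o(1)$, or an identity expressing $\frac12\int_{B_r}\tr\sigma$ minus the $V_0$ surface term as an ADM-type flux; neither is provided, so the proposal is incomplete as it stands.
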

Hence if the ADM mass is nonnegative, then
$\lim_{r\to\infty}r^{-2}(V(r)-V_0(r))\ge0$. Combining this with
Positive Mass Theorem, if we further assume that the scalar
curvature is nonnegative, then the limit is zero if and only if
$M$ is isometric to $\R^3$.

In \cite{Huisken}, a notion of {\it isoperimetric mass} $m_{ISO}(M)$
of an AF manifold is introduced by Huisken. It is defined as:

$$
m_{ISO}=\limsup_{r\to\infty}\frac2{\mathcal{A}(r)}\lf(V(r)-\frac1{6\pi^\frac12}\mathcal{A}^\frac32(r)\ri)
$$
where $V(r)$ is as before and $\mathcal{A}(r)$ is the area of the
coordinate sphere with respect to the AF metric. Using the method of
proof of Theorem \ref{vol-comparison-L-in}, Miao \cite{Miao} proves
that the isoperimetric mass and the ADM mass of an AF manifold are
equal. We will include Miao's result in this work.

 In the second part of the paper, we will consider the small
sphere limit of the Brown-York mass. Let $r$ be the distance to
the fixed point $p$, and $R(p)$ is the scalar curvature evaluated
at $p$. We have the following:

\begin{theo}\label{smallspherelimit-in}
Let $(N,g)$ be a Riemannian manifold of dimension three, $p$ be a
fixed interior point on $N$, and $S_r$ be the geodesic sphere of
radius $r$ center at $p$. For $r$ small enough, we have
\begin{equation}\label{smallspherelimit-e1-in}
m_{BY}(S_r)=\frac{r^3}{12}R(p)+\frac{r^5}{1440}\lf[24|Ric|^2
(p)-13R^{2}(p) + 12 \Delta R (p)\ri]+O(r^6),
\end{equation}
where, $\Delta$ is Laplacian operator of $(M,g)$ and $|Ric|$ is
the norm of the Ricci curvature.
\end{theo}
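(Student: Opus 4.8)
The plan is to compute both terms in the definition $m_{BY}(S_r)=\frac1{8\pi}\int_{S_r}(H_0-H)\,d\Sigma$ by separate asymptotic expansions in $r$, and then subtract. The key point is that all three ingredients — the induced metric on $S_r$, its mean curvature $H$ in $(N,g)$, and the mean curvature $H_0$ of its isometric embedding into $\mathbb R^3$ — are determined by local invariants of $g$ at $p$, so one works in geodesic normal coordinates centered at $p$ and carries the expansion of the metric to sufficiently high order. Concretely, $g_{ij}=\delta_{ij}-\frac13 R_{ikjl}x^kx^l-\frac16 R_{ikjl,m}x^kx^lx^m+\bigl(-\frac1{20}R_{ikjl,mn}+\frac2{45}R_{ikps}R_{jlqs}\bigr)... $ and so on; since we want $m_{BY}$ up to $O(r^6)$, and $m_{BY}$ carries a factor $r^3$ relative to curvature, I expect to need the metric expansion through terms of order $x^5$ (equivalently, through $\nabla^3\mathrm{Rm}$, with the quadratic-in-$\mathrm{Rm}$ corrections at order $x^4$). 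It is convenient to write everything in terms of $t=r$ and the geodesic polar form $g=dr^2+r^2\sigma(r)$, where $\sigma(r)$ is a family of metrics on $S^2$ whose Taylor coefficients in $r$ are universal polynomials in $R_{ijkl}(p)$ and its covariant derivatives, contracted with the unit sphere metric.

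First I would expand $H$, the mean curvature of the geodesic sphere. In polar form $H=\partial_r\log\sqrt{\det(r^2\sigma(r))}$, which after integrating over $S_r$ and using the standard curvature expansion of the volume element gives $\int_{S_r}H\,d\Sigma$ as an explicit series; the relevant universal integrals over $S^2$ reduce, after averaging, to combinations of $R(p)$, $|\mathrm{Ric}|^2(p)$, $|\mathrm{Rm}|^2(p)$, $\Delta R(p)$ (the odd-order terms integrate to zero). This is essentially the computation underlying Hawking-mass expansions, so I would lean on known formulas for $\int_{S_r}H\,d\Sigma$ and $\mathrm{Area}(S_r)$. Second — and this is the real work — I would obtain $\int_{S_r}H_0\,d\Sigma$. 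Here one does not have $N$ at all: one only knows the induced metric $\gamma_r$ on $S_r$, which is an abstract metric on $S^2$, $C^2$-close to the round metric of radius $r$, namely $\gamma_r=r^2(\mathring\sigma+r^2 h^{(2)}+r^3h^{(3)}+r^4h^{(4)}+\cdots)$ with the $h^{(k)}$ built from curvature at $p$. One must show this embeds uniquely in $\mathbb R^3$ as a convex surface (for $r$ small this follows from closeness to the round sphere together with Nirenberg's theorem, cited in the excerpt) and compute $\int H_0\,d\Sigma$ for the embedding. The clean way is to use the Minkowski/Gauss–Bonnet-type identity: for a convex surface in $\mathbb R^3$, $\int H_0\,d\Sigma$ can be expressed through the support function and, via the Minkowski formula referenced as \cite{KLBG}, related to intrinsic data; alternatively one perturbatively solves the embedding equation around the round sphere, writing the position vector as $X=r(1+r^2 f_2+\cdots)\vec n$ plus tangential corrections, matching the first fundamental form order by order, and reading off $H_0$. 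Either route expresses $\int_{S_r}H_0\,d\Sigma$ in terms of the Gauss curvature $K_{\gamma_r}$ of $\gamma_r$ and its derivatives, which in turn are explicit in $R(p),|\mathrm{Ric}|^2(p),|\mathrm{Rm}|^2(p),\Delta R(p)$.

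Finally I would subtract: $\frac1{8\pi}\int_{S_r}(H_0-H)\,d\Sigma$. The leading $r\to 0$ behaviors of $\int H_0$ and $\int H$ both equal $8\pi r+O(r^3)$ (each surface is nearly round), so the $O(r)$ terms cancel; the $O(r^3)$ discrepancy is controlled by the scalar curvature and produces $\frac{r^3}{12}R(p)$, and the $O(r^5)$ discrepancy produces the bracketed term, after the Gauss–Bonnet identity $\int_{S^2}K_{\gamma_r}=4\pi$ is used to eliminate $|\mathrm{Rm}|^2$ in favor of $|\mathrm{Ric}|^2$ and $R^2$ (in dimension three $|\mathrm{Rm}|^2=4|\mathrm{Ric}|^2-R^2$, which is presumably exactly why $|\mathrm{Rm}|^2$ does not appear in the stated answer). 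The error term $O(r^6)$ comes from the first uncomputed term in the metric expansion, which is odd and might be expected to vanish on integration, giving in fact $O(r^7)$; stating $O(r^6)$ is the safe claim.

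The main obstacle I anticipate is the $H_0$ computation: handling the isometric embedding into $\mathbb R^3$ with enough precision, i.e. controlling the nonlinear Weyl/Nirenberg embedding problem to the order needed and extracting $\int H_0\,d\Sigma$ cleanly. The trick that makes it tractable is to never embed explicitly but instead express $\int_{\partial\Omega}H_0\,d\Sigma$ through the intrinsic Gauss curvature using the Minkowski formulae of \cite{KLBG} together with Nirenberg's a priori estimates \cite{Niren}, exactly the strategy announced in the introduction; then the whole computation becomes a finite, if tedious, exercise in spherical harmonics and curvature identities at $p$.
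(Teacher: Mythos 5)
Your outline follows essentially the same route as the paper: expand $g$ in normal coordinates at $p$, get $\int_{S_r}H\,d\Sigma_r$ from $\frac{d}{dr}\mathcal{A}(r)$, and get $\int_{S_r}H_0\,d\Sigma_r$ from the two Minkowski formulae combined with Nirenberg's estimates. One caution, though: your closing claim that one can ``never embed explicitly'' does not survive at order $r^5$. The Minkowski identities $\int H_0\,d\Sigma=2\int K\,Z\cdot n\,d\Sigma$ and $2\mathcal{A}=\int H_0\,Z\cdot n\,d\Sigma$ leave behind a correction term of the form $\int k_0 n_3\,d\Sigma$, where $n_3$ is the $r^3$-order part of the support function $Z\cdot n$; this term is exactly of size $r^5$ and contributes $\frac{\pi r^5}{270}(64|Ric|^2-23R^2)$ to the answer, so it cannot be discarded. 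To compute $n_3$ the paper must carry out precisely the perturbative embedding you mention only as an ``alternative'': it writes down an explicit map $\hat Z$ whose induced metric matches $g|_{S_r}$ to the needed order, computes $\hat Z\cdot\hat n$ directly, and then uses Nirenberg's $C^2$-closeness of the true embedding to $\hat Z$ to transfer the formula. So the two routes you present as interchangeable are in fact both required, and the explicit approximate embedding is the real work. Two smaller points: the metric expansion through order $|x|^4$ already suffices (the $|x|^5$ term only affects $O(r^6)$ and beyond), and the identity $|\mathrm{Rm}|^2=4|\mathrm{Ric}|^2-R^2$ used to eliminate the full curvature tensor is the pointwise three-dimensional decomposition $R_{ijkl}=g_{ik}R_{jl}-\cdots-\frac12R(g_{ik}g_{jl}-g_{il}g_{jk})$, not a consequence of Gauss--Bonnet. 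With the explicit embedding step supplied, your plan matches the paper's proof.
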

Let  $M$ be an AF manifold with nonnegative scalar curvature.
Suppose the Brown-York mass of the coordinate spheres converge to
zero, then $M$ must be the Euclidean space by Theorem
\ref{thmlc-in}
  and the Positive Mass Theorem in \cite{SYL79, Wit81}.
  By Theorem \ref{smallspherelimit-in}, we have similar result near a
  point $p$. Namely, assume   $R\ge 0$ in a neighborhood of $p$,
then
 \begin{equation}\label{nonnegative}
 \lim_{r\rightarrow 0}\frac{m_{BY}(S_r)}{r^5}\ge 0.
 \end{equation}
Equality holds if and only if  $(N,g)$ is flat at $p$ and $R$
vanishes up to second order at $p$.

There are  results on the small-sphere limits obtained by
Brown-Lau-York \cite{BLY99}. They consider a cut $S_r$ with an
affine radius $r$ of the light cone at a point $p$ in a Lorentz
manifold. Using the light cone of reference, they show that the
expansion of the  quasi-local energy is:
$$
E=\frac{4\pi r^3}{3}T_{ab}n^an^b+o(r^3),
$$
where $T_{ab}$ is the energy momentum tensor and $n$ is the unit
future pointing time like vector defining the choice of the affine
parameter. In our case, if we consider the Lorentz manifold
$\R\times N$ with metric $\wt g=-dt^2+g$, and suppose the metric
satisfies the Einstein equation:
$$
\wt R_{ab}-\frac12 \wt R\wt g_{ab}=8\pi T_{ab}.
$$
Let $n=\frac{\p}{\p t}$ be the future pointing unit normal, then

$$
\frac{R(p)}{12}=\frac{4\pi r^3}{3}T_{ab}n^an^b.
$$
Hence $r^3$ term of the expansion in our case is similar to that
in \cite{BLY99}. However, we are using Euclidean reference and we
only consider the time symmetric case.

In the case of   vacuum space-time,  Brown-Lau-York \cite{BLY99}
also obtain the $r^5$ term in the expansion of $E$ as follows:

$$
E_5=\frac{r^5}{90}T_{abcd}n^an^ba^cn^d
$$
where $T_{abcd}$ is the Bel-Robinson tensor, which depends only on
the curvature tensor (and the metric). In Theorem
\ref{smallspherelimit}, the space-time is not vacuum in general
and is time symmetric. The coefficient  of the term $r^5$ depends
not only on the curvature tensor, but also on the derivative of
the scalar curvature. For the sake of comparison, in our case, one
can compute that $T_{0000}=\frac18(4|Ric|^2-R^2)$. We use the
definition of Bel-Robinson tensor as in (5) of \cite{Douglas}

Next we want to compare   the expansion of the Hawking mass with the
expansion of the Brown-York mass for small spheres. Recall the
definition of the Hawking mass. Let $(\Omega,g)$ be a smooth three
manifold with boundary $\p\Omega$ and let $H$ be the mean curvature
on $\p \Omega$ with respect to the outward unit normal, the Hawking
quasi-local mass is  defined as (see \cite{HKG}):
\begin{defi}
\begin{equation} \label{defhkgm1}
m_H(\p \Omega)=\frac{|\p
\Omega|^{1/2}}{(16\pi)^{3/2}}\lf(16\pi-\int_{\p
\Omega}H^2d\Sigma\ri)
\end{equation}
 where $d\Sigma$ is the volume element induced on $\p\Omega$ by
 $g$
 and $|\p \Omega|$ is the area of $\p \Omega$.
\end{defi}
With  the same notations and assumptions in Theorem
\ref{smallspherelimit-in}, the expansion of $m_H(S_r)$ is given
by:
\begin{equation}
\begin{split}
m_H (S_r)=\frac{r^3}{12}R(p)+\frac{r^5}{720}\lf(6\Delta R (p)-5
R^2(p)\ri)+O(r^6).
\end{split}
\end{equation}
One can see that $m_{BY}(S_r)$ and $m_H(S_r)$ are equal up to the
term with order $r^3$. However, the terms of order $r^5$ are
different. In particular, if the scalar curvature is zero near
$p$, but it is non-flat at $p$, then $r^{-5}m_{BY}(S_r)>0$ and
$m_H(S_r)=O(r^6)$ for small $r$.

As in the large-sphere case, one can also   compare $V(r)$ and
$V_0(r)$, where $V(r)$ is the volume of the geodesic ball of
radius $r$ at $p$ and $V_0(r)$ which is the volume of the region
bounded by $S_r$ when embedded in $\R^3$.

The paper is organized as follows. In Section 2, the limit of
behavior of Brown-York mass in large sphere and volume comparison
are proved; in Section 3, small sphere limit of Brown-York mass and
Hawking mass and small sphere  volume comparison  are proved.

The authors would like to thank Robert Bartnik, Yanyan Li and
Pengzi Miao for  useful discussions.

\section{large-sphere  limit}
 In this section, we will first prove the following theorem (Theorem
 \ref{thmlc-in}).

 \begin{theo} \label{thmlc}
Let $(M,g)$ be an asymptotically flat manifold of order
$\tau>\frac12$ with one end and let $S_r$ be the coordinate
spheres in some admissible coordinates. Then
$$
\lim_{r\to\infty}m_{BY}(S_r)=m_{ADM}(M).
$$
Here $m_{BY}(S_r)$ is the Brown-York quasi-local mass of $S_r$ and
$m_{ADM}(M)$ is the Arnowitt-Deser-Misner (ADM) mass of $M$.
\end{theo}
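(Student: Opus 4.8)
\medskip
\noindent\emph{Proof strategy.} Since $m_{BY}(S_r)=\frac1{8\pi}\bigl(\int_{S_r}H_0\,d\Sigma-\int_{S_r}H\,d\Sigma\bigr)$, the plan is to expand each of the two integrals up to an error $o(1)$ as $r\to\infty$ and to check that the difference equals $8\pi\,m_{ADM}(M)+o(1)$. The mean curvature $H$ of the coordinate sphere $\{|x|=r\}$ in $(M,g)$ is computed directly from $g_{ij}=\delta_{ij}+\sigma_{ij}$: writing $H=\operatorname{div}_g\!\bigl(\nabla^g r/|\nabla^g r|_g\bigr)$ and using $\partial_j r=x_j/r$, one gets $H=\frac2r+L(\sigma)+Q$, where $L(\sigma)$ is an explicit expression linear in $\sigma$ and $\partial\sigma$ of pointwise size $O(r^{-1-\tau})$ and $Q=O(r^{-1-2\tau})$. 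Since the Euclidean induced metric on $S_r$ is $\gamma_0=r^2g_{S^2}$ and the $g$-induced metric is $\gamma=\gamma_0+\sigma|_{S_r}$, the area elements are related by $d\Sigma=(1+\tfrac12\gamma_0^{AB}\sigma_{AB}+\cdots)\,d\Sigma^0$, and one obtains, up to $o(1)$, $\int_{S_r}H\,d\Sigma=8\pi r+\int_{S_r}\bigl(L(\sigma)+\tfrac1r\gamma_0^{AB}\sigma_{AB}\bigr)d\Sigma^0$; the discarded quadratic terms are $O(r^{-1-2\tau})$ pointwise and hence contribute $O(r^{1-2\tau})=o(1)$ over $S_r$, using $\tau>\tfrac12$.

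\medskip
The main work is the term $\int_{S_r}H_0\,d\Sigma$, for which no closed formula is available. As $r^{-2}\gamma$ is a $C^2$-small perturbation of the round unit metric, the Gauss curvature of $\gamma$ satisfies $K=\frac1{r^2}+O(r^{-2-\tau})>0$ for $r$ large, so by Nirenberg's theorem \cite{Niren} the surface $(S_r,\gamma)$ embeds isometrically and uniquely as a convex surface $\widehat S_r\subset\R^3$ and $m_{BY}(S_r)$ is defined. Because the embedding is isometric, $d\Sigma$ is the Euclidean area element of $\widehat S_r$, so the Minkowski formula \cite{KLBG} gives $\int_{S_r}H_0\,d\Sigma=2\int_{\widehat S_r}K\,u\,dA$, where $u=\langle X,\nu_0\rangle$ is the Euclidean support function of $\widehat S_r$. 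The virtue of this identity is that $K$ is intrinsic, hence already known from $\gamma$, and the only extrinsic ingredient is $u$; this is where Nirenberg's a priori estimates enter, giving (after a suitable choice of origin) $u=r+v$ with $v=O(r^{1-\tau})$ and matching bounds on its derivatives. Using Gauss--Bonnet, $\int_{\widehat S_r}K\,dA=4\pi$, and the divergence theorem, $\int_{\widehat S_r}u\,dA=3V_0(r)$, one gets, up to $o(1)$, $\int_{S_r}H_0\,d\Sigma=8\pi r+\frac2{r^2}\bigl(3V_0(r)-r\,\mathcal{A}(r)\bigr)$, since the remaining error $2\int_{\widehat S_r}(K-r^{-2})v\,dA$ is $O(r^{1-2\tau})$. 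Expanding $\mathcal{A}(r)=\int_{S_r}d\Sigma$ and, again with Nirenberg's estimates, $V_0(r)$ in terms of $\sigma|_{S_r}$ rewrites $\frac2{r^2}(3V_0(r)-r\mathcal{A}(r))$, up to $o(1)$, as an integral over $S_r$ of an expression linear in $\sigma$ and its tangential derivatives.

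\medskip
Subtracting the two expansions, the leading terms $8\pi r$ cancel and there remains a fixed linear combination of integrals over $S_r$ of quantities linear in $\sigma$ and $\partial\sigma$. Using integration by parts on $S_r$ to trade tangential derivatives of $\sigma$ for normal ones, together with standard identities for the angular integrals, this combination collapses to $\tfrac12\int_{S_r}(g_{ij,i}-g_{ii,j})\nu^j\,d\Sigma^0+o(1)$. By the definition of the ADM mass (the limit existing because $\tau>\tfrac12$ and the scalar curvature is in $L^1$), this is $8\pi\,m_{ADM}(M)+o(1)$. Hence $\int_{S_r}(H_0-H)\,d\Sigma=8\pi\,m_{ADM}(M)+o(1)$ and $m_{BY}(S_r)\to m_{ADM}(M)$.

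\medskip
The genuine difficulty is not with the quadratic remainders, all of which are $O(r^{-1-2\tau})$ or $O(r^{-2-2\tau})$ pointwise and therefore disappear after integration once $\tau>\tfrac12$; it is with the linear terms. For $\tau<1$ each individual linear integral over $S_r$ is only $O(r^{1-\tau})$ and need not even remain bounded, so the estimates cannot be carried out term by term: the precise cancellations between the expansions of $\int_{S_r}H\,d\Sigma$ and $\int_{S_r}H_0\,d\Sigma$, and the reorganization of the surviving terms into exactly the divergence combination $g_{ij,i}-g_{ii,j}$ that defines the ADM mass, must be tracked carefully. The second substantial ingredient is the control of the isometric embedding $\widehat S_r$: since $H_0$ is only a global invariant of $\gamma$, it is the combination of the Minkowski formula (reducing $\int_{S_r}H_0\,d\Sigma$ to the intrinsic $K$ and the support function $u$) with Nirenberg's a priori estimates (pinning down $u$, and with it $V_0(r)$) that makes this term accessible.
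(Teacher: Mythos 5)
Your overall architecture matches the paper's (Minkowski formula plus Nirenberg's embedding estimates for $\int_{S_r}H_0\,d\Sigma$, a direct expansion for $\int_{S_r}H\,d\Sigma$), but there is a concrete gap in your treatment of $\int_{S_r}H_0\,d\Sigma$. After the first Minkowski formula you are left with $\frac{2}{r^2}\bigl(3V_0(r)-r\mathcal{A}(r)\bigr)=\frac{2}{r^2}\int_{\widehat S_r}v\,dA$, where $v=u-r$ is the correction to the support function. Nirenberg's a priori estimate gives only $v=O(r^{1-\tau})$, so this term is a priori $O(r^{1-\tau})$, which for $\tau<1$ is not $o(1)$ and in fact carries part of the mass. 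Your plan to ``expand $V_0(r)$ in terms of $\sigma|_{S_r}$'' is exactly the point that cannot be done directly: the embedding is a global nonlinear object and no formula for $\int v\,dA$ to precision $o(r^2)$ follows from the $C^2$ closeness estimate alone. The paper resolves this by invoking the \emph{second} Minkowski formula, $2\mathcal{A}(r)=\int_{S_r}H_0\,X\cdot n_0\,d\Sigma_r$, which yields a second linear relation between $\int_{S_r}H_0\,d\Sigma_r$ and $V_0(r)$; eliminating $V_0(r)$ between the two relations gives $\int_{S_r}H_0\,d\Sigma_r=4\pi r+\frac{\mathcal{A}(r)}{r}+O(r^{1-2\tau})$ with no unresolved embedding data. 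Without this (or an equivalent refinement of the support function), your expansion of $\int_{S_r}H_0\,d\Sigma$ is incomplete.

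A second, related issue is that the final ``collapse'' of the surviving linear terms to $\tfrac12\int_{S_r}(g_{ij,i}-g_{ii,j})\nu^j\,d\Sigma^0$ is asserted rather than carried out, and you yourself identify it as the genuine difficulty. The paper's mechanism here is worth noting because it sidesteps term-by-term estimates entirely: it computes $\mathcal{A}'(r)$ in two ways --- via the first variation formula, $\mathcal{A}'(r)=\int_{S_r}|\nabla r|^{-1}H\,d\Sigma_r$, and via direct differentiation of the area expansion --- and an integration by parts on $S_r$ (as in Huisken--Ilmanen) turns the radial-derivative terms into the ADM integrand plus a term that cancels against the $|\nabla r|$ correction. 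The unbounded $O(r^{1-\tau})$ linear contributions are thereby packaged into $\mathcal{A}(r)/r$ on both sides, so they cancel identically against the $\mathcal{A}(r)/r$ in the $H_0$ expansion. You would need to supply an argument of comparable precision to close your version of the proof.
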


  Consider an AF manifold $(M,g)$ with
coordinates $(x^1,x^2,x^3)$ so that $g_{ij}$ satisfies the decay
conditions in Definition \ref{defadm1}. Let $n=n^i\frac{\p}{\p
x^i}$ be the unit outward normal of $S_r$ and $n_i=g_{ij}n^j$.
Then
\begin{equation}\label{outwardnormal1}
   n^i=\frac{g^{ij}x^j}{r|\nabla r|}\ \text{and}\
   n_i=\frac{x^i}{r|\nabla r|}
\end{equation}
where $r=\lf(\sum_{i=1}^3\lf(x^i\ri)^2\ri)^\frac12$. The metric
induced on $S_r$ is $h_{ij}=g_{ij}-n_in_j$ and the second
fundamental form is $A_{ij}=h^k_ih^l_j  n_{k;l},$ where $ n_{k;l}$
is the covariant derivative of $n_k$ with respect to $g$.

\begin{lemm}\label{2nd}
With the above notations and assumptions, on $S_r$ we have the
following:
\begin{itemize}
  \item [(i)]
  $$
  A_{ij}=\frac{h_{ij}}r+O(r^{-1-\tau}), H=\frac 2
  r+O(r^{-1-\tau}), K=\frac{1}{r^2}+O(r^{-2-\tau}),
  $$
   where $H$ is the mean curvature and  $K$ is the  Gauss
   curvature   of $S_r$.
  \item[(ii)]
  $$
  d\Sigma_r=\lf(1+h^{ij}\sigma_{ij}+O(r^{-2\tau})\ri)^\frac12d\Sigma_r^0.
  $$
  Hence
  $$
  \mathcal{A}(r)=4\pi r^2+\frac12\int_{S_r}h^{ij}\sigma_{ij}
  d\Sigma_r+O(r^{2-2\tau}),
  $$
where $\mathcal{A}(r)$ is the area of $S_r$ with respect to $g$.
\end{itemize}
\end{lemm}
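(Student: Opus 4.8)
The plan is to compute everything directly from the explicit formula $n_i = x^i/(r|\nabla r|)$ together with the decay hypotheses in Definition \ref{defaf}. First I would record the elementary consequences of \eqref{daf1}--\eqref{daf2}: since $g^{ij} = \delta^{ij} - \sigma^{ij} + O(r^{-2\tau})$ (indices on $\sigma$ raised with $\delta$), one has $|\nabla r|^2 = g^{ij}\partial_i r\,\partial_j r = 1 + O(r^{-\tau})$, and more precisely $|\nabla r| = 1 - \tfrac12 \sigma_{ij}\tfrac{x^i x^j}{r^2} + O(r^{-2\tau})$, with $\partial|\nabla r| = O(r^{-1-\tau})$. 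Then $n_i = x^i/r + O(r^{-\tau})$ and $n^i = x^i/r + O(r^{-\tau})$ on $S_r$, and $n_{i;l} = \partial_l n_i - \Gamma^k_{il} n_k$, where the Christoffel symbols satisfy $\Gamma = O(r^{-1-\tau})$ by \eqref{daf2}. The derivative $\partial_l(x^i/(r|\nabla r|))$ contributes the Euclidean piece $(\delta_{il} - x^i x_l/r^2)/r$ plus terms of size $O(r^{-1-\tau})$ coming from differentiating $|\nabla r|$.

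Next I would assemble $A_{ij} = h_i^k h_j^l n_{k;l}$. The leading Euclidean term of $n_{k;l}$ projected tangentially is $(\delta_{kl} - x^k x_l/r^2)/r = h^0_{kl}/r$, and on $S_r$ we have $h_{ij} = g_{ij} - n_i n_j = h^0_{ij} + \sigma_{ij} + O(r^{-2\tau})$, with $h^0_{ij} = \delta_{ij} - x^i x^j/r^2$ the Euclidean induced metric; in particular $h^0_{ij} = h_{ij} + O(r^{-\tau})$, so replacing $h^0$ by $h$ in the leading term only costs $O(r^{-1-\tau})$. Hence $A_{ij} = h_{ij}/r + O(r^{-1-\tau})$. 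Taking the trace with $h^{ij} = (h^0)^{ij} + O(r^{-\tau})$ and using that $h^0$ is $2$-dimensional gives $H = h^{ij}A_{ij} = 2/r + O(r^{-1-\tau})$. For the Gauss curvature I would use the Gauss equation $K = \tfrac12(H^2 - |A|^2) + \tfrac12 R^{S_r}_{\text{amb}}$, where the ambient sectional-curvature correction term is $O(r^{-2-\tau})$ since the full curvature tensor of $g$ is $O(r^{-2-\tau})$ by \eqref{daf2}; and $\tfrac12(H^2 - |A|^2) = \tfrac12\bigl((2/r)^2 - 2/r^2\bigr) + O(r^{-2-\tau}) = 1/r^2 + O(r^{-2-\tau})$. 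This gives (i).

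For (ii), the area element of $S_r$ in the metric $g$ is $d\Sigma_r = \sqrt{\det(h_{ab})}$ relative to angular coordinates, and comparing with the Euclidean induced metric $h^0$ on the same sphere, $d\Sigma_r = \bigl(\det((h^0)^{ac} h_{cb})\bigr)^{1/2} d\Sigma_r^0$. Writing $h_{cb} = h^0_{cb} + \sigma_{cb} + O(r^{-2\tau})$ and expanding the determinant, the first-order term is the trace $(h^0)^{cb}\sigma_{cb}$, which equals $h^{ij}\sigma_{ij} + O(r^{-2\tau})$ since $h$ and $h^0$ differ at order $r^{-\tau}$ and $\sigma = O(r^{-\tau})$; the error from the $\det$ expansion is quadratic in $\sigma$, hence $O(r^{-2\tau})$. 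This yields $d\Sigma_r = (1 + h^{ij}\sigma_{ij} + O(r^{-2\tau}))^{1/2} d\Sigma_r^0$. Integrating over $S_r$, using $\mathcal{A}^0(r) = 4\pi r^2$ and $(1+u)^{1/2} = 1 + \tfrac12 u + O(u^2)$ with $u = O(r^{-\tau})$ (so the remainder integrates to $O(r^{2-2\tau})$), gives the stated formula for $\mathcal{A}(r)$.

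The only genuinely delicate point is bookkeeping: one must be careful that every place where a Euclidean quantity ($h^0$, $\delta$, the round connection) is swapped for its $g$-counterpart the discrepancy is consistently $O(r^{-\tau})$ relative to the term it multiplies, and that the second-derivative decay $r^2|\partial\partial\sigma| = O(r^{-\tau})$ is exactly what makes the curvature and $\Gamma$-derivative terms fall into the claimed $O(r^{-2-\tau})$ and $O(r^{-1-\tau})$ buckets — no third derivatives of $\sigma$ are needed here. So the main obstacle is not conceptual but making the error-term accounting airtight; everything else is a direct expansion from the definitions.
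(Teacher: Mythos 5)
Your proposal is correct and follows essentially the same route as the paper: expand $n_i=x^i/(r|\nabla r|)$ and the Christoffel symbols to get $A_{ij}=h_{ij}/r+O(r^{-1-\tau})$, deduce $H$ by tracing and $K$ from the Gauss equation using the $O(r^{-2-\tau})$ decay of the ambient curvature, and obtain the area element by comparing the induced metric with the Euclidean one on tangent frames of $S_r$. The only cosmetic difference is that the paper uses Euclidean orthonormal frames for (ii) while you expand a determinant ratio, and the paper streamlines the projection in $A_{ij}=h_i^kh_j^l n_{k;l}$ by using that $n$ has unit length; these are equivalent.
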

\begin{proof} (i)  is well-known, see
\cite{HI}. For the sake of completeness, we derive it as follows:
\begin{equation}\label{2nd-eq0}
   |\nabla r|^2=1-\frac{\sigma_{ij}x^ix^j}{r^2}+O(r^{-2\tau})
   \end{equation}
   and
\begin{equation}\label{2nd-eq1}
   \begin{split}
     \frac{\p}{\p x^k}\lf(|\nabla r|^2\ri) & =
     \frac{\p}{\p x^k}\lf(g^{ij}\frac{x^ix^j}{r^2}\ri) \\
       & =\frac{\p}{\p
       x^k}\lf[1+\lf(g^{ij}-\delta_{ij}\ri)\frac{x^ix^j}{r^2}\ri]\\
       &=O(r^{-1-\tau}).
   \end{split}
\end{equation}
So
\begin{equation}\label{outwardnormal2}
   n^i=\frac{x^i}r+O(r^{-\tau})
\end{equation}
and
\begin{equation}\label{cdofniwetl}
\begin{split}
  n_{i;j} & =\frac{\p n_i}{\p x^j}-\Gamma_{ij}^k n_k\\
         &=\frac{\p }{\p x^j}\lf(\frac{x^i}{r|\nabla
         r|}\ri)+O(r^{-1-\tau})\\
         &=\lf(\frac{\delta_{ij}}r-\frac{x^ix^j}{r^3}\ri)+O(r^{-1-\tau}).
   \end{split}
\end{equation}
where  $\Gamma_{ij}^k$ are the Christoffel symbols. Let
$h_i^j=g^{jk}h_{ki}$. Using the fact that $n$ has unit length, we
have
\begin{equation}\label{2nd-eq2}
    \begin{split}
       A_{ij}-\frac{h_{ij}}r
       &=h_i^kh^l_j n_{k;l}-\frac{h_{ij}}r\\
       &=h^l_j  n_{i;l}-\frac{h_{ij}}r\\
       & = n_{i;j}-\lf(\frac{\delta_{ij}}r-\frac{x^ix^j}{r^3}\ri)+O(r^{-1-\tau})\\
         &=O(r^{-1-\tau}).
     \end{split}
\end{equation}

From this and the fact that the curvature of $M$ decays like
$r^{-2-\tau}$, the estimates of $H$ and $K$ follows.

(ii) Let $e_1$ and $e_2$ be orthonormal frames on $S_r$ with
respect to the Euclidean metric, then
\begin{equation}\label{area-eq1}
    \begin{split}
      d\Sigma_r & =
      \lf(g(e_1,e_1)g(e_2,e_2)-g^2(e_1,e_2)\ri)^\frac12d\Sigma_r^0\\
        &
  =\lf(1+\sigma(e_1,e_1)+\sigma(e_2,e_2)+O(r^{-2\tau})\ri)^\frac12d\Sigma_r^0\\
        &=\lf[1+\lf(e_1(x^i)e_1(x^j)+e_2(x^i)e_2(x^j)\ri)
        \sigma_{ij} +O(r^{-2\tau})\ri]^\frac12d\Sigma_r^0\\
        &=\lf[1+\lf(\nabla_0x^i\cdot \nabla_0x^j-\frac{\p x^i}{\p
 r}\frac{\p x^j}{\p
        r}\ri)\sigma_{ij} +O(r^{-2\tau})\ri]^\frac12d\Sigma_r^0\\
        &=\lf[1+\lf(\delta_{ij}-\frac{\p x^i}{\p r}\frac{\p x^j}{\p
        r}\ri)\sigma_{ij} +O(r^{-2\tau})\ri]^\frac12d\Sigma_r^0\\
        &=\lf(1+h^{ij}\sigma_{ij}+O(r^{-2\tau})\ri)^\frac12d\Sigma_r^0
    \end{split}
\end{equation}
where $\nabla_0$ is the derivative with respect to the Euclidean
metric and `$\cdot$' is the standard inner product in $\R^3$. The
last statement follows from this immediately.
\end{proof}

\begin{lemm}\label{meancurvint}
  \begin{equation}\label{meancurvint-e1}
   \int_{S_r}Hd\Sigma_r=\frac {
  \mathcal{A}(r)}r+4\pi r-8\pi m_{ADM}(M)+o(1)
  \end{equation}
as $r\to\infty$.
\end{lemm}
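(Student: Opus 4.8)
The plan is to compute $\int_{S_r} H\, d\Sigma_r$ directly in the admissible coordinates, expanding everything to the order needed to see the ADM mass. The starting point is the formula $H = g^{ij} n_{i;j} - n^i n^j n_{i;j}$ together with the expressions $n_i = x^i/(r|\nabla r|)$ and the covariant-derivative computation already carried out in \eqref{cdofniwetl}. First I would write $H$ as a Euclidean "background" part plus corrections: from $n_{i;j} = \p_j n_i - \Gamma_{ij}^k n_k$ one gets $H = \operatorname{div}_0\!\big(x^i/(r|\nabla r|)\big)$-type terms minus $g^{ij}\Gamma_{ij}^k n_k$, and the Euclidean divergence of $x^i/r$ is exactly $2/r$. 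The essential point is to keep the $O(r^{-1-\tau})$ correction terms to $H$ explicitly rather than discarding them, since after integration against $d\Sigma_r \sim r^2 d\Sigma^0$ these contribute at order $r^{1-\tau}$ and, for the ADM-mass-carrying pieces, yield an $O(1)$ limit.

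Next I would organize the integrand so the ADM mass appears. Using $d\Sigma_r = (1 + h^{ij}\sigma_{ij})^{1/2} d\Sigma_r^0$ from Lemma \ref{2nd}(ii), expand $H\, d\Sigma_r$ as a sum of: (a) the leading term $\tfrac2r\, d\Sigma_r$, which integrates to $\tfrac2r \mathcal{A}(r)$ — but I actually want $\tfrac1r\mathcal{A}(r) + 4\pi r$, so I would instead split $\tfrac2r\mathcal{A}(r) = \tfrac1r\mathcal{A}(r) + \tfrac1r\mathcal{A}(r)$ and use $\mathcal{A}(r) = 4\pi r^2 + \tfrac12\int_{S_r} h^{ij}\sigma_{ij}\,d\Sigma_r + O(r^{2-2\tau})$ to rewrite one copy; and (b) the correction terms, which after collecting should combine with the $\tfrac12\int h^{ij}\sigma_{ij}$ piece to produce precisely $-8\pi m_{ADM}(M)$. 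Concretely, I expect that after integrating by parts on $S_r$ (to move derivatives off $\sigma_{ij}$) and using $\Gamma_{ij}^k = \tfrac12(\p_i\sigma_{jk} + \p_j\sigma_{ik} - \p_k\sigma_{ij}) + O(r^{-1-2\tau})$, the surviving first-order terms assemble into $\tfrac1{16\pi}\int_{S_r}(g_{ij,i} - g_{ii,j})\nu^j\, d\Sigma_r^0$ up to an overall constant, which is the ADM integrand of \eqref{defadm1}; the remaining quadratic-in-$\sigma$ terms are $O(r^{-2\tau}) \cdot r^2 = O(r^{2-2\tau}) = o(1)$ since $\tau > \tfrac12$.

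The main obstacle will be the bookkeeping in step two: correctly matching the several first-order terms — those coming from $g^{ij}\Gamma_{ij}^k n_k$, from expanding $|\nabla r|^{-1}$ via \eqref{2nd-eq0}, from $d\Sigma_r$ vs $d\Sigma_r^0$, and from $\mathcal{A}(r)$'s own $\sigma$-correction — so that they combine into exactly the ADM surface integral with the right coefficient, with all genuinely lower-order debris provably $o(1)$. I would handle this by fixing a point of $S_r$, choosing coordinates adapted to the radial direction (so $\p x^i/\p r = x^i/r$ and $h^{ij}$ projects orthogonal to $x$), and carefully tracking which index contractions are tangential. One subtlety worth flagging: the decay hypothesis only gives $\tau > \tfrac12$, so terms of size $r^{-2\tau}$ on a surface of area $\sim r^2$ are merely $o(1)$, not $O(r^{1-2\tau}) \to 0$ with a rate — hence the statement is phrased with $o(1)$ and no explicit rate, and I would not attempt to sharpen it. Once the ADM integrand is identified on $S_r$, passing from $d\Sigma_r^0$ on the coordinate sphere to the limit in \eqref{defadm1} is immediate because the difference between integrating the ADM integrand (which is $O(r^{-1-\tau})$ pointwise, or $O(r^{-2})$ for the mass-carrying part after using the scalar curvature in $L^1$) over $S_r$ versus its limit is again $o(1)$.
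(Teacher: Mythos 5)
Your route is genuinely different from the paper's. The paper never expands $H$ to first order in $\sigma$ at all: it computes $\frac{d}{dr}\mathcal{A}(r)$ in two ways --- once by the first variation formula, which gives $\int_{S_r}|\nabla r|^{-1}H\,d\Sigma_r$, and once by differentiating the expansion $\mathcal{A}(r)=4\pi r^2+\frac12\int_{S_r}h^{ij}\sigma_{ij}\,d\Sigma_r+O(r^{2-2\tau})$ of Lemma \ref{2nd}(ii). The radial-derivative term $\int_{S_r}\sigma_{ij,k}x^ix^jx^kr^{-3}\,d\Sigma_r^0$ is then converted by the surface integration by parts of \cite[(5.17)]{HI} into tangential pieces plus exactly the ADM integrand, and equating the two expressions for $\mathcal{A}'(r)$ cancels the remaining $\int_{S_r}\sigma_{ij}x^ix^jr^{-3}\,d\Sigma_r$ term. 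Your plan arrives at the same integration-by-parts identity but gets there by expanding $H=h^{ij}A_{ij}$ pointwise, which costs you an explicit first-order formula for $H$ (the trace of the Christoffel correction, the expansion of $|\nabla r|^{-1}$, and the projection $h^{ij}$) that the paper's device deliberately avoids. Both are legitimate; the paper's version is less computation, yours is more direct in that it never passes through $\mathcal{A}'(r)$.

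There is, however, one concrete error you must repair: the claim that the quadratic-in-$\sigma$ remainders are ``$O(r^{-2\tau})\cdot r^2=O(r^{2-2\tau})=o(1)$''. For the allowed range $\frac12<\tau\le 1$ one has $2-2\tau\ge 0$, so $O(r^{2-2\tau})$ is \emph{not} $o(1)$; if the quadratic corrections to $H$ really were only $O(r^{-2\tau})$ pointwise, the whole argument would fail. What saves the computation is that every term of $H$ carries an overall factor of $r^{-1}$ (it is the mean curvature of a sphere of radius $r$), so the quadratic remainder of $H$ is in fact $O(r^{-1-2\tau})$; likewise the cross terms between the first-order correction to $H$ (size $O(r^{-1-\tau})$) and the first-order correction to $d\Sigma_r$ (size $O(r^{-\tau})$) are $O(r^{-1-2\tau})$. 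After integration over $S_r$ these contribute $O(r^{1-2\tau})=o(1)$, which is precisely the error the paper carries in \eqref{meancurvint-e2}--\eqref{meancurvint-e3}. You need to verify this extra power of $r^{-1}$ term by term, because at $\tau$ just above $\frac12$ there is no room to spare in the exponent.
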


\begin{proof}
Let $m=m_{ADM}(M)$. By Lemma \ref{2nd} and the first variational
formula, we have
\begin{equation}\label{meancurvint-e2}
    \begin{split}\frac{d}{dr}\mathcal{A}(r)&=
    \int_{S_r}\frac1{|\nabla r|}Hd\Sigma_r\\
    &=\int_{S_r}Hd\Sigma_r+
    \int_{S_r}\frac{\sigma_{ij}x^ix^j}{r^3}d\Sigma_r+O(r^{1-2\tau})
    \end{split}
\end{equation}
where we have used \eqref{2nd-eq0}.

 On the other hand, by Lemma \ref{2nd}, we have

\begin{equation}\label{meancurvint-e3}
    \begin{split}\frac{d}{dr}\mathcal{A}(r)&=
    8\pi r+\frac12\int_{S_r}\frac{\p }{\p r}\lf(h^{ij}\sigma_{ij}\ri)d\Sigma_r
    +\frac1r\int_{S_r}h^{ij}\sigma_{ij}d\Sigma_r+O(r^{1-2\tau})\\
    &=8\pi
    r+\frac12\int_{S_r}h^{ij}\sigma_{ij,k}\frac{x^k}rd\Sigma_r+
    \frac1r\int_{S_r}h^{ij}\sigma_{ij}d\Sigma_r+O(r^{1-2\tau})\\
    &=8\pi r+\frac12
    \int_{S_r} \frac{\sigma_{ii,k}x^k}rd\Sigma_r^0-\frac12
    \int_{S_r} \frac{\sigma_{ij,k}x^ix^jx^k}{r^3}d\Sigma_r^0+
    \frac1r\int_{S_r}h^{ij}\sigma_{ij}d\Sigma_r+O(r^{1-2\tau}),
    \end{split}
\end{equation}
where $\sigma_{ij,k}=\frac{\p \sigma_{ij}}{\p x^k}$.  Now, as in
\cite[(5.17)]{HI}:
\begin{equation}\label{meancurvint-e4}
   \begin{split}
 \int_{S_r}
 &\frac{\sigma_{ij,k}x^ix^jx^k}{r^3}d\Sigma_r^0\\
 &=\int_{S_r}\frac{\p}{\p
 x^k}\lf(\frac{\sigma_{ij}x^j}r\ri)\frac{x^ix^k}{r^2}d\Sigma_r^0\\
 &=-\int_{S_r} \lf(\delta_{ik}-\frac{x^ix^k}{r^2}\ri)\frac{\p}{\p
 x^k}\lf(\frac{\sigma_{ij}x^j}r\ri)d\Sigma_r^0+
 \int_{S_r}  \frac{\p}{\p
 x^i}\lf(\frac{\sigma_{ij}x^j}r\ri)d\Sigma_r^0\\
 &=-2\int_{S_r}\frac{\sigma_{ij}x^ix^j}{r^3}d\Sigma_r^0+\int_{S_r}\frac{\sigma_{ij,i}x^j}rd\Sigma_r^0+\int_{S_r}\sigma_{ij}\lf(\frac{\delta_{ij}}r-\frac{x^ix^j}{r^3}\ri)d\Sigma_r^0\\
 &=-2\int_{S_r}\frac{\sigma_{ij}x^ix^j}{r^3}d\Sigma_r+\int_{S_r}\frac{\sigma_{ij,i}x^j}rd\Sigma_r^0+\frac1r\int_{S_r}h^{ij}\sigma_{ij}d\Sigma_r+O(r^{1-2\tau}).
\end{split}
\end{equation}
Combining this with \eqref{meancurvint-e3},  by Lemma \ref{2nd}
and the definition of ADM mass, we have:
\begin{equation}\label{meancurvint-e5}
\frac{d}{dr}\mathcal{A}(r)=\frac{\mathcal{A}(r)}r+4\pi r+
\int_{S_r}\frac{\sigma_{ij}x^ix^j}{r^3}d\Sigma_r^0-8\pi m+o(1).
\end{equation}

By \eqref{meancurvint-e2} and  \eqref{meancurvint-e5}, the
lemma follows.
\end{proof}

By Lemma \ref{2nd}, if $r$ is large enough, then the Gauss curvature
of $S_r$ is positive. So  $S_r$ can be isometrically embedded in
$\R^3$ uniquely up to an isometry of $\R^3$ by
\cite{Niren,Herglotz43,Sacksteder62,PAV}. The following lemma says
that the embedded surface (rescaled) is very close to the standard
sphere as $r\to\infty$.

\begin{lemm} \label{isomr1}
  Let $(M,g)$ be an AF three manifold with \eqref{daf1} and
\eqref{daf2} for $\tau>\frac12$, and  let $S_r$ be coordinate
spheres. For $r$ large enough, there is an isometrical embedding
$X_r$ of $S_r$ in $\R^3$
 such that:
\begin{equation} \label{lisomreq1}
\begin{split}
X_r\cdot  {n_0} &=r+O\lf(r^{1-\tau}\ri) \\
H_0&= \frac{2}{r}+H_1 \text{ with } H_1=O\lf(r^{-1-\tau}\ri)
\end{split}
\end{equation}
as  $r\to +\infty$, where $ {n_0}$ is the   unit outward normal to
the surface $X_r$, `$\cdot$' is the inner product in $\R^3$, and
$H_0$ is the mean curvature of $X_r.$
\end{lemm}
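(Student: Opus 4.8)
The plan is to rescale by $1/r$ so that $S_r$ becomes a fixed round $S^2$ carrying a metric close to the standard one, and then to combine the solution of the Weyl embedding problem with its quantitative dependence on the data.

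Identify $S_r$ with $S^2$ by $r\omega\mapsto\omega$ and set $\hat h=r^{-2}h$, where $h=g|_{S_r}$ is the induced metric. Writing points of $S_r$ as $r\omega$, the components of $\sigma_{ij}$ and its derivatives up to third order, expressed in angular coordinates on $S^2$, pick up compensating powers of $r$, so Definition \ref{defaf} gives $\|\hat h-g_{S^2}\|_{C^{2,\alpha}(S^2)}=O(r^{-\tau})$, where $g_{S^2}$ denotes the round metric. In particular, by Lemma \ref{2nd}(i) the Gauss curvature of $\hat h$ is $1+O(r^{-\tau})$, hence positive for $r$ large, so by \cite{Niren} there is an isometric embedding $\hat X_r\colon (S^2,\hat h)\to\R^3$ onto a convex surface, unique up to a rigid motion by \cite{Herglotz43,Sacksteder62,PAV}. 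Set $X_r=r\hat X_r$; then $X_r$ is an isometric embedding of $(S_r,h)$ into $\R^3$, its outward unit normal $n_0$ equals that of $\hat X_r$, and its mean curvature is $r^{-1}$ times that of $\hat X_r$.

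The key step is the quantitative closeness of $\hat X_r$ to the standard embedding $\iota\colon S^2\hookrightarrow\R^3$. The round metric is sent to the unit sphere, and by the classical infinitesimal rigidity of the sphere the linearization of the Weyl problem at $g_{S^2}$ is an isomorphism modulo the action of the isometry group of $\R^3$; together with the a priori Schauder estimates for the associated equation of Monge-Ampere type, this shows that the Weyl embedding depends smoothly on the metric near $g_{S^2}$. Hence, after composing $\hat X_r$ with an appropriate rigid motion of $\R^3$, we obtain $\|\hat X_r-\iota\|_{C^{2,\alpha}(S^2)}\le C\,\|\hat h-g_{S^2}\|_{C^{2,\alpha}(S^2)}=O(r^{-\tau})$; alternatively one may quote the perturbative estimates of \cite{Niren} directly. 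I expect this step to be the main obstacle, since it requires pinning down the precise norms in the dependence of the embedding on the metric — and it is here that the decay of the third derivatives of $\sigma_{ij}$ in Definition \ref{defaf} enters, ensuring that the rescaled metrics $\hat h$ are bounded in $C^{2,\alpha}$ uniformly in $r$, so that the Schauder theory for the Weyl problem applies uniformly. (A compactness-plus-uniqueness argument avoids these estimates but only yields $X_r\cdot n_0=r+o(r)$ and $H_0=\tfrac2r+o(r^{-1})$.)

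Granting $\|\hat X_r-\iota\|_{C^2(S^2)}=O(r^{-\tau})$, the two assertions follow by scaling back. The outward unit normal of $\hat X_r$ is $\hat n_0=\nu_{S^2}+O(r^{-\tau})$, where $\nu_{S^2}$ is the outward normal of $\iota$, so $\hat X_r\cdot\hat n_0=1+O(r^{-\tau})$ and therefore $X_r\cdot n_0=r\,\hat X_r\cdot\hat n_0=r+O(r^{1-\tau})$. Likewise the mean curvature of $\hat X_r$, being a second order expression in $\hat X_r$, equals $2+O(r^{-\tau})$, whence $H_0=r^{-1}\bigl(2+O(r^{-\tau})\bigr)=\tfrac2r+H_1$ with $H_1=O(r^{-1-\tau})$, as claimed.
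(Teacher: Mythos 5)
Your proposal is correct and follows essentially the same route as the paper: rescale $S_r$ by $1/r$ to a metric on $\mathbb{S}^2$ that is $O(r^{-\tau})$-close to the round metric (using the decay of $\sigma_{ij}$ through third derivatives), invoke Nirenberg's perturbative isometric embedding theorem to get an embedding $\hat X_r$ within $O(r^{-\tau})$ of the standard one in $C^2$, and scale back by $r$ to read off the support function and mean curvature expansions. The only cosmetic difference is that the paper directly cites the quantitative estimate from \cite[p.353]{Niren} (measuring metric closeness in a third-order norm) rather than re-deriving it from infinitesimal rigidity and Schauder theory as you sketch.
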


\begin{proof}
For $r>0$, define a map $x=ry$ and pull back the metric to the $y$
space. Let the pull back metric be $\hat g$. Let $\hat h$ be the
induced metric on the coordinate spheres in $y$.
\begin{equation}\label{embed1}
   \begin{split}
     \hh_{ij} &=\hg_{ij}-\hn_i\hn_j\\
       & =r^2 g_{ij}-\hn_i\hn_j
   \end{split}
\end{equation}
where $\hh_{ij}=\hh(\frac{\p}{\p y^i},\frac{\p}{\p y^j})$, etc.
and $g_{ij}=g(\frac{\p}{\p x^i},\frac{\p}{\p x^j})$ etc. Also
$\hn_i=y^i/(\rho|\hnabla \rho|_{\hg})$ is the unit normal on
$\lf\{\rho=\lf(\sum_{i=1}^3\lf(y^i\ri)^2\ri)^\frac12=\textrm{
constant}\ri\}$. Then
$$|\hnabla
\rho|^2_{\hg}=r^{-2}g^{ij}\frac{y^iy^j}{\rho^2}.$$  Consider the
following metric on $\Sigma_\rho=\{y|\ |y|=\rho\}$:
\begin{equation}\label{embed1-2}
   \begin{split}
     ds_r^2&=r^{-2}\hh_{ij}\\
      & =g_{ij}-r^{-2}\hn_i\hn_j\\
     & = g_{ij}-\frac{y^iy^j}{g^{kl}y^ky^l}.
   \end{split}
\end{equation}
Clearly, the standard metric $h^0_{ij}$ on $\Sigma_\rho$, is
\begin{equation}
ds_0^2=h^0_{ij}=\delta_{ij}-\frac{y^iy^j}{\rho^2}.
\end{equation}
Direct computations show
\begin{equation}\label{embed4}
   ||ds_r^2-ds^2_0||_{3}=O\lf(r^{-\tau}\ri)
\end{equation}
for $\frac12\le \rho\le 2$.  Note that $\Sigma_1$ is the unit
sphere. By  \cite[p.353]{Niren}, we can find an isometric embedding
$\hat X_r$ of $(\mathbb{S}^2,ds_r^2)$ into $\R^3$ such that
\begin{equation} \label{embed4-2}
\|\hat X_r-X_0\|_{2}=O\lf(r^{-\tau}\ri)
\end{equation}
 where $X_0$ is the identity map. Since $X_0\cdot n_0=1$
 where $n_0$ is the unit outward normal of the unit sphere,
 we have $\hat X_r\cdot n_{0,r}=1+O(r^{-\tau})$, where
 $n_{0,r}$ is the unit outward normal of the surface $\hat
 X_r$. If we identify  $S_r$ with metric induced by $g$ with
 $(\mathbb{S}^2,\hh)$, then
$X_r=r\hat X_r$ is an isometric embedding of $S_r$ with metric
induced by $g$. From this  it is easy to see that the first part
of \eqref{lisomreq1} is true.

By \eqref{embed4-2}, we know that $\hat{H}_0-2=O\lf(r^{-\tau}\ri)$,
where $\hat{H}_0$ is the mean curvature of $\hat X_r.$ After
rescaling $r\hat X_r$, we can get the second part of
\eqref{lisomreq1}.
\end{proof}

\begin{lemm} \label{upperbound}
Let $(M,g)$ be an AF manifold with the properties (\ref{daf1}) and
(\ref{daf2}), and let $S_r$ be coordinate spheres.  We have
\begin{equation} \label{LEMMBYIH0}
\int_{S_r} H_0d\Sigma_r=4\pi
r+\frac{\mathcal{A}(r)}{r}+O(r^{1-2\tau}).
\end{equation}
\end{lemm}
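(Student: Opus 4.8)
The goal is to estimate $\int_{S_r} H_0 \, d\Sigma_r$, where $H_0$ is the mean curvature of the isometric embedding $X_r$ of $S_r$ into $\mathbb{R}^3$ constructed in Lemma \ref{isomr1}. The natural tool is the Minkowski integral formula in $\mathbb{R}^3$: for a convex closed surface $\Sigma$ in $\mathbb{R}^3$ with position vector $X$, outward unit normal $n_0$, mean curvature $H_0$ and Gauss curvature $K_0$,
\begin{equation*}
\int_\Sigma H_0 (X\cdot n_0) \, d\Sigma = 2|\Sigma| , \qquad \int_\Sigma K_0 (X\cdot n_0)\, d\Sigma = \int_\Sigma H_0 \, d\Sigma .
\end{equation*}
The second identity is what I would exploit: it converts the awkward quantity $\int H_0 \, d\Sigma_r$ into $\int K_0 (X_r\cdot n_0)\, d\Sigma_r$. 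This is advantageous because $K_0$ is an \emph{intrinsic} quantity — it equals the Gauss curvature $K$ of $(S_r,g)$, which by Lemma \ref{2nd}(i) is $r^{-2} + O(r^{-2-\tau})$ — and $X_r \cdot n_0$ is controlled by the first line of \eqref{lisomreq1}.

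First I would invoke the Minkowski formula $\int_{S_r} K_0 (X_r\cdot n_0)\, d\Sigma_r = \int_{S_r} H_0 \, d\Sigma_r$, using that $S_r$ embeds as a convex surface for $r$ large (Lemma \ref{2nd}(i) gives positive Gauss curvature). Then I would substitute $K_0 = K = r^{-2} + O(r^{-2-\tau})$ from Lemma \ref{2nd}(i), noting $d\Sigma_r$ here is the area element of $(S_r,g)$ which is preserved by the isometric embedding. Next I would substitute $X_r \cdot n_0 = r + O(r^{1-\tau})$ from Lemma \ref{isomr1}. Multiplying out,
\begin{equation*}
\int_{S_r} H_0 \, d\Sigma_r = \int_{S_r}\left( \frac{1}{r^2} + O(r^{-2-\tau})\right)\left( r + O(r^{1-\tau})\right) d\Sigma_r = \frac{1}{r}\mathcal{A}(r) + \int_{S_r} O(r^{-1-\tau})\, d\Sigma_r,
\end{equation*}
and since $\mathcal{A}(r) = O(r^2)$ by Lemma \ref{2nd}(ii), the error integrates to $O(r^{1-\tau})$. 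To reach the sharper error $O(r^{1-2\tau})$ claimed in \eqref{LEMMBYIH0}, I would keep the second-order term from Lemma \ref{isomr1}: writing $H_0 = \tfrac{2}{r} + H_1$ with $H_1 = O(r^{-1-\tau})$ and also refining the $X_r\cdot n_0$ expansion, the leading cross terms at order $r^{1-\tau}$ must cancel against the $4\pi r$ and $\mathcal{A}(r)/r$ pieces after using the area expansion of Lemma \ref{2nd}(ii) — so the honest computation is to expand both $K_0(X_r\cdot n_0)$ and compare with $4\pi r + \mathcal{A}(r)/r$ term by term, tracking that the $O(r^{-\tau})$ discrepancies in \eqref{embed4} and \eqref{embed4-2} are what actually contribute, and these enter quadratically in the relevant combination.

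\textbf{Main obstacle.} The delicate point is obtaining the error term $O(r^{1-2\tau})$ rather than the crude $O(r^{1-\tau})$. A term-by-term bound using $K_0 = r^{-2}+O(r^{-2-\tau})$ and $X_r\cdot n_0 = r + O(r^{1-\tau})$ only gives $O(r^{1-\tau})$, which for $\tfrac12 < \tau \le 1$ is not $o(1)$ and is useless for Theorem \ref{thmlc}. The resolution must be that the first-order corrections are not independent: the $O(r^{-\tau})$ perturbation $ds_r^2 - ds_0^2$ in \eqref{embed4} determines simultaneously the corrections to $K_0$, to $X_r\cdot n_0$, and to $H_0 = \tfrac{2}{r}+H_1$, and when assembled via the Minkowski identity these first-order pieces organize into $4\pi r + \mathcal{A}(r)/r$ exactly (this is really the statement that the Minkowski formula holds identically on the round sphere and its linearization is consistent), leaving only genuinely second-order, i.e.\ $O(r^{-2\tau})$-type, remainders. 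So the hard part is to carry out this linearized bookkeeping carefully — expanding $\int K_0 (X_r\cdot n_0)\, d\Sigma_r$ to second order in the perturbation, recognizing the first-order part as the exact Minkowski identity for a nearby round sphere whose area is $\mathcal{A}(r)$ up to the stated precision — rather than the algebra itself, which is routine once the right grouping is identified.
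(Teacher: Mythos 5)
Your starting point is the right one --- the paper also begins from the Minkowski formula relating $\int H_0$ to $\int K\,(X_r\cdot n_0)$ and exploits that $K$ is intrinsic --- and you correctly diagnose the central difficulty: substituting $K=r^{-2}+O(r^{-2-\tau})$ and $X_r\cdot n_0=r+O(r^{1-\tau})$ term by term leaves an error of size $O(r^{1-\tau})$, which is useless for $\tau\le 1$. But your proposed resolution is where the gap lies. You suggest expanding $\int K\,(X_r\cdot n_0)\,d\Sigma_r$ to second order in the perturbation and recognizing the first-order part as an exact Minkowski identity for a nearby round sphere. The available input (Nirenberg's estimate, as in \eqref{embed4-2}) only gives $\|\hat X_r-X_0\|_2=O(r^{-\tau})$; it does not identify the first-order term of the embedding as a function of $\sigma_{ij}$, so there is no expansion to linearize around and the proposed bookkeeping cannot be carried out from the stated lemmas.

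The paper's actual mechanism is different and concrete. First, the dangerous term $\frac{2}{r^2}\int_{S_r}X_r\cdot n_0\,d\Sigma_r$ is not estimated at all but evaluated \emph{exactly} as $\frac{6V_0(r)}{r^2}$ by the divergence theorem, so the $O(r^{1-\tau})$ error in $X_r\cdot n_0$ only ever multiplies the already small quantity $\bar K=K-r^{-2}=O(r^{-2-\tau})$, contributing $O(r^{1-2\tau})$. Second, the \emph{other} Minkowski formula, $\int_{S_r}H_0\,X_r\cdot n_0\,d\Sigma_r=2\mathcal{A}(r)$, treated the same way with $H_0=\frac2r+H_1$, $H_1=O(r^{-1-\tau})$, yields a second expression for $\int_{S_r}H_0\,d\Sigma_r$ in which $V_0(r)$ enters as $-\frac{6V_0(r)}{r^2}$. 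Adding the two expressions eliminates the unknown $V_0(r)$ entirely and leaves $4\pi r+\frac{\mathcal{A}(r)}{r}$ plus remainders that are all products of two $O(r^{-\tau})$-size quantities, hence $O(r^{1-2\tau})$. This tandem use of both Minkowski formulas, with the enclosed Euclidean volume as the quantity that cancels, is the missing idea. A smaller but real point: with the paper's convention $H_0=\frac2r+O(r^{-1-\tau})$ (sum of principal curvatures), the correct identity is $\int_{S_r}H_0\,d\Sigma_r=2\int_{S_r}K\,(X_r\cdot n_0)\,d\Sigma_r$; the factor $2$ you dropped is precisely what produces the extra $4\pi r$ in \eqref{LEMMBYIH0}, and without it your displayed leading term $\mathcal{A}(r)/r\approx 4\pi r$ misses the true value $\approx 8\pi r$ by a full $4\pi r$.
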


\begin{proof}
By Lemma \ref{isomr1}, for $r$ large enough, we can find an
isometric embedding $X_r$ of $S_r$ in $\R^3$ such that $X_r\cdot
n_0=r+O(r^{1-\tau})$. Let $H_0$ be the mean curvature when $S_r$ is
embedded in $\R^3$. By Lemma \ref{2nd}(i),
$$\bar
K=K-\frac1{r^2}=O(r^{-2-\tau}).
$$
By one of the Minkowski integral
 formulae  \cite[Lemma 6.2.9 ]{KLBG},   we have
\begin{equation}\label{BY-ADMH01}
\begin{split}
\int_{S_r}H_0d\Sigma_r&=2\int_{S_r}KX_r\cdot n_0d\Sigma_r\\
&=2\int_{S_r}\lf(\frac1{r^2}+\bar{K}\ri)X_r\cdot n_0d\Sigma_r\\
&=\frac{2}{r^2}\int_{S_r}X_r\cdot n_0d\Sigma_r+2\int_{S_r}\bar{K}
X_r\cdot n_0d\Sigma_r\\
&=\frac{6V_0(r)}{r^2}+2\int_{S_r}\bar{K}\lf(r+O\lf(r^{1-\tau}\ri)\ri)
d\Sigma_r\\
&=\frac{6V_0(r)}{r^2}+2r\int_{S_r}\bar{K}d\Sigma_r+O(r^{1-2\tau})\\
&=\frac{6V_0(r)}{r^2}+2r\int_{S_r}\lf(K-\frac1{r^2}\ri)d\Sigma_r+O(r^{1-2\tau})\\
&=\frac{6V_0(r)}{r^2}+8\pi
r-\frac{2\mathcal{A}(r)}{r}+O(r^{1-2\tau})
\end{split}
\end{equation}
where $V_0(r)$ is the volume of the interior of the surface $X_r$
in $\R^3$. On the other hand, from Lemma \ref{isomr1},
$H_0=\frac2r+H_1$ with $H_1=O\lf(r^{-1-\tau}\ri)$. By another
Minkowski integral formula, we have
\begin{equation}\label{BY-ADMH02}
\begin{split}
2\mathcal{A}(r)&= \int_{S_r}H_0X\cdot n_0d\Sigma_r\\
&=\frac{6V_0(r)}r+\int_{S_r}H_1X\cdot n_0d\Sigma_r\\
&=\frac{6V_0(r)}r+r\int_{S_r}H_1d\Sigma_r+O\lf(r^{2-2\tau}\ri)\\
&=\frac{6V_0(r)}r-2\mathcal{A}(r)+r\int_{S_r}H_0d\Sigma_r+O\lf(r^{2-2\tau}\ri).
\end{split}
\end{equation}
So
\begin{equation}\label{BY-ADMH03}
\int_{S_r}H_0d\Sigma_r=-\frac{6V_0(r)}{r^2}+
\frac{4\mathcal{A}(r)}r +O(r^{1-2\tau}).
\end{equation}
From \eqref{BY-ADMH01} and \eqref{BY-ADMH03}, the lemma follows.
\end{proof}

\begin{proof}[Proof of Theorem \ref{thmlc}] The theorem
follows immediately from Lemmas \ref{meancurvint} and
\ref{upperbound}.
\end{proof}

In Theorem \ref{thmlc}, $S_r$ can be replaced by slightly deformed
spheres. More precisely, we have:
\begin{coro} Same assumptions as in Theorem \ref{thmlc}.
Suppose $\rho$ is a smooth function on $M$ such that
\begin{equation}\label{deformed}
   |\rho-r|+r|\p(\rho-r)|+r^2|\p\p(\rho-r)|+r^3|\p\p\p(\rho-r)|
   =O(r^{\kappa})
\end{equation}
  for
some $0<\kappa<1-\tau$. Then
$$
\lim_{\rho\to\infty}m_{BY}(\Sigma_\rho)=m_{ADM}(M)
$$
where $\Sigma_\rho$ is the level set of the smooth function
$\rho$.
\end{coro}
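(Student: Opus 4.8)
The plan is to repeat the proof of Theorem~\ref{thmlc} almost verbatim, with the coordinate spheres $S_r$ replaced by the level sets $\Sigma_c:=\{\rho=c\}$ and the parameter $r$ replaced by $c$. The only structural input is the hypothesis $0<\kappa<1-\tau$: it forces $\rho-r$ and its derivatives to be of \emph{strictly lower order} than the error terms already present in Lemmas~\ref{2nd}--\ref{upperbound}. Concretely, the $g$-unit normal of $\Sigma_c$ is $\nu=\nabla\rho/|\nabla\rho|$, which differs from the coordinate-sphere normal $x^i/r$ by $O(r^{\kappa-1})$, while $|\nabla\rho|^2=|\nabla r|^2+O(r^{\kappa-1})=1+O(r^{-\tau})$ since $\kappa-1<-\tau$; moreover $\p\p(\rho-r)=O(r^{\kappa-2})=O(r^{-1-\tau})$ and $\p\p\p(\rho-r)=O(r^{\kappa-3})=O(r^{-2-\tau})$. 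Hence the second fundamental form, mean curvature, Gauss curvature and area element of $\Sigma_c$ satisfy the estimates of Lemma~\ref{2nd} with $r\mapsto c$; in particular, for $c$ large, $\Sigma_c$ has positive Gauss curvature and is uniquely isometrically embeddable in $\R^3$, so $m_{BY}(\Sigma_c)$ is defined.

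A clean way to organize everything is to pass, near infinity, to the new coordinates $\tilde x^i=(\rho/r)\,x^i$. For $r$ large this is a diffeomorphism (Jacobian $I+O(r^{\kappa-1})$) carrying each $\Sigma_c$ onto the round coordinate sphere $\{|\tilde x|=c\}$, and it is an isometry of $g$ onto the expressed metric $\tilde g=\delta+\tilde\sigma$ with $\tilde\sigma=\sigma+O(r^{\kappa-1})$; since $\kappa-1<-\tau$, the quantities $\tilde\sigma$, $\p\tilde\sigma$, $\p\p\tilde\sigma$ decay at the same rates $O(r^{-\tau})$, $O(r^{-1-\tau})$, $O(r^{-2-\tau})$ as for $g$. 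Because the Brown--York mass depends only on the induced metric of a surface and on the ambient metric near it, $m_{BY}(\Sigma_c)=m_{BY}^{\tilde g}(\{|\tilde x|=c\})$; and because both coordinate systems are admissible for the same manifold, $m_{ADM}(\tilde g)=m_{ADM}(g)$ by \cite{BTK86}. The corollary then follows by applying the argument of Theorem~\ref{thmlc} to $(\tilde M,\tilde g)$ and its coordinate spheres; in particular Lemma~\ref{meancurvint}, run in the $\tilde x$-coordinates, automatically absorbs the $|\nabla\rho|^{-1}$- and moving-surface corrections that one would otherwise have to track by hand when working with $\Sigma_c$ directly.

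The step I expect to require genuine care is the analogue of Lemma~\ref{isomr1}: one must show that $\Sigma_c$, rescaled by $1/c$, is close to the round unit sphere in the $C^{3}$-type sense used on p.~353 of \cite{Niren}, so as to deduce that the support function of the embedded surface is $r+O(r^{1-\tau})$ and its mean curvature is $2/r+O(r^{-1-\tau})$. Writing the rescaled induced metric on $\Sigma_c$ as a graph $u$ over $\mathbb{S}^2$, it contains a quadratic term $du\otimes du$ of size $O(c^{2(\kappa-1)})$ whose top tangential derivative formally costs one derivative of $u$, hence of $\rho$, beyond what \eqref{deformed} supplies; the resolution is that this term is negligible relative to the $O(c^{-\tau})$ contribution of $\sigma$, so that the bound \eqref{deformed} on derivatives up to third order controls the induced metric to exactly the accuracy the embedding estimate needs, and the analogue of \eqref{embed4-2} holds with rate $O(r^{-\tau})$. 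Once this is in place, the analogue of Lemma~\ref{upperbound} follows from the two Minkowski integral formulae exactly as written there, and combining it with the analogue of Lemma~\ref{meancurvint} yields $\lim_{c\to\infty}m_{BY}(\Sigma_c)=m_{ADM}(M)$. (Alternatively one can bypass the change of coordinates and simply re-run Lemmas~\ref{2nd}--\ref{upperbound} for $\Sigma_c$, checking at each step that the deformation contributes only terms dominated by the errors already present.)
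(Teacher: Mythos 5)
Your proposal is correct and its central argument --- passing to the coordinates $\tilde x^i=(\rho/r)x^i$, checking that the condition $\kappa<1-\tau$ makes this an admissible coordinate system with the same decay order, identifying $\Sigma_\rho$ with the new coordinate spheres, and invoking Theorem~\ref{thmlc} together with Bartnik's coordinate-invariance of the ADM mass --- is exactly the paper's proof. The additional discussion of re-running Lemmas~\ref{2nd}--\ref{upperbound} directly for $\Sigma_c$ is not needed once the coordinate change is in place, as you yourself observe.
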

\begin{proof} Let $y=\frac{\rho}r x=F(x)$. Then one can
show that $y$ is also a coordinates system of $M$ at infinity so
that the metric tensor in this coordinates satisfies the decay
conditions (\ref{daf1}) and (\ref{daf2}). Note that $\Sigma_\rho$
is nothing but the coordinate spheres in the $y$-coordinates.
Hence the corollary follows from the uniqueness of ADM mass by
\cite{BTK86}.
\end{proof}

With the notations as in the proof of Theorem \ref{thmlc}. Let
$V(r)$ be the volume with respect to an AF metric $g$ of the
region inside $S_r$. We can compare $V(r)$ and $V_0(r)$  (Theorem
\ref{vol-comparison-L-in}):
\begin{theo}\label{vol-comparison-L}
With the above notations. Let $(M,g)$ be an asymptotically flat
manifold of order $\tau>\frac12$ with one end.    Then
\begin{equation}\label{volcompL-e1}
    V_0(r)-V(r)=-2m_{ADM}\pi r^2+o(r^2).
\end{equation}
\end{theo}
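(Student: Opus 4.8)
The plan is to relate the volume difference $V_0(r) - V(r)$ to quantities already computed, especially the area $\mathcal{A}(r)$ and the integral $\int_{S_r} h^{ij}\sigma_{ij}\,d\Sigma_r$ that appeared in Lemma \ref{2nd}(ii), and to extract the ADM mass via the divergence-theorem manipulation of Lemma \ref{meancurvint}. First I would write both volumes as integrals. For $V(r)$, integrating in the $g$-geodesic-type radial foliation is awkward because $|\nabla r|\neq 1$; instead I would use the coarea formula with the Euclidean radial coordinate: $V(r) = \int_0^r\!\!\int_{S_t}\frac{1}{|\nabla t|}\,d\Sigma_t^0\,dt$, or more simply $\frac{d}{dr}V(r)=\int_{S_r}\frac{1}{|\nabla r|}\,d\Sigma_r^0$. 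Using \eqref{2nd-eq0}, $\frac{1}{|\nabla r|} = 1 + \frac{\sigma_{ij}x^ix^j}{2r^2}+O(r^{-2\tau})$, and comparing with $\frac{d}{dr}V_0(r)$ reduces the problem to understanding $V_0(r)$ and the flux term $\int_{S_r}\frac{\sigma_{ij}x^ix^j}{r^2}\,d\Sigma_r^0$.

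The key identity I expect to need is an expression for $V_0(r)$, the Euclidean volume enclosed by the isometric embedding $X_r$, in terms of surface integrals over $S_r$. The Minkowski-type computation already done inside the proof of Lemma \ref{upperbound} gives exactly this: from \eqref{BY-ADMH01} one reads off $\frac{6V_0(r)}{r^2} = \int_{S_r}H_0\,d\Sigma_r - 8\pi r + \frac{2\mathcal{A}(r)}{r} + O(r^{1-2\tau})$, and then Lemma \ref{upperbound} itself evaluates $\int_{S_r}H_0\,d\Sigma_r = 4\pi r + \frac{\mathcal{A}(r)}{r}+O(r^{1-2\tau})$. Combining, $V_0(r) = \frac{r^2}{6}\bigl(\frac{3\mathcal{A}(r)}{r} - 4\pi r\bigr)+o(r^2) = \frac{r\,\mathcal{A}(r)}{2} - \frac{2\pi r^3}{3}+o(r^2)$. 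So I would reduce everything to $\mathcal{A}(r)$, which by Lemma \ref{2nd}(ii) is $4\pi r^2 + \frac12\int_{S_r}h^{ij}\sigma_{ij}\,d\Sigma_r + O(r^{2-2\tau})$, giving $V_0(r) = \frac{2\pi r^3}{3} + \frac{r}{4}\int_{S_r}h^{ij}\sigma_{ij}\,d\Sigma_r + o(r^2)$ (using $2-2\tau < 1$ when $\tau>\frac12$, so the error is $o(r)$ after multiplying by $r$... I should double-check that $r\cdot O(r^{2-2\tau})=o(r^2)$, i.e. $2-2\tau<1$, which is exactly $\tau>\tfrac12$).

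Next I would compute $V(r)$ by integrating $\frac{d}{dr}V(r) = \int_{S_r}\frac{1}{|\nabla r|}\,d\Sigma_r^0 = 4\pi r^2 + \frac12\int_{S_r}\frac{\sigma_{ij}x^ix^j}{r^2}\,d\Sigma_r^0 + O(r^{2-2\tau})$, so $V(r) = \frac{4\pi r^3}{3} + \frac12\int_0^r\!\!\int_{S_t}\frac{\sigma_{ij}x^ix^j}{t^2}\,d\Sigma_t^0\,dt + o(r^2)$. Then $V_0(r) - V(r) = -\frac{2\pi r^3}{3} + \frac{r}{4}\int_{S_r}h^{ij}\sigma_{ij}\,d\Sigma_r - \frac12\int_0^r\!\!\int_{S_t}\frac{\sigma_{ij}x^ix^j}{t^2}\,d\Sigma_t^0\,dt + o(r^2)$ — but this still carries an explicit $r^3$ term, so I must have mis-tracked a factor somewhere; the correct bookkeeping will cancel the $\frac{4\pi r^3}{3}$ against a contribution hidden in $\mathcal{A}(r)/r$ and in $V_0(r)$, because in fact $\mathcal{A}(r) = 4\pi r^2 + O(r^{2-2\tau})$ forces $V_0(r) = \frac{2\pi r^3}{3}+o(r^2)$ directly when $2-2\tau<2$; I'll need to keep the surface integral $\int_{S_r}h^{ij}\sigma_{ij}$ only if $2\tau\le 1$, which does not happen, so actually both correction terms are $o(r^2)$ and only the divergence-theorem identity matters. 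The real mechanism is: rewrite $\int_{S_r}\frac{\sigma_{ij}x^ix^j}{r^2}\,d\Sigma_r^0$ and relate $\int_0^r$ of it to $r\int_{S_r}\frac{\sigma_{ij}x^ix^j}{r^2}$ via an integration by parts in $t$, producing a boundary term that, after applying the divergence identity \eqref{meancurvint-e4} and the definition \eqref{defadm1} of the ADM mass, yields exactly $-2m_{ADM}\pi r^2$. The main obstacle will be this last bookkeeping step: correctly handling the radial integration-by-parts and matching the surviving flux integral $\int_{S_r}(\sigma_{ij,i}x^j/r - \sigma_{ii,j}x^j/r)\,d\Sigma_r^0$, up to $d\Sigma_r$ vs. $d\Sigma_r^0$ errors of size $O(r^{1-2\tau})=o(1)$, to $16\pi m_{ADM}$, so that the $r^2$ coefficient comes out as $-2\pi m_{ADM}$ rather than with a spurious extra term. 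All the estimates needed ($O(r^{-2\tau})$ metric corrections, $O(r^{-1-\tau})$ for $\nabla(|\nabla r|^2)$) are already recorded in Lemma \ref{2nd} and its proof, so no new analytic input is required — only careful algebra.
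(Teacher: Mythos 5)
Your first half is sound and coincides with the paper's: combining \eqref{BY-ADMH01} and \eqref{BY-ADMH02} (equivalently, the two Minkowski identities behind Lemma \ref{upperbound}) does give
\begin{equation*}
V_0(r)=\tfrac12 r\mathcal{A}(r)-\tfrac{2\pi r^3}{3}+O(r^{3-2\tau})=\tfrac12 r\mathcal{A}(r)-\tfrac{2\pi r^3}{3}+o(r^2),
\end{equation*}
which is exactly \eqref{volcomp-ne4}. The proof collapses at the next step, where you assert that $\mathcal{A}(r)=4\pi r^2+O(r^{2-2\tau})$ and hence that the correction $\tfrac r4\int_{S_r}h^{ij}\sigma_{ij}\,d\Sigma_r$ is $o(r^2)$ and can be discarded. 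That decay rate is wrong: Lemma \ref{2nd}(ii) gives $\mathcal{A}(r)-4\pi r^2=\tfrac12\int_{S_r}h^{ij}\sigma_{ij}\,d\Sigma_r+O(r^{2-2\tau})$, and the surface integral is only $O(r^{2-\tau})$ since $\sigma_{ij}=O(r^{-\tau})$. Multiplied by $r$ this is $O(r^{3-\tau})$, which for $\tau\le 1$ is \emph{not} $o(r^2)$; for Schwarzschild ($\tau=1$) it is precisely of order $mr^2$, i.e.\ exactly the size of the quantity you are trying to compute. Dropping it erases the mass. The same error recurs in your treatment of $V(r)$, where $\int_0^r\mathcal{A}(t)\,dt$ cannot be replaced by $\tfrac{4\pi r^3}{3}+o(r^2)$. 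You in fact notice the symptom (a stray $r^3$ term and a missing $-8\pi m$), but the proposed repair --- an unspecified radial integration by parts feeding into \eqref{meancurvint-e4} --- is not carried out, and as sketched it still requires evaluating $\int_{S_r}h^{ij}\sigma_{ij}\,d\Sigma_r$ to precision $o(r)$, which the hypotheses do not permit.

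The missing idea is that this non-universal term must \emph{cancel}, not be estimated. The paper keeps $\tfrac12 r\mathcal{A}(r)$ symbolic in both volumes: it eliminates the flux term $\int_{S_r}\sigma_{ij}x^ix^jr^{-3}\,d\Sigma_r$ between the co-area identity \eqref{volcompL-e2} for $V'(r)$ and the expansion \eqref{volcompL-e3} for $\mathcal{A}'(r)$ (the latter is where Lemma \ref{meancurvint}, hence the ADM mass, enters), obtaining the exact differential relation $\lf(r\mathcal{A}(r)\ri)'=4\pi r^2-8\pi mr+2V'(r)+o(r)$. Integrating yields $V(r)=\tfrac12 r\mathcal{A}(r)-\tfrac{2\pi r^3}{3}+2\pi mr^2+o(r^2)$, and subtracting \eqref{volcomp-ne4} cancels the $\tfrac12 r\mathcal{A}(r)$ terms identically, leaving $V_0(r)-V(r)=-2\pi m r^2+o(r^2)$. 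Your radial integration-by-parts idea can be made to reproduce this (it is equivalent to integrating the ODE), but without that execution, and with the erroneous $O(r^{2-2\tau})$ claim in place, the argument as written does not prove the theorem.
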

\begin{proof} Let $m=m_{ADM}$. With the same notations as in the proof of Theorem
\ref{thmlc}, by \eqref{2nd-eq0} and the co-area formula we have
\begin{equation}\label{volcompL-e2}
   \begin{split}
 V'(r)&=\int_{S_r}\frac1{|\nabla r|}d\Sigma_r\\
 &=\mathcal{A}(r)+\frac12\int_{S_r}\frac{\sigma_{ij}x^ix^j}{r^2}d\Sigma_r+O(r^{2-2\tau}).
\end{split}
\end{equation}
Here and  below $'$ is the derivative with respect to  $r$. On the
other hand, by \eqref{meancurvint-e1} and \eqref{meancurvint-e2} we
have
\begin{equation}\label{volcompL-e3}
   \begin{split}
 \mathcal{A}'(r)&=\frac{\mathcal{A}(r)}r+4\pi r-8\pi m+
 \int_{S_r}\frac{\sigma_{ij} x^ix^j}{r^3} d\Sigma_r+o(1).
\end{split}
\end{equation}
Eliminating the term $\int_{S_r}\frac{\sigma_{ij} x^ix^j}{r^3}
d\Sigma_r$ from \eqref{volcompL-e2} and \eqref{volcompL-e3} we
have
\begin{equation}\label{volcompL-e4}
 \mathcal{A}'(r)=\frac{\mathcal{A}(r)}r+4\pi r-8\pi
 m+\frac 1r\lf(2V'(r)-2\mathcal{A}(r)\ri)+o(1).
\end{equation}

  Hence
$$
\lf(r\mathcal{A}(r)\ri)'=4\pi r^2-8\pi mr+2V'(r)+o(r)
$$
and
\begin{equation}\label{volcomp-ne3}
V(r)=\frac12 r\mathcal{A}(r)-\frac{2\pi r^3}3+2\pi mr^2+o(r^2).
\end{equation}
On the other hand, by \eqref{BY-ADMH01} and \eqref{BY-ADMH02}, we
have
\begin{equation}\label{volcomp-ne4}
    V_0(r)=\frac12 r\mathcal{A}(r)-\frac{2\pi
    r^3}3+O(r^{3-2\tau}).
\end{equation}
Hence
$$
V_0(r)-V(r)=-2\pi mr^2+o(r^2)
$$
because $\tau>\frac12$.

\end{proof}

Combine this with Positive  Mass Theorem, we have the following

\begin{coro}\label{colsafvc1} With above notations, let $(M,g)$ be an AF manifold of
order $\tau >\frac12$. If the scalar curvature is nonnegative, then
$$
\lim_{r\rightarrow +\infty} \frac{V(r)-V_0 (r)}{r^2}\ge 0,
$$
and equality holds if and only if $(M,g)$ is isometric to
$\mathbb{R}^3$.
\end{coro}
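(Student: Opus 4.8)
The plan is to read this off directly from Theorem \ref{vol-comparison-L} together with the Positive Mass Theorem. Rewriting \eqref{volcompL-e1} as
$$
\frac{V(r)-V_0(r)}{r^2}=2\pi m_{ADM}(M)+o(1),
$$
we see that the limit in question exists and equals $2\pi m_{ADM}(M)$. Hence the asserted inequality is equivalent to $m_{ADM}(M)\ge 0$ and the equality case to $m_{ADM}(M)=0$.

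First I would check that the hypotheses of the Positive Mass Theorem are in force: $(M,g)$ is complete and asymptotically flat of order $\tau>\frac12$ with one end, and, by the standing assumption made when the ADM mass was defined, the scalar curvature lies in $L^1(M)$. Since moreover the scalar curvature is nonnegative by hypothesis, the Positive Mass Theorem of Schoen--Yau \cite{SYL79} (see also Witten \cite{Wit81}) gives $m_{ADM}(M)\ge 0$, whence $\lim_{r\to\infty} r^{-2}(V(r)-V_0(r))\ge 0$. For the rigidity statement I would invoke the equality case of the Positive Mass Theorem: when the scalar curvature is nonnegative, $m_{ADM}(M)=0$ forces $(M,g)$ to be isometric to flat $\mathbb{R}^3$. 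Conversely, if $(M,g)$ is isometric to $\mathbb{R}^3$ then, for $r$ large, $S_r$ is a round sphere of radius $r$, its isometric embedding into $\mathbb{R}^3$ is the identity up to a rigid motion, and therefore $V_0(r)=V(r)=\frac43\pi r^3$, so the limit is $0$.

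There is essentially no new analytic difficulty here; the corollary is a formal consequence of Theorem \ref{vol-comparison-L} and known results. The only points worth verifying are that order $\tau>\frac12$ in dimension three is precisely the range covered by the Positive Mass Theorem and its rigidity statement, and that the Euclidean case gives $V_0(r)=V(r)$ exactly, which is immediate from the uniqueness of the isometric embedding used earlier in this section. (One could alternatively argue the equality case by combining $m_{ADM}(M)=0$ with Theorem \ref{thmlc} to get $m_{BY}(S_r)\to 0$ and then reasoning as in the discussion following Theorem \ref{vol-comparison-L-in}, but this is the same application of the rigidity theorem in disguise.)
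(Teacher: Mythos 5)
Your proposal is correct and follows exactly the route the paper intends: the paper derives this corollary by simply combining Theorem \ref{vol-comparison-L} with the Positive Mass Theorem and its rigidity case, which is precisely what you do (with the hypotheses and the converse direction spelled out more carefully than the paper bothers to). No issues.
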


From the proof of Theorem \ref{vol-comparison-L}, Miao \cite{Miao}
is able to obtain the following. Thanks to Pengzi Miao, we
include the result and the proof here.
\begin{coro}\label{iso-mass-cor} In an AF manifold $M$, the ADM mass and the
isoperimetric mass  introduced by Huisken \cite{Huisken} are
equal.
\end{coro}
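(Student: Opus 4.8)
The plan is to substitute the asymptotic expansions already established in the proof of Theorem \ref{vol-comparison-L} directly into Huisken's definition of $m_{ISO}$ and to verify that everything below the $r^2$ scale cancels, leaving precisely $m_{ADM}$. Write $m=m_{ADM}(M)$ and abbreviate $\mathcal{A}=\mathcal{A}(r)$. The two inputs I would use are: from Lemma \ref{2nd}(ii), $\mathcal{A}=4\pi r^2+E(r)$ with $E(r)=O(r^{2-\tau})$; and from \eqref{volcomp-ne3}, $V(r)=\frac12 r\mathcal{A}-\frac{2\pi r^3}{3}+2\pi m r^2+o(r^2)$.

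Next I would expand $\mathcal{A}^{3/2}$. Since $\tau>\frac12$, the ratio $x:=E(r)/(4\pi r^2)=O(r^{-\tau})$ tends to $0$, so the binomial expansion $(1+x)^{3/2}=1+\frac32 x+O(x^2)$ applies and gives $\mathcal{A}^{3/2}=8\pi^{3/2}r^3+3\pi^{1/2}r\,E(r)+O(r^{3-2\tau})$, hence $\frac{1}{6\pi^{1/2}}\mathcal{A}^{3/2}=\frac{4\pi r^3}{3}+\frac{r E(r)}{2}+O(r^{3-2\tau})$, where $r^{3-2\tau}=o(r^2)$ precisely because $\tau>\frac12$. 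Meanwhile $\frac12 r\mathcal{A}=2\pi r^3+\frac{r E(r)}{2}$. Subtracting, the $\frac{r E(r)}{2}$ terms cancel and $2\pi r^3-\frac{2\pi r^3}{3}-\frac{4\pi r^3}{3}=0$, so $V(r)-\frac{1}{6\pi^{1/2}}\mathcal{A}^{3/2}=2\pi m r^2+o(r^2)$. Dividing by $\frac12\mathcal{A}=2\pi r^2+o(r^2)$ shows that $\frac{2}{\mathcal{A}}\bigl(V(r)-\frac{1}{6\pi^{1/2}}\mathcal{A}^{3/2}\bigr)\to m$, so in particular the $\limsup$ defining $m_{ISO}$ equals $m_{ADM}(M)$.

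The only real point to watch — the ``main obstacle,'' such as it is — is the bookkeeping of error terms in the binomial expansion: one must confirm that the first-order correction to $\mathcal{A}^{3/2}$ is exactly $\frac32(4\pi r^2)^{1/2}E(r)$, so that after dividing by $6\pi^{1/2}$ it equals $\frac{r E(r)}{2}$, which is the very term (with opposite sign) appearing in $\frac12 r\mathcal{A}$. This exact match is what forces the unknown area-perturbation $E(r)$ to drop out, which is the heart of the statement; morally it reflects the fact that the round sphere saturates the Euclidean isoperimetric equality, so the isoperimetric deficit of the coordinate spheres is of strictly lower order than $r^2$. One also needs $\tau>\frac12$ in two places — to absorb $r^{3-2\tau}$ into $o(r^2)$, and implicitly through the $o(r^2)$ remainder in \eqref{volcomp-ne3} — but both are already part of the hypotheses and of Theorem \ref{vol-comparison-L}, so no genuinely new estimate is needed beyond those already proved.
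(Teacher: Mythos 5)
Your proposal is correct and follows essentially the same route as the paper: both substitute the volume expansion \eqref{volcomp-ne3} and the area expansion from Lemma \ref{2nd}(ii) into Huisken's definition and verify that the area-perturbation terms cancel, leaving $m_{ADM}+o(1)$. The only difference is cosmetic bookkeeping — you expand $\mathcal{A}^{3/2}$ directly via the binomial theorem, while the paper expands $\mathcal{A}^{-1}$ and $\mathcal{A}^{1/2}$ in terms of the quantity $\mathcal{I}=O(r^{-\tau})$ — and both correctly use $\tau>\tfrac12$ to absorb the quadratic error into $o(r^2)$.
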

\begin{proof} Recall that the isoperimetric mass of $M$ is defined
as
$$
m_{ISO}=\limsup_{r\to\infty}\frac2{\mathcal{A}(r)}\lf(V(r)-\frac1{6\pi^\frac12}\mathcal{A}^\frac32(r)\ri).
$$
Now by \eqref{volcomp-ne3}
\begin{equation}\label{isomass-e1}
\begin{split}
    \frac2{\mathcal{A}(r)}&\lf(V(r)-\frac1{6\pi^\frac12}
    \mathcal{A}^\frac32(r)\ri)\\
     &
     =r+\frac1{\mathcal{A}(r)}\lf(4\pi m r^2-\frac{4\pi r^3}3\ri)-\frac1{3\pi^\frac12}\mathcal{A}^\frac12(r)+o(1)\\
        &=r+\lf(m-\frac r3\ri)\lf(1-\mathcal{I}+O(r^{-2\tau})\ri)-\frac
        {2r}3\lf(1+\frac12 \mathcal{I}+O(r^{-2\tau})\ri)+o(1)\\
        &=m+o(1).
\end{split}
\end{equation}
where
$$
\mathcal{I}=\frac{1}{8\pi r^2} \int_{S_r} h^{ij}\sigma_{ij}
d\Sigma_r=O(r^{-\tau})
$$
so that
$$
\mathcal{A}(r)=4\pi r^2\lf(1+\mathcal{I}+O(r^{-2\tau})\ri),
$$
see Lemma \ref{2nd}(ii). From this the result follows.
\end{proof}

\section{small-sphere  limit}

In this section, we will first study the small-sphere limit of the
Brown-York mass of  geodesic spheres up to order $r^5$ where $r$
is the geodesic distance from a fixed point. Let $(N^3,g)$ be a
three dimensional manifold and let $p\in N$. Let $\{x^i\}$ be the
normal coordinates near $p$. By \cite[Chapter 5]{SYL94}, we have
the following expansion of $g$ near $p$:

\begin{lemm}\label{metric-expansion-1}
For any point $x$ close to $p$, the metric components of $g$ in
the normal coordinates can be expressed as
\begin{equation} \label{ssbymproofthm1}
\begin{split}
g_{ij}(x) & = \delta_{ij}+\frac{1}{3}R_{iklj}(p)x^kx^l+
\frac{1}{6}R_{iklj;m}(p)x^kx^lx^m\\
&
+\lf(\frac{1}{20}R_{iklj;mn}(p)+\frac{2}{45}R_{ikls}(p)R_{jmns}(p)\ri)x^kx^lx^mx^n+O\lf(r^5\ri)\\
\end{split}
\end{equation}
and
\begin{equation}\label{4area-s1}
 \begin{split}
    g&=\det(g_{ij})\\
    &=1-\frac13 R_{ij}(p)x^ix^j-\frac16
    R_{ij;k}(p)x^ix^jx^k\\
    &\quad-\lf(\frac1{20}R_{ij;kl}(p)+\frac1{90}R_{hijm}(p)R_{hklm}(p)
    -\frac1{18}R_{ij}(p)R_{kl}(p)\ri)x^ix^jx^kx^l+O(r^5),
\end{split}
\end{equation}
 where $r$ is the geodesic distance from $p$, $R_{ijkl}$ is the Riemannian curvature tensor, $R_{ij}$ is the Ricci curvature
 and $R$ is the scalar curvature with respect
to the metric $g$, and $R_{iklj;m}$ is the covariant derivative of
$R_{ijkl}$ etc.
\end{lemm}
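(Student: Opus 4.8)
This is the standard Taylor expansion of a Riemannian metric in geodesic normal coordinates, and I would derive both \eqref{ssbymproofthm1} and \eqref{4area-s1} from the Jacobi equation along radial geodesics. Fix the orthonormal basis $\{e_i\}$ of $T_pN$ that defines the normal coordinates $\{x^i\}$, so that $g_{ij}(0)=\delta_{ij}$ and, since the Christoffel symbols vanish at $p$, also $\partial_k g_{ij}(0)=0$. For a unit vector $v\in T_pN$ put $\gamma_v(t)=\exp_p(tv)$. The coordinate vector field $\partial/\partial x^i$ evaluated at the point with coordinates $tv$ equals $d(\exp_p)_{tv}(e_i)$, and by the classical identification of the differential of $\exp_p$ with Jacobi fields,
\begin{equation*}
g_{ij}(tv)=\frac1{t^2}\,\langle J_i(t),J_j(t)\rangle,
\end{equation*}
where $J_i$ is the Jacobi field along $\gamma_v$ with $J_i(0)=0$ and $\tfrac{D}{dt}J_i(0)=e_i$. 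Thus the entire problem reduces to Taylor expanding the fields $J_i$ in $t$.

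For \eqref{ssbymproofthm1} the plan is to expand $J_i(t)=\sum_k f_i^k(t)E_k(t)$ in the parallel orthonormal frame $\{E_k\}$ along $\gamma_v$ with $E_k(0)=e_k$. The Jacobi equation $J_i''=-R(J_i,\dot\gamma_v)\dot\gamma_v$ becomes the linear system $f_i''+\mathcal{R}(t)f_i=0$ with $\mathcal{R}(t)_{k\ell}=\langle R(E_\ell,\dot\gamma_v)\dot\gamma_v,E_k\rangle$, and since covariant differentiation along $\gamma_v$ is just $d/dt$ in the parallel frame, the Taylor coefficients $\mathcal{R}(0),\mathcal{R}'(0),\mathcal{R}''(0)$ are $R(p),\nabla R(p),\nabla^2R(p)$ contracted with copies of $v$. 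Solving the system iteratively from $f_i(0)=0$, $f_i'(0)=e_i$ gives
\begin{equation*}
f_i^k(t)=t\,\delta_{ik}-\tfrac{t^3}{6}\,\mathcal{R}(0)_{ki}-\tfrac{t^4}{12}\,\mathcal{R}'(0)_{ki}+t^5\big(-\tfrac1{40}\mathcal{R}''(0)+\tfrac1{120}\mathcal{R}(0)^2\big)_{ki}+O(t^6),
\end{equation*}
so substituting into the formula for $g_{ij}(tv)$, collecting powers of $t$ (using that $\mathcal{R}$ is symmetric), and replacing $tv$ by $x$ reproduces \eqref{ssbymproofthm1}; the term $\mathcal{R}(0)^2$ is exactly what produces the quadratic curvature contribution $R_{ikls}R_{jmns}$.

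For the determinant \eqref{4area-s1} I would write $g_{ij}=\delta_{ij}+a_{ij}$, split $a_{ij}=a^{(2)}_{ij}+a^{(3)}_{ij}+a^{(4)}_{ij}+O(r^5)$ into the homogeneous pieces read off from \eqref{ssbymproofthm1}, and use
\begin{equation*}
\det(\mathrm{Id}+a)=1+\tr(a)+\tfrac12\big((\tr(a))^2-\tr(a^2)\big)+O(|a|^3),
\end{equation*}
keeping everything through order $r^4$. Here $\tr(a^{(2)})$ and $\tr(a^{(3)})$ give the $R_{ij}$ and $R_{ij;k}$ terms, while the order-$r^4$ part equals $\tr(a^{(4)})+\tfrac12(\tr(a^{(2)}))^2-\tfrac12\tr((a^{(2)})^2)$, which contributes an $R_{ij;kl}$ term together with three separate quartic-curvature terms: one from $\tr(a^{(4)})$, one from the square $(\tr(a^{(2)}))^2$, and one from $\tr((a^{(2)})^2)$. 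Applying the pair symmetry and the first Bianchi identity for the curvature tensor and symmetrizing over the four indices contracted with $x^ix^jx^kx^l$ collapses these into the single coefficient $\tfrac1{20}R_{ij;kl}+\tfrac1{90}R_{hijm}R_{hklm}-\tfrac1{18}R_{ij}R_{kl}$ of \eqref{4area-s1}.

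All the conceptual content sits in the Jacobi-equation step, and what remains is bookkeeping — which is where care is needed. For \eqref{ssbymproofthm1} one must iterate the Jacobi equation twice to recover the $\mathcal{R}(0)^2$ term with the correct combinatorial weight, and bear in mind that only the totally symmetric part of each coefficient in the indices contracted with the $x$'s survives. For \eqref{4area-s1} one must assemble the three quartic-curvature contributions with their combinatorial factors and then invoke the Bianchi identities to bring the result to the displayed form; a sign convention for $R_{ijkl}$ must be fixed at the outset, and it is prudent to check the final coefficients against the space forms, where both $g_{ij}$ and $\det g_{ij}$ are known in closed form. This is in essence the computation of \cite[Chapter~5]{SYL94}.
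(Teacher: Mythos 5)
Your proposal is correct, and it supplies the standard derivation that the paper itself omits entirely: the paper proves nothing here and simply cites \cite[Chapter 5]{SYL94}, for which your Jacobi-field argument is precisely the underlying computation. The solution you record, $f_i^k(t)=t\,\delta_{ik}-\tfrac{t^3}{6}\mathcal{R}(0)_{ki}-\tfrac{t^4}{12}\mathcal{R}'(0)_{ki}+t^5\bigl(-\tfrac1{40}\mathcal{R}''(0)+\tfrac1{120}\mathcal{R}(0)^2\bigr)_{ki}+O(t^6)$, does reproduce the stated coefficients (for instance $\tfrac1{36}+\tfrac1{60}=\tfrac2{45}$ for the quadratic-curvature term), and the second-order determinant expansion correctly accounts for all three quartic contributions in \eqref{4area-s1}.
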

In our notations, the sectional curvature is nonnegative if
$R_{ijij}\ge 0$. In the following, we always assume that the
normal coordinates are chosen so that at $p$ the Ricci curvature
is of the form $R_{ij}=\lambda_i\delta_{ij}$ where
$\lambda_1,\lambda_2,\lambda_3$ are the eigenvalues of $R_{ij}$.

\begin{lemm} \label{expansionofarea}
Let $\mathcal{A}(r)$ be the area of geodesic sphere
$S_r=\{|x|=r\}$ with radius $r$ in $(N,g)$ with center at $p$,
then:
\begin{equation}
\mathcal{A}(r)=4\pi r^2 +A_4 + A_6 +O(r^7),
\end{equation}
where
\begin{equation}
A_4=-\frac{2\pi r^4}{9}R,\quad A_6=\frac{\pi r^6}{675}\lf (4R^2
-2|Ric|^2-9\Delta R\ri)
\end{equation}
where, $\Delta$ is the Laplacian operator with respect to metric
$g$ and $|Ric|$ is the norm of the Ricci tensor. Here all the
terms involving curvature are evaluated at $p$.
\end{lemm}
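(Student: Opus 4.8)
The plan is to compute the area of the geodesic sphere $S_r$ directly from the volume-element expansion \eqref{4area-s1} by passing to polar-type coordinates adapted to the normal coordinates. Writing $x^i = r\,\omega^i$ with $\omega = (\omega^1,\omega^2,\omega^3) \in \mathbb{S}^2$, the induced area element on $S_r$ differs from the flat one $r^2 \,d\omega$ by the square root of the appropriate Gram determinant; since in normal coordinates the radial direction is $g$-orthogonal to $S_r$ and $|\nabla r|_g = 1$, one checks that the area element is $\sqrt{g}\,/\,\sqrt{g^{rr}}$ in these coordinates, which up to the relevant order reduces to $\sqrt{\det g_{ij}(r\omega)}\cdot r^2 \,d\omega$ times a correction coming from the radial component. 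I would make this reduction precise first, so that $\mathcal{A}(r) = \int_{\mathbb{S}^2} \sqrt{\det g_{ij}(r\omega)}\,\big(1 + (\text{lower order radial correction})\big)\, r^2\, d\omega$, expanded using \eqref{4area-s1}.

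Next I would Taylor-expand $\sqrt{\det g_{ij}} = 1 - \tfrac16 R_{ij}x^ix^j - \tfrac1{12}R_{ij;k}x^ix^jx^k - (\cdots)x^ix^jx^kx^l + O(r^5)$ from \eqref{4area-s1}, substitute $x^i = r\omega^i$, and integrate term by term over $\mathbb{S}^2$. The key computational input is the standard list of moments of $\omega$ over the unit sphere: $\int_{\mathbb{S}^2}\omega^i\omega^j\,d\omega = \tfrac{4\pi}{3}\delta_{ij}$, $\int_{\mathbb{S}^2}\omega^i\omega^j\omega^k\omega^l\,d\omega = \tfrac{4\pi}{15}(\delta_{ij}\delta_{kl}+\delta_{ik}\delta_{jl}+\delta_{il}\delta_{jk})$, and the vanishing of all odd moments. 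The quadratic term produces $-\tfrac{r^4}{6}\cdot\tfrac{4\pi}{3}R_{ii} = -\tfrac{2\pi r^4}{9}R$, giving $A_4$. The odd ($x^ix^jx^k$) term integrates to zero. For $A_6$ one contracts the quartic-moment tensor against the coefficient $\tfrac1{20}R_{ij;kl} + \tfrac1{90}R_{hijm}R_{hklm} - \tfrac1{18}R_{ij}R_{kl}$, using $R_{hiim} = R_{hm}$, $R_{iijj}=0$ (antisymmetry), the second Bianchi-type contraction $R_{hijm}R_{hijm} = |Rm|^2$ together with the identity $|Rm|^2 = 4|Ric|^2 - R^2$ valid in dimension three, and $g^{kl}R_{ij;kl}$-type contractions giving $\Delta R$ (after using the contracted second Bianchi identity $R_{ij;i} = \tfrac12 R_{;j}$ to relate $R_{ij;ij}$ to $\tfrac12\Delta R$). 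Careful bookkeeping of these contractions yields the coefficient $\tfrac{\pi r^6}{675}(4R^2 - 2|Ric|^2 - 9\Delta R)$.

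The main obstacle I anticipate is twofold. First, there is a subtlety in the area element: one must correctly account for the fact that $\sqrt{g^{rr}}\neq 1$ in general normal coordinates away from $p$, or equivalently confirm that in geodesic polar coordinates $\partial/\partial r$ has unit length but the coordinate transition from Cartesian normal coordinates contributes the radial derivative of the $O(r^2)$ part of $g_{ij}$; getting the right power of $r$ attached to each curvature-derivative term is where sign and coefficient errors are most likely. Second, the dimension-three identity $|Rm|^2 = 4|Ric|^2 - R^2$ is essential for collapsing the $R_{hijm}R_{hklm}$ contraction into the stated $|Ric|^2$ and $R^2$ terms — in higher dimensions the Weyl part would survive, so one should invoke three-dimensionality explicitly. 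I would handle the first obstacle by working directly in geodesic polar coordinates from the start (so $|\nabla r| = 1$ and $d\Sigma_r$ is genuinely the metric area element of $S_r$), reducing everything to the expansion of $\sqrt{\det h_{AB}(r,\theta)}$ where $h_{AB}$ is the angular block of $g$ in polar coordinates; this block itself can be read off from \eqref{ssbymproofthm1} restricted to vectors tangent to $S_r$, and its determinant expansion is what feeds the spherical-moment integration above.
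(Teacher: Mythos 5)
Your plan is essentially the paper's proof: expand $\sqrt{\det g_{ij}}$ from \eqref{4area-s1}, integrate against the second and fourth moments of the round sphere, and collapse the curvature contractions using three-dimensionality and the contracted second Bianchi identity. Your first anticipated obstacle is not actually one: in normal coordinates the Gauss lemma gives $g_{ij}x^j=x^i$, hence $|\nabla r|_g\equiv 1$ and $d\Sigma_r=\sqrt{\det g_{ij}}\,d\Sigma_r^0$ exactly, with no radial correction; the paper packages this by writing $\mathcal{A}(r)=V'(r)$ with $V(r)=\int_{B_r}\sqrt{g}\,dv_0$. Your second point is well taken and is exactly how the paper proceeds (it uses the explicit three-dimensional formula \eqref{4curv-s1} in an eigenbasis of $R_{ij}$ rather than the identity $|Rm|^2=4|Ric|^2-R^2$, but these are interchangeable).

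There is, however, one concrete bookkeeping error that would make your $A_6$ come out wrong as written. Writing $\det g=1+b_2+b_3+b_4+O(r^5)$ with $b_2=-\tfrac13R_{ij}x^ix^j$ and $b_4=-\bigl(\tfrac1{20}R_{ij;kl}+\tfrac1{90}R_{hijm}R_{hklm}-\tfrac1{18}R_{ij}R_{kl}\bigr)x^ix^jx^kx^l$, the quartic part of $\sqrt{\det g}$ is $\tfrac12 b_4-\tfrac18 b_2^2$, not $b_4$. You propose to contract the fourth-moment tensor against the coefficient of $b_4$ itself, which drops both the factor $\tfrac12$ and the cross term $-\tfrac18 b_2^2=-\tfrac1{72}(R_{ij}x^ix^j)^2$; since $\int_{S_r}(R_{ij}x^ix^j)^2\,d\Sigma_r^0=\tfrac{4\pi}{15}r^6(R^2+2|Ric|^2)$ is of the same order and type as the other contributions, this changes the coefficients of $R^2$ and $|Ric|^2$ (and doubles the $\Delta R$ term). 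With the corrected quartic coefficient, namely $-\tfrac1{40}R_{ij;kl}-\tfrac1{180}R_{hijm}R_{hklm}+\bigl(\tfrac1{36}-\tfrac1{72}\bigr)R_{ij}R_{kl}$, your moment-tensor contraction (using $R_{hijm}R_{hjim}=\tfrac12|Rm|^2$, $|Rm|^2=4|Ric|^2-R^2$, and $R_{ij;ij}=\tfrac12\Delta R$) does reproduce $A_6=\tfrac{\pi r^6}{675}(4R^2-2|Ric|^2-9\Delta R)$, so the gap is repairable but must be repaired.
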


\begin{proof}

By \eqref{4area-s1}
$$
\sqrt g=1+\frac12(b_2+b_3+b_4)-\frac18 b_2^2+O(r^5)
$$
where

\begin{equation}\label{4area-s1-1}
\begin{split}
 b_2&=-\frac{1}{3}R_{ij}x^ix^j\\
 b_3&=-\frac16
    R_{ij,k}x^ix^jx^k\\
     b_4&=-\lf(\frac1{20}R_{ij,kl}+\frac1{90}R_{hijm}R_{hklm}-\frac1{18}R_{ij}R_{kl}\ri)x^ix^jx^kx^l.
\end{split}
\end{equation}
Hence
$$
V(r)=\int_{B_r}\sqrt g dv_0
$$
where $dv_0$ is the volume element with respect to Euclidean
metric and $B_r=\{x|\ |x|<r\}$. Since $|\frac{\p}{\p r}|=1$ in $g$
metric,
\begin{equation}\label{4area-s2}
\begin{split}
\mathcal{A}(r)&=V'(r)\\
&=4\pi r^2+\int_{S_r}\lf[\frac12(b_2+b_3+b_4)-\frac18
b_2^2+O(r^5)\ri]d\Sigma_r^0\\
&=4\pi
r^2+\frac12\int_{S_r}b_2d\Sigma_r^0+\frac12\int_{S_r}\lf(b_4-\frac14
b_2^2\ri)d\Sigma_r^0+O(r^7)
 \end{split}
 \end{equation}
 where $d\Sigma_r^0$ is the area element of $S_r$ with
 respect to the Euclidean metric.
 Since  $\int_{S_r}\lf(x^i\ri)^2d\Sigma_r^0=\frac{4}{3}\pi
 r^4$, by \eqref{4area-s1-1} and the fact that
 $R_{ij}x^ix^j=\sum_{i=1}^3\lambda_i(x^i)^2$,
\begin{equation}\label{4area-s3-1}
\frac{1}{2}\int_{S_r}b_2d\Sigma^0_r=-\frac{2\pi r^4}9R.
\end{equation}
Noting that
\begin{equation}\label{4area-s4}
\begin{split}
b_4&=-\frac1{20}R_{ij;kl}x^ix^jx^kx^l-\frac1{90}\sum_{h,m}\lf(\sum_{ij}
R_{hijm}x^ix^j\ri)^2+\frac1{18}\lf(\sum_{ij}R_{ij}x^ix^j\ri)^2\\
&=-\frac1{20}R_{ij;kl}x^ix^jx^kx^l-\frac1{90}\sum_{i,j}\lf(\sum_{k,l}
R_{iklj}x^kx^l\ri)^2+\frac12b_2^2.
\end{split}
\end{equation}
Let us first  compute
$\int_{S_r}\lf(R_{ij}x^ix^j\ri)^2d\Sigma_r^0.$ By symmetry
\begin{equation}\label{4basicint1}
\int_{S_r}\lf(x^i\ri)^4d\Sigma_r^0=\frac{4}{5}\pi r^6, \textrm{
for } i=1,2,3
\end{equation}
and for $i\neq j$,
\begin{equation}\label{4basicint2}
\int_{S_r}\lf(x^ix^j\ri)^2d\Sigma_r^0=\frac{4}{15}\pi r^6.
\end{equation}
We have
\begin{equation}\label{4Areafricci11}
\begin{split}
&\int_{S_r}\lf(R_{ij}x^ix^j\ri)^2d\Sigma_r^0\\
&=\int_{S_r}\lf(\sum_i\lambda_i(x^i)^2\ri)^2d\Sigma_r^0\\
&=\frac{4}{5}\pi r^6\sum_i\lambda_i^2+\frac{4}{15}\pi
r^6\sum_{i\neq
j}\lambda_i\lambda_j\\
&=\frac{4}{15}\pi  r^6\lf(R^2+2|Ric|^2\ri).
\end{split}
\end{equation}
Since the $\dim N=3$,  by  \cite[ p.276]{Hamilton}, at $p$
\begin{equation}\label{4curv-s1}
\begin{split}
    R_{ijkl}
    &=
    g_{ik}R_{jl}-g_{il}R_{jk}-g_{jk}R_{il}+g_{jl}R_{ik}-\frac{1}{2}R\lf(g_{ik}g_{jl}-g_{il}g_{jk}\ri)\\
    &=\delta_{ik}R_{jl}-\delta_{il}R_{jk}-\delta_{jk}R_{il}+\delta_{jl}R_{ik}
    -\frac{1}{2}R\lf(\delta_{ik}\delta_{jl}-\delta_{il}\delta_{jk}\ri)
\end{split}
\end{equation}
and hence on $S_r$:
\begin{equation}\label{4curv-s2}
\begin{split}
    R_{ijkl}x^jx^k
    &=\lf(\lambda_i+\lambda_l-\frac R2\ri)x^ix^l-\delta_{il}\lf(\sum_{k}\lambda_k
    (x^k)^2+\lf(\lambda_i-\frac R2\ri) r^2\ri).
    \end{split}
\end{equation}
Using \eqref{4basicint2}, we have
\begin{equation}\label{4curv-s3}
\begin{split}
  \int_{S_r}\sum_{i\neq l} \lf(\sum_{j,k} R_{ijkl}x^jx^k\ri)^2d\Sigma_r^0&
  =\int_{S_r}\sum_{i\neq
  l}\lf(\lambda_l+\lambda_i-\frac R2\ri)^2\lf(x^ix^l\ri)^2d\Sigma_r^0\\
  &=\frac{4}{15}\pi r^6\sum_{i\neq l}\lf(\lambda_l+\lambda_i-\frac R2\ri)^2\\
  &=\frac{4}{15}\pi r^6\lf(2|Ric|^2-\frac {R^2}2 \ri).
 \end{split}
\end{equation}
Clearly, by \eqref{4curv-s2}, \eqref{4basicint1} and
\eqref{4basicint2}, we have
\begin{equation}\label{4curv-s4-1}
\begin{split}
  & \int_{S_r} \lf(\sum_{jk}R_{1jk1}x^jx^k\ri)^2d\Sigma^0_r\\
   &= \int_{S_r}\lf(\lf(2\lambda_1-\frac R2\ri)(x^1)^2-
    \sum_{k}\lambda_k(x^k)^2-\lf(\lambda_1-\frac R2\ri)r^2\ri)^2d\Sigma^0_r\\
    &=\pi r^6\lf(-\frac{4}{15}\lambda_1^2+\frac{8}{15}R\lambda_1-\frac{4}{15}R^2+\frac{8}{15}|Ric|^2\ri). \end{split}
\end{equation}
We have similar formula for the case $i=l=2$ or 3. So
\begin{equation}\label{4curv-s5}
\begin{split}
  \int_{S_r}\sum_{i=l} \lf(\sum_{j,k} R_{ijkl}x^jx^k\ri)^2d\Sigma_r^0&
  =\pi r^6\lf(\frac{20}{15}|Ric|^2-\frac{4}{15}R^2\ri).
 \end{split}
\end{equation}
By \eqref{4curv-s3} and \eqref{4curv-s5} we have
\begin{equation}\label{4curv-s6}
   \int_{S_r}\sum_{i,l}\lf(\sum_{j,k}R_{ijkl}x^jx^k\ri)^2d\Sigma_r^0
   =\frac{1}{15}\lf(28|Ric|^2-6R^2\ri)\pi r^6.
\end{equation}

Finally, let us compute
$\int_{S_r}\sum_{i,j,k,l}R_{ij;kl}x^ix^jx^kx^ld\Sigma_r^0$. By
symmetry $\int_{S_r} R_{ij;kl}x^ix^jx^kx^ld\Sigma_r^0=0$ unless
$x^ix^jx^kx^l$ is of the form $(x^m)^4$, or $(x^m)^2(x^n)^2$ with
$m\neq n$.

\begin{equation}\label{4Areafriccdire1}
\begin{split}
&\sum_{j,k,l}\int_{S_r}R_{1j;kl}x^1x^jx^kx^ld\Sigma_r^0\\
&=\int_{S_r}R_{11;11}(x^1)^4d\Sigma_r^0+\int_{S_r}R_{11;22}(x^1x^2)^2d\Sigma_r^0
+\int_{S_r}R_{11;33}(x^1x^3)^2d\Sigma_r^0\\
&\quad
+\int_{S_r}(R_{12;12}+R_{12;21})(x^1x^2)^2d\Sigma_r^0+\int_{S_r}(R_{13;13}+R_{13;31})(x^1x^3)^2d\Sigma_r^0\\
&=\frac{4}{5}\pi r^6R_{11;11}+\frac{4}{15}\pi
r^6(R_{12;12}+R_{12;21}+R_{13;13}+R_{13;31}+R_{11;22}+R_{11;33}).
\end{split}
\end{equation}
Similarly, one can prove that
\begin{equation}\label{4Areafriccdire2}
\begin{split}
&\sum_{j,k,l}\int_{S_r}R_{2j,kl}x^2x^jx^kx^ld\Sigma_r^0\\
&=\frac{4}{5}\pi r^6R_{22;22}+\frac{4}{15}\pi
r^6(R_{21;21}+R_{21;12}+R_{23;23}+R_{23;32}+R_{22;11}+R_{22;33}),
\end{split}
\end{equation}
and
\begin{equation}\label{4Areafriccdire3}
\begin{split}
&\sum_{j,k,l}\int_{S_r}R_{3j,kl}x^3x^jx^kx^ld\Sigma_r^0\\
&=\frac{4}{5}\pi r^6R_{33;33}+\frac{4}{15}\pi
r^6(R_{31;31}+R_{31;13}+R_{32;32}+R_{32;23}+R_{33;11}+R_{33;22}).
\end{split}
\end{equation}
Hence
\begin{equation}\label{4Areafriccdire4}
\begin{split}
&\sum_{i,j,k,l}\int_{S_r}R_{ij;kl}x^ix^jx^kx^ld\Sigma_r^0\\
&=\frac{4}{15}\pi r^6\sum_{i,j}\lf(R_{ii;jj}+2R_{ij;ij}\ri).
\end{split}
\end{equation}
By the second Bianchi identity, we see that

$$\sum_i R_{ij;ij}=\frac12 R_{;jj}.$$
Therefore, we have

\begin{equation}\label{4Areafriccdire5}
\sum_{i,j,k,l}\int_{S_r}R_{ij,kl}x^ix^jx^kx^ld\Sigma_r^0 =
\frac{8\pi r^6}{15}\Delta R(p). \end{equation}
 The lemma follows from  \eqref{4area-s1-1},
 \eqref{4area-s2}, \eqref{4area-s3-1}, \eqref{4area-s4},
 \eqref{4Areafricci11}, \eqref{4curv-s4-1}, \eqref{4curv-s6}, and
  \eqref{4Areafriccdire5}.
 \end{proof}
\begin{coro}\label{expansionofarea-cor}
With the notations and assumptions as in Lemma
\ref{expansionofarea}, let $H$ be the mean curvature of $S_r$ with
respect to $g$, then
\begin{equation}\label{meancurvint-small}
  \int_{S_r}Hd\Sigma_r=8\pi r+\frac{4A_4+6A_6}{r}+O(r^6).
\end{equation}
\end{coro}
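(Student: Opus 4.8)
The plan is to derive the mean curvature integral from the area expansion in Lemma~\ref{expansionofarea} together with the first variation of area. Since $S_r$ is a geodesic sphere centered at $p$ and $|\nabla r|=1$ everywhere (as $r$ is the geodesic distance), the first variation formula reads simply $\mathcal{A}'(r)=\int_{S_r}H\,d\Sigma_r$; unlike the asymptotically flat case in Lemma~\ref{meancurvint} there is no correction term coming from $|\nabla r|\ne 1$. Thus the entire computation reduces to differentiating the expansion $\mathcal{A}(r)=4\pi r^2+A_4+A_6+O(r^7)$ term by term.

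First I would record that $A_4=-\tfrac{2\pi r^4}{9}R$ and $A_6=\tfrac{\pi r^6}{675}(4R^2-2|Ric|^2-9\Delta R)$ are homogeneous polynomials in $r$ of degrees $4$ and $6$ respectively, with coefficients depending only on curvature quantities evaluated at the fixed point $p$. Hence $\tfrac{d}{dr}A_4=\tfrac{4}{r}A_4$ and $\tfrac{d}{dr}A_6=\tfrac{6}{r}A_6$. Differentiating gives
\begin{equation*}
\int_{S_r}H\,d\Sigma_r=\mathcal{A}'(r)=8\pi r+\frac{4A_4}{r}+\frac{6A_6}{r}+O(r^6),
\end{equation*}
which is exactly \eqref{meancurvint-small}. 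The only point requiring a word of justification is that the error term $O(r^7)$ in $\mathcal{A}(r)$ differentiates to $O(r^6)$; this follows because the expansion in Lemma~\ref{expansionofarea} comes from integrating over $S_r$ the smooth expansion of $\sqrt{g}$ in normal coordinates (as in the proof of that lemma, via $\mathcal{A}(r)=V'(r)$ and the explicit polynomial structure of $b_2,b_3,b_4$), so the remainder is genuinely a smooth function of $r$ whose Taylor expansion starts at order $r^7$, and term-by-term differentiation of such an expansion is legitimate.

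There is essentially no obstacle here: the corollary is a direct consequence of Lemma~\ref{expansionofarea} and the first variation of area, exploiting the scaling homogeneity of $A_4$ and $A_6$. The only mild subtlety, as noted, is making sure the differentiation of the remainder term is justified, which is immediate from the construction of the area expansion via integration of the metric expansion \eqref{4area-s1}. I would present the proof in two or three lines along these lines.
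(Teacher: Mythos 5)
Your proof is correct and is exactly the paper's argument: the identity $\int_{S_r}H\,d\Sigma_r=\mathcal{A}'(r)$ from $|\nabla r|=1$, followed by term-by-term differentiation of the expansion in Lemma \ref{expansionofarea} using the homogeneity of $A_4$ and $A_6$. Your extra remark justifying differentiation of the $O(r^7)$ remainder is a welcome (if routine) addition that the paper leaves implicit.
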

\begin{proof}  By the fact that $|\nabla r|=1$, we have
$$
\int_{S_r}Hd\Sigma_r=\frac{d}{dr}\mathcal{A}(r).
$$
The corollary then follows from Lemma \ref{expansionofarea}.
\end{proof}

By \cite{Niren}, and the fact that for $r$ small $(S_r,g|_{S_r})$
has positive Gauss curvature, one can isometrically embed
$(S_r,g|_{S_r})$ in $\R^3$.
\begin{lemm}\label{normalexpansion}
For $r$ small enough, there is an    isometric embedding $Z$  of
geodesic sphere $S_r$ into $\mathbb{R}^3$ such that
\begin{equation}
Z\cdot n=r+\frac{r^3}{6}\lf(\frac R2-2R_{ij}\frac{x^ix^j}{r^2} \ri)
+ O(r^4),
\end{equation}
where $n$ be the outward unit normal vector of $Z(S_r)$ in
$\mathbb{R}^3$ and `$\cdot$' is the inner product in $\R^3$.
\end{lemm}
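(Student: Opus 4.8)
The goal is to find an isometric embedding $Z$ of the small geodesic sphere $(S_r, g|_{S_r})$ into $\mathbb{R}^3$ and to extract from it the support function $Z\cdot n$ up to order $O(r^4)$. The plan is to mimic the rescaling argument used in Lemma~\ref{isomr1} for the large-sphere case, but now with the metric expansion from Lemma~\ref{metric-expansion-1}. First I would set $x = ry$ and pull back the metric $g$ to the $y$-space, so that the unit sphere $|y|=1$ carries a metric $ds_r^2$ which is a small perturbation of the round metric $ds_0^2$ of the unit sphere $\mathbb{S}^2\subset\mathbb{R}^3$. Concretely, writing $h_{ij}$ for the metric induced on $S_r$ by $g$ and using \eqref{ssbymproofthm1}, one gets on $\mathbb{S}^2$
\begin{equation}\label{smallpert}
ds_r^2 = ds_0^2 + \tfrac{r^2}{3}\,\hs_{ij} + O(r^3),
\end{equation}
where $\hs_{ij}$ is built from $R_{iklj}(p)\,y^ky^l$ projected tangentially to $\mathbb{S}^2$; the key point is that the perturbation has size $O(r^2)$ in $C^3$ on $\mathbb{S}^2$.

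Next I would invoke Nirenberg's theorem (\cite{Niren}, p.~353) exactly as in Lemma~\ref{isomr1}: for $r$ small the metric $ds_r^2$ has positive Gauss curvature, so it embeds isometrically in $\mathbb{R}^3$ by a map $\hat Z_r$ with $\|\hat Z_r - Z_0\|_2 = O(r^2)$, where $Z_0$ is the identity embedding of the unit sphere. Then $Z = r\hat Z_r$ is an isometric embedding of $(S_r, g|_{S_r})$, and $Z\cdot n = r(\hat Z_r \cdot \hat n_r)$ where $\hat n_r$ is the unit normal to $\hat Z_r(\mathbb{S}^2)$. To get the support function of $\hat Z_r$ to the required order, I would write $\hat Z_r = Z_0 + \tfrac{r^2}{3} W + O(r^3)$ for a vector field $W$ on $\mathbb{S}^2$ and linearize the isometric embedding equation: the condition that $\hat Z_r$ induce $ds_r^2$ forces the symmetrized tangential covariant derivative of $W$ to match $\tfrac12\hs_{ij}$, which is a standard linear elliptic system on $\mathbb{S}^2$. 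The support function perturbation $Z_0\cdot\hat n_r$ then changes at order $r^2$ by a term that can be read off $W$ (equivalently, from the trace part and the Gauss-curvature deficit), and a short computation using $R_{iklj}(p)y^ky^l\,y^iy^j = \sum\lambda_i(y^i)^2 \cdot r^2 - (\ldots)$ together with the $3$-dimensional identity \eqref{4curv-s2} yields exactly
\begin{equation}
\hat Z_r\cdot\hat n_r = 1 + \tfrac{r^2}{6}\Bigl(\tfrac R2 - 2R_{ij}\tfrac{x^ix^j}{r^2}\Bigr) + O(r^3),
\end{equation}
and multiplying by $r$ gives the claim.

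An alternative, and probably cleaner, route avoids solving the linear system directly: since $\dim S_r = 2$, the embedded surface is determined by its metric, and one can compute $Z\cdot n$ from the formula $2\,d(Z\cdot n)/d(\text{scaling})$ via the Minkowski-type relations, or simply use that for a surface close to a round sphere the support function is $1$ plus the solution of $(\Delta_{\mathbb{S}^2} + 2)u = $ (Gauss curvature perturbation). The Gauss curvature of $(\mathbb{S}^2, ds_r^2)$ is $1 + \tfrac{r^2}{3}\kappa_2(p,y) + O(r^3)$ where $\kappa_2$ is a quadratic expression in $y$ determined by the sectional curvatures at $p$, and solving this single scalar equation on $\mathbb{S}^2$ (whose spherical-harmonic content is degree $\le 2$) produces the stated expression. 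The main obstacle is bookkeeping: one must carefully keep track of which tensor indices are being raised with $\delta_{ij}$ versus $g_{ij}$, restrict the ambient curvature terms to the sphere correctly, and confirm that the degree-$0$ and degree-$2$ spherical-harmonic components combine into precisely $\tfrac R2 - 2R_{ij}x^ix^j/r^2$; there are no analytic subtleties beyond the already-cited Nirenberg estimate, but a sign or coefficient slip here would propagate into the $r^5$ term of Theorem~\ref{smallspherelimit-in}, so the computation of $Z\cdot n$ to order $r^3$ must be done with care.
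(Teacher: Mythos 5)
Your proposal is correct in outline and does lead to the stated formula, but it obtains the support function by a genuinely different route from the paper. Both arguments share the same scaffolding: rescale by $x=ry$, note that the induced metric on the unit sphere is the round metric plus an $O(r^2)$ perturbation in $C^3$, and use Nirenberg's stability estimate so that the true isometric embedding is $C^2$-close to an approximating surface whose support function one can compute. The difference is in how that support function is produced. The paper writes down an \emph{explicit ansatz} $\hat Z=(z^1,z^2,z^3)$ with $z^i=y^i\lf(1+\frac{r^2}{6}\lf(\frac R2-\lambda_i-\sum_k\lambda_k(y^k)^2\ri)\ri)$, verifies by direct computation in spherical coordinates that its induced metric matches the truncated metric $\delta_{ij}+\frac{r^2}{3}R_{iklj}y^ky^l$ restricted to $\mathbb{S}^2$ to the required order, and then evaluates $\hat Z\cdot\hat n$ via the cross-product formula; no PDE is solved, at the price of having to guess the ansatz. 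You instead propose to solve the linearized problem, most cleanly the scalar equation $(\Delta_{\mathbb{S}^2}+2)u=-\delta K$ for the support-function perturbation, where $\delta K=r^2\lf(\frac R2-\frac43 R_{ij}y^iy^j\ri)$ is the intrinsic Gauss-curvature deficit (cf.\ Lemma \ref{Gauss-mean}). This does work: writing $R_{ij}y^iy^j=\frac R3+P_2$ with $P_2$ a degree-$2$ spherical harmonic and inverting $\Delta_{\mathbb{S}^2}+2$ (which acts by $2$ on constants and $-4$ on degree-$2$ harmonics) gives $u=-r^2\lf(\frac R{36}+\frac13 P_2\ri)=\frac{r^2}{6}\lf(\frac R2-2R_{ij}y^iy^j\ri)$, exactly the lemma. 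Your route buys systematics and makes the harmonic structure of the answer transparent; what it costs is two points you only gesture at and would need to justify: (i) passing from ``the Nirenberg embedding is $O(r^2)$-close in $C^2$'' to ``its normal displacement satisfies the linearized equation up to $O(r^3)$'', which uses that the quadratic error is $O(\|V\|^2_{C^1})=O(r^4)$; and (ii) the kernel of $\Delta_{\mathbb{S}^2}+2$, the degree-$1$ harmonics corresponding to translations, so $u$ is determined only up to a translation --- harmless here since $\delta K$ is even in $y$ and the lemma asserts existence of \emph{some} embedding, but it should be stated. Finally, the parenthetical identity $R_{iklj}(p)y^ky^l y^iy^j=\dots$ in your sketch is garbled (that full contraction vanishes by the antisymmetries of the curvature tensor); the identity actually needed is \eqref{4curv-s2}.
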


\begin{proof}
For $r>0$, we define a map $x=ry$ and pull back the metric $g$ to
the $y$ space and let $h$ be the   metric $r^{-2}g$ induced on the
unit sphere $ \mathbb{S}^2$ in the $y$ space. As in the proof of
Lemma \ref{isomr1}, in order to prove the lemma  it is sufficient
to prove that for $r$ small, we can find an isometric embedding
$Z_r$ of $(\mathbb{S}^2, h)$ in $\R^3$ such that
\begin{equation}
Z_r\cdot n_r=1+\frac{r^2}{6}\lf(\frac R2-2\sum
\lambda_k(y^k)^2\ri) + O(r^3),
\end{equation}
where $n_r$ is the unit outward normal of $Z_r(\mathbb{S}^2)$.

 Let $\th$ be the
induced metric of $r^{-2}\tg$ on the unit sphere $\mathbb{S}^2$,
where
\begin{equation}\label{pullback-1}
    \tg_{ij}=\tg\lf(\frac{\p}{\p y^i},\frac{\p}{\p y^j}\ri)=r^2
    \lf(\delta_{ij}+\frac{r^2}{3}R_{iklj}y^ky^l\ri).\nonumber
\end{equation}

Let $\hat h$ be the metric on $\mathbb{S}^2$ induced by the pull
back of the Euclidean metric given by the embedding $\hat
Z=(z^1,z^2,z^3)$ in $\R^3$ where
\begin{equation}
    \begin{split}
     z^1 & =y^1
     \lf(1+\frac {r^2}6\lf(\frac R2-\lambda_1-
     \sum_{i}\lambda_{i}(y^i)^2\ri)\ri) \\
      z^2  &= y^2
     \lf(1+\frac {r^2}6\lf(\frac R2-\lambda_2-
     \sum_{i}\lambda_{i}(y^i)^2\ri)\ri)\\
      z^3&= y^3
     \lf(1+\frac {r^2}6\lf(\frac R2-\lambda_3-
     \sum_{i}\lambda_{i}(y^i)^2\ri)\ri).
    \end{split}\nonumber
\end{equation}
We  claim that
\begin{equation}\label{approx-e1}
    ||h-\th  ||_{C^3}+||\hat h-\th   ||_{C^3}=O(r^3)
\end{equation}
where the norm is computed with respect to the standard metric.
Suppose the claim is true, then by \cite{Niren}, we can conclude
that there are isometric embeddings $Z_r$, $\tilde Z$ and $\hat Z$
for $(\mathbb{S}^2, h)$, $(\mathbb{S}^2, \th  )$ and
 $(\mathbb{S}^2, \hat h)$ respectively such that
\begin{equation}\label{approx-e2}
    ||Z_r\cdot n_r-\hat Z \cdot \hat n  ||=O(r^3)
\end{equation}
where $\hat n$ is the unit outward normal of $\hat
Z(\mathbb{S}^2)$. Then we can prove the lemma by computing $\hat Z
\cdot \hat n $.

Let us first prove the claim and then  compute $\hat Z \cdot \hat
n $. It is easy to see that $||h-\th  ||_{C^3}=O(r^3)$ by the
expression of  $g$ in Lemma \ref{metric-expansion-1} and the
definition of  $\tg$.

To find $\th$, by \eqref{4curv-s2}, on the unit sphere of the $y$
space:
 $r^{-2}\wt g_{ij}=\delta_{ij}+\wt \sigma_{ij}$, and
$\lambda_{ij}=-(\lambda_i+\lambda_j)+\frac R2$, then
\begin{equation}\label{metric-s2}
\begin{split}
    \wt\sigma_{11}&=
    \frac{r^2}3(\lambda_{12}(y^2)^2+\lambda_{13}(y^3)^2)\\
    \wt\sigma_{22}&=\frac{r^2}3(\lambda_{12}(y^1)^2+\lambda_{23}(y^3)^2)\\
 \wt\sigma_{33}&=\frac{r^2}3(\lambda_{13}(y^1)^2+\lambda_{23}(y^2)^2)\\
\wt\sigma_{ij}&=-\frac{r^2}3\lambda_{ij}y^i y^j, ~\text{$i\neq
j$}.
\end{split}\nonumber
\end{equation}
In the above last equation  the repeated indices is not taken
summation.

Let $e_1=\p_\theta=a_i\p_i$, $e_2=(\sin
\theta)^{-1}\p_\phi=b_i\p_i$. Then
\begin{equation}\label{metric-s3}
    a_1=\cos\theta\cos\phi, a_2=\cos\theta\sin\phi,
    a_3=-\sin\theta; b_1=-\sin\phi, b_2=\cos\phi, b_3=0.\nonumber
\end{equation}
Note that $$y^1=\sin\theta\cos\phi, y^2=\sin\theta\sin\phi,
y^3=\cos\theta. $$ Hence in the basis $\{e_1, e_2\}$, $\th$ is
given by

\begin{equation}\label{embedding-s1}
\begin{split}
\th_{11}&=1+\frac{r^2}3
\lf(\lambda_{23}\sin^2\phi+\lambda_{31}\cos^2\phi\ri),\\
\th_{12}&=
  \frac{r^2}3\lf(-\lambda_{13}+\lambda_{23}\ri)\cos\theta\cos\phi\sin\phi,\\
  \th_{22}&=1+ \frac{r^2}3\lf(\lambda_{12}\sin^2\theta+
\lambda_{13}\cos^2\theta\sin^2\phi+\lambda_{23}\cos^2\theta\cos^2\phi\ri).
\end{split}
\end{equation}
Next we want to compute $\hat h$.
\begin{equation}
    \begin{split}
     (z^1)_\theta& =(y^1)_\theta
     \lf(1+\frac {r^2}6\lf(\frac R2-\lambda_1-
     \sum_{i}\lambda_{i}(y^i)^2\ri)\ri)-\frac {r^2}6
     y^1\lf(\sum_{i}\lambda_{i}(y^i)^2\ri)_\theta\\
(z^2)_\theta&= (y^2)_\theta
     \lf(1+\frac {r^2}6\lf(\frac R2-\lambda_2-
     \sum_{i}\lambda_{i}(y^i)^2\ri)\ri)-\frac {r^2}6
     y^2\lf(\sum_{i}\lambda_{i}(y^i)^2\ri)_\theta\\
     (z^3)_\theta&=(y^3)_\theta
     \lf(1+\frac {r^2}6\lf(\frac R2-\lambda_3-
     \sum_{i}\lambda_{i}(y^i)^2\ri)\ri)-\frac {r^2}6
     y^3\lf(\sum_{i}\lambda_{i}(y^i)^2\ri)_\theta
    \end{split}\nonumber
    \end{equation}
    and
 \begin{equation}
    \begin{split}
     (z^1)_\phi& =(y^1)_\phi
     \lf(1+\frac {r^2}6\lf(\frac R2-\lambda_1-
     \sum_{i}\lambda_{i}(y^i)^2\ri)\ri)-\frac {r^2}6
     y^1\lf(\sum_{i}\lambda_{i}(y^i)^2\ri)_\phi\\
(z^2)_\phi&= (y^2)_\phi
     \lf(1+\frac {r^2}6\lf(\frac R2-\lambda_2-
     \sum_{i}\lambda_{i}(y^i)^2\ri)\ri)-\frac {r^2}6
     y^2\lf(\sum_{i}\lambda_{i}(y^i)^2\ri)_\phi\\
     (z^3)_\phi&=(y^3)_\phi
     \lf(1+\frac {r^2}6\lf(\frac R2-\lambda_3-
     \sum_{i}\lambda_{i}(y^i)^2\ri)\ri)-\frac {r^2}6
     y^3\lf(\sum_{i}\lambda_{i}(y^i)^2\ri)_\phi.
    \end{split}\nonumber
    \end{equation}
       Hence
    \begin{equation}\begin{split}
\hat Z_\theta\cdot\hat Z_\theta&=(z^1)_\theta^2+(z^2)_\theta^2+(z^3)_\theta^2\\
&=\sum_i(y^i)_\theta^2
     \lf(1+\frac {r^2}3\lf(\frac R2-\lambda_i-
     \sum_{k}\lambda_{k}(y^k)^2\ri)\ri)\\
     &-\frac {r^2}3
     \sum_iy^i(y^i)_\theta\lf(\sum_{i}\lambda_{i}(y^i)^2\ri)_\theta+O(r^4)\\
     &=1+\frac{r^2}3\lf[\frac R2-\sum_i
     \lambda_i\lf((y^i)_\theta^2+(y^i)^2)\ri)\ri]+O(r^4)\\
     &=1+\frac{r^2}3\lf(\frac R2-
     \lambda_1\cos^2\phi-\lambda_2\sin^2\phi-\lambda_3
      \ri)+O(r^4)\\
      &=1+\frac{r^2}3\lf(
     \lambda_{13}\cos^2\phi+\lambda_{23}\sin^2\phi
      \ri)+O(r^4),
\end{split}\nonumber
\end{equation}

 \begin{equation}\begin{split}
\hat Z_\phi&\cdot \hat Z_\phi\\
&=(z^1)_\phi^2+(z^2)_\phi^2+(z^3)_\phi^2\\
&=\sum_i(y_i)_\phi^2
     \lf(1+\frac {r^2}3\lf(\frac R2-\lambda_i-
     \sum_{k}\lambda_{k}y_k^2\ri)\ri)- \frac {r^2}3
     \sum_iy_i(y_i)_\phi\lf(\sum_{i}\lambda_{i}y_i^2\ri)_\phi+O(r^4)\\
     &=\sin^2\theta+\frac{r^2}3\lf[\frac R2\sin^2\theta-\sum_i
     \lambda_i\lf((y_i)_\phi^2+y_i^2\sin^2\theta \ri)\ri]+O(r^4)\\
     &=\sin^2\theta\bigg[1+\frac{r^2}3\Big(\frac R2-
     \lambda_1\lf(\sin^2\phi+\cos^2\phi\sin^2\theta\ri)-\lambda_2\lf(
     \cos^2\phi+\sin^2\phi\sin^2\theta\ri)\\
     &-\lambda_3\cos^2\theta\Big)\bigg]+O(r^4)\\
     &=\sin^2\theta
     \lf[1+\frac{r^2}3\lf( \lambda_{12}\sin^2\theta+\lambda_{13}\cos^2\theta\sin^2\phi
     +\lambda_{23}\cos^2\theta\cos^2\phi\ri)\ri]+O(r^4),
\end{split}\nonumber
\end{equation}
and
\begin{equation}\begin{split}
\hat Z_\theta\cdot\hat Z_\phi&=(z^1)_\theta (z^1)_\phi
+(z^2)_\theta
(z^2)_\phi +(z^3)_\theta (z^3)_\phi\\
&=\sum_i(y^i)_\theta (y^i)_\phi+\frac{r^2}3\sum_i (y^i)_\theta
(y^i)_\phi\lf(\frac R2-\lambda_i-\sum_k\lambda_k (y^k)^2\ri)\\
&-\frac{r^2}6\lf(\sum_{i}y^i(y^i)_\theta\ri)
\lf(\sum_{i}\lambda_{i}(y^i)^2\ri)_\phi
-\frac{r^2}6\lf(\sum_{i}y^i(y^i)_\phi\ri)\lf(\sum_{i}\lambda_{i}(y^i)^2\ri)_\theta+O(r^4)\\
&=-\frac{r^2}3\sum_i (y^i)_\theta
(y^i)_\phi\lambda_i+O(r^4)\\
&=\frac{r^2}3\lf(-\lambda_{13}+\lambda_{23}\ri)\sin\theta\cos\theta\cos\phi\sin\phi+O(r^4).
\end{split}\nonumber
\end{equation}

Thus, we see that
\begin{equation}\label{difference of metric1}
\|\th - \hat h\|_{C^3} = O(r^4).
\end{equation}
This completes the proof of the claim. Next we want to compute
$\hat Z\cdot \hat n$.

Let $A=(y^1,y^2,y^3)$, $B=A_\theta$, $C=\frac1{\sin\theta}A_\phi$,
$\bar A=(\bar A_1, \bar A_2,\bar A_3)$, $\bar B=(\bar B_1, \bar
B_2,\bar B_3)$, and $\bar C=(\bar C_1, \bar C_2,\bar C_3)$ where
\begin{equation}
 \begin{split}
    \bar A_i  &= \frac {r^2y^i}6\lf(\frac R2-\lambda_i-
     \sum_{i}\lambda_{k}(y^k)^2\ri)\\
      \bar B_i  &=
      \frac {(y^i)_\theta r^2}6\lf(\frac R2-\lambda_i-
     \sum_{k}\lambda_{k}(y^k)^2\ri) -\frac {r^2}6
     y^i\lf(\sum_{k}\lambda_{k}(y^k)^2\ri)_\theta\\
     \bar C_i&=
      \frac {(y^i)_\phi r^2}{6\sin\theta}\lf(\frac R2-\lambda_i-
     \sum_{k}\lambda_{k}(y^k)^2\ri)-\frac {r^2}{6\sin\theta}
     y^i\lf(\sum_{k}\lambda_{k}(y^k)^2\ri)_\phi.
\end{split}\nonumber
\end{equation}
Note that $A, B, C$ are orthonormal and positively oriented in
$\R^3$ for $A\in  \mathbb{S}^2$. Let $e_1=\p_\theta$ and
$e_2=\frac1{\sin\theta}\p_\phi$ as before. Then
\begin{equation}
\begin{split}
    \hat Z\cdot \hat Z_1\wedge \hat Z_2 & =(A+\bar A)\cdot (B+\bar B)\wedge(C+\bar C)\\
        &=A\cdot B\wedge C+A\cdot B\wedge \bar C+ A\cdot \bar B\wedge
        C+\bar A\cdot B\wedge   C+O(r^4)\\
        &=1+C\cdot \bar C+B\cdot \bar B+A\cdot \bar A+O(r^4).
\end{split}\nonumber
\end{equation}
Now
\begin{equation}
\begin{split}
A\cdot \bar A&=\frac{r^2}6\lf(\frac
R2-2\sum_{k}\lambda_k(y^k)^2\ri).
\end{split}\nonumber
\end{equation}
\begin{equation}
\begin{split}
B\cdot \bar B&=\frac{r^2}6\lf(\frac R2-
\sum_{k}\lambda_k\lf((y^k)_\theta^2+(y^k)^2\ri)\ri)\\
&=\frac{r^2}6\lf(\frac R2-
 \lf(\lambda_1\cos^2\phi+\lambda_2\sin^2\phi+\lambda_3 \ri)\ri).
\end{split}\nonumber
\end{equation}
\begin{equation}
\begin{split}
C\cdot \bar C&=\frac{r^2}{6\sin^2\theta}\lf(\frac R2\sin^2\theta-
\sum_{k}\lambda_k\lf((y^k)_\phi^2+(y^k)^2\sin^2\theta\ri)\ri)\\
&=\frac{r^2}6\lf(\frac R2-
 \lf(\lambda_1\lf(\sin^2\phi+\sin^2\theta\cos^2\phi\ri)
 + \lambda_2\lf(\cos^2\phi+\sin^2\theta\sin^2\phi\ri)+\lambda_3\cos^2\theta \ri)\ri).
\end{split}\nonumber
\end{equation}
So
\begin{equation}\label{Zdotn-e1}
\begin{split}
 \hat Z\cdot \hat Z_1\wedge \hat Z_2 &=1+\frac{r^2}{6}\lf(\frac R2-3\sum
\lambda_k(y^k)^2\ri)+O\lf(r^4\ri).
\end{split}
\end{equation}

Noting that
\begin{equation}
\begin{split}
&|\hat Z_1\wedge \hat Z_2 |^2\\
&=1+\frac{r^2}{3}[\lf(\lambda_{13}(\cos^2\phi
+\cos^2\theta\sin^2\phi)+\lambda_{12}\sin^2\theta
+\lambda_{23}(\sin^2\phi+\cos^2\theta\cos^2\phi)\ri)]+O\lf(r^4\ri)\\
&=1-\frac{r^2}{3}\sum\lambda_k(y^k)^2+O\lf(r^4\ri).
\end{split}\nonumber
\end{equation}
we have
\begin{equation}\label{Zdotn-e2}
 |\hat Z_1\wedge\hat Z_2
|^{-1}=1+\frac{r^2}{6}\sum\lambda_k(y^k)^2+O\lf(r^4\ri).
\end{equation}

Combining \eqref{Zdotn-e1} and \eqref{Zdotn-e2}
\begin{equation}
\begin{split}
\hat Z\cdot \hat n=\frac{\hat Z\cdot \hat Z_1\wedge \hat Z_2
}{|\hat Z_1\wedge \hat Z_2 |}=1+\frac{r^2}{6}\lf(\frac R2-2\sum
\lambda_k(y^k)^2\ri)+O\lf(r^4\ri).
\end{split}
\end{equation}
This completes the proof of the lemma.
\end{proof}

\begin{lemm}\label{Gauss-mean} Let $K$ and $H$ be the Gauss
curvature  and the mean curvature of $S_r$ in $g$ and $H_0$ be the
mean curvature of $(S_r,g|_{S_r})$ when embedded in $\R^3$. Then
\begin{equation}\label{Gauss-mean-e1}
   K=\frac1 {r^2}+\frac R2-\frac 43R_{ij}\frac{x^ix^j}{r^2}+O(r),
\end{equation}
\begin{equation}\label{Gauss-mean-e2}
  H=\frac2 {r}-\frac13  R_{ij}\frac{x^ix^j}r +O(r^2),
\end{equation}
and
\begin{equation}\label{Gauss-mean-e3}
  H_0=\frac2 {r}+ r\lf(\frac R2-\frac 43  R_{ij}\frac{x^ix^j}{r^2}\ri)+O(r^2).
\end{equation}
\end{lemm}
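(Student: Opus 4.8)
The plan is to reduce each of the three expansions to information already available: the determinant expansion of Lemma \ref{metric-expansion-1}, the three-dimensional curvature identity \eqref{4curv-s1}, and the near-round isometric embedding of Lemma \ref{normalexpansion}; and to treat $H$ first, then $K$, then $H_0$. Since $S_r$ is the level set $\{r=\textrm{const}\}$ of the distance function to $p$ and $|\nabla r|\equiv1$, its mean curvature with respect to the outward normal is $H=\Delta_g r$. Writing the Laplacian in the normal coordinates $\{x^i\}$ and using the Gauss lemma $g^{ij}x^j=x^i$ (so that $g^{ij}\p_j r=x^i/r$), one obtains
\[
H=\Delta_g r=\frac1{\sqrt{\det(g_{kl})}}\,\p_i\!\lf(\sqrt{\det(g_{kl})}\,\frac{x^i}r\ri)=\frac2r+\frac{\p}{\p r}\log\sqrt{\det(g_{kl})}.
\]
By \eqref{4area-s1}, $\sqrt{\det(g_{kl})}=1-\frac16R_{ij}(p)x^ix^j+O(r^3)$, hence $\frac{\p}{\p r}\log\sqrt{\det(g_{kl})}=-\frac13R_{ij}(p)\frac{x^ix^j}{r}+O(r^2)$, which is \eqref{Gauss-mean-e2}.

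For $K$ I would apply the Gauss equation for the surface $S_r\subset(N,g)$: $K=\mathrm{Sec}_N(T_xS_r)+\det A$, where $\mathrm{Sec}_N(T_xS_r)$ is the sectional curvature of $(N,g)$ on the tangent plane of $S_r$ at $x$ and $\det A$ is the product of the principal curvatures of $S_r$. The tangent plane of $S_r$ at $x$ is the orthogonal complement of the radial direction $x/r$, so applying \eqref{4curv-s1} at $p$ and using smoothness of the curvature tensor gives $\mathrm{Sec}_N(T_xS_r)=\frac R2-R_{ij}\frac{x^ix^j}{r^2}+O(r)$, all curvature terms being evaluated at $p$. For $\det A$ I would use that geodesic spheres are nearly umbilic for small $r$: after the rescaling $x=ry$ the metric $r^{-2}g$ on $\{|y|=1\}$ tends to the Euclidean metric by Lemma \ref{metric-expansion-1}, so the principal curvatures of $S_r$ are $\frac1r+O(r)$ and therefore $\det A=\lf(\frac H2\ri)^2+O(r^2)$. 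Inserting the expansion of $H$ just obtained gives $\det A=\frac1{r^2}-\frac13R_{ij}\frac{x^ix^j}{r^2}+O(r)$, and adding the two contributions yields \eqref{Gauss-mean-e1}. (Alternatively, $\det A$ may be computed directly from the expansion $A_{ij}=\frac{h_{ij}}{r}+O(r)$, whose $O(r)$ term is read off from Lemma \ref{metric-expansion-1}.)

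For $H_0$ I would note that the Gauss curvature $K$ of $(S_r,g|_{S_r})$ is an intrinsic invariant, hence is unchanged when $S_r$ is isometrically embedded in $\R^3$; since $\R^3$ is flat, the Gauss equation there reduces to $\det A_0=K$ for the second fundamental form $A_0$ of the embedding. By Lemma \ref{normalexpansion}, together with the stability of Nirenberg's isometric embedding used in the proof of Lemma \ref{isomr1}, the rescaled embedding is $C^2$-close to the round unit sphere, so the image of $S_r$ in $\R^3$ has principal curvatures $\frac1r+O(r)$; this near-umbilicity gives $H_0=2\sqrt K+O(r^2)$. Expanding $2\sqrt K$ by means of \eqref{Gauss-mean-e1} then produces exactly \eqref{Gauss-mean-e3}.

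The step I expect to be the main obstacle is making these near-umbilicity estimates precise to the order required, especially for the isometric image of $S_r$ in $\R^3$: there one must control second derivatives of the Nirenberg embedding via the $C^3$-closeness of the rescaled induced metric to the round metric established in the proof of Lemma \ref{normalexpansion}. If one wishes to avoid these soft arguments, the correction terms can instead be obtained by direct computation --- of the shape operator of $S_r$ from Lemma \ref{metric-expansion-1} for \eqref{Gauss-mean-e1}, and of the mean curvature of the explicit approximate embedding $\hat Z$ of Lemma \ref{normalexpansion} for \eqref{Gauss-mean-e3} --- at the cost of heavier index bookkeeping.
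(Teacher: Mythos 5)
Your proposal is correct and its skeleton matches the paper's: the Gauss equation together with the dimension-three curvature identity \eqref{4curv-s1} for $K$, and the intrinsic invariance of $K$ plus the near-roundness of the Nirenberg embedding for $H_0$. The individual computations, however, go by different routes. For $H$ you use $H=\Delta_g r=\frac2r+\p_r\log\sqrt{\det g}$ and read the answer off the determinant expansion \eqref{4area-s1}, whereas the paper computes the Christoffel symbols from Lemma \ref{metric-expansion-1}, writes out the second fundamental form $A_{ij}=\frac{\delta_{ij}}r-\frac{x^ix^j}{r^3}-\frac23R_{kimj}\frac{x^kx^m}{r}+O(r^2)$, and traces it; your route is shorter and reuses a formula already needed for Lemma \ref{expansionofarea}, while the paper's yields the full tensor $A_{ij}$, which it then feeds directly into $\det A$. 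For $\det A$ and for $H_0$ you invoke near-umbilicity ($\det A=(H/2)^2+O(r^2)$ and $H_0=2\sqrt K+O(r^2)$), where the paper instead computes $\lambda_1\lambda_2$ in a Euclidean-orthonormal frame and, for $H_0$, linearizes $K_r$ and $H_r$ in the perturbations $\alpha_{ab},\beta_{ab}$ of the metric and second fundamental form of $Z_r(\mathbb S^2)$, observing that the same trace $-\alpha_{11}-\alpha_{22}+\beta_{11}+\beta_{22}$ controls both; these are equivalent statements to the required order, since $H_0-2\sqrt K=(\sqrt{\kappa_1}-\sqrt{\kappa_2})^2=O(r^3)$ once the rescaled principal curvatures are $1+O(r^2)$. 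The one point you rightly flag --- $C^2$ control of the isometric embedding --- is exactly what the paper extracts from Nirenberg's theorem in the form $\|Z_r-\mathrm{Id}\|_2=O(r^2)$ (after composing with a rigid motion), so there is no gap beyond what the paper itself assumes; your version buys lighter index bookkeeping at the cost of stating the umbilicity estimates explicitly, while the paper's direct computation produces the intermediate quantities \eqref{2nd-small-e1} that are not needed elsewhere but make every error term transparent.
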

\begin{proof}
We continue to use the normal coordinates as in Lemma
\ref{metric-expansion-1}. Then $n=\frac{\p }{\p r}$ is the outward
normal of $S_r$. Let $h_{ij}=g_{ij}-n_in_j$,  be the induced
metric on $S_r$ with $n_i=\frac{x^i}r$. By Lemma
\ref{metric-expansion-1}, the Christoffel symbols are given by:
\begin{equation}\label{connection-small}
    \Gamma_{ij}^k=\frac13\lf(R_{kimj}+R_{kjmi}\ri)x^m+O(r^2).
\end{equation}
where the curvature are evaluated at $p$. Since $\nabla_nn=0$,
  the second fundamental form $A$ in these coordinates is
given by
\begin{equation}\label{2nd-small-e1}
    \begin{split}
A_{ij}&=n_{j;i}\\
&=\frac{\p n_j}{\p x^i}-\Gamma_{ij}^kn_k\\
&=\frac{\delta_{ij}}r-\frac{x^ix^j}{r^3}-\frac23R_{kimj}\frac{x^kx^m}r+O(r^2).
\end{split}
\end{equation}
Let $e_1$, $e_2$ be orthonormal frame with respect to the
Euclidean metric on $S_r$ and let $\lambda_1$ and $\lambda_2$ be
the eigenvalues of $A$. Then
\begin{equation}\label{2nd-small-e2}
 \begin{split}
 \lambda_1\lambda_2&=\frac{A(e_1,e_1)A(e_2,e_2)-A^2(e_1,e_2)}
  {g(e_1,e_2)g(e_2,e_2)-g^2(e_1,e_2)}\\
  &=\lf(\frac1{r^2}-\frac23
  R_{kimj}\frac{x^kx^m}{r^2}\lf(e_1(x^i)e_1(x^j)+e_2(x^i)e_2(x^j)\ri)+O(r)\ri)\\
  &\quad \times\lf(1-\frac13R_{ikmj}x^kx^m
  \lf(e_1(x^i)e_1(x^j)+e_2(x^i)e_2(x^j)\ri)+O(r^3)\ri)\\
  &=\frac1{r^2}-\frac13R_{kimj}\frac{x^kx^m}{r^2}\lf(e_1(x^i)e_1(x^j)+e_2(x^i)e_2(x^j)\ri)+O(r)\\
  &=\frac1{r^2}-\frac13 R_{km}\frac{x^kx^m}{r^2}+O(r),
\end{split}
\end{equation}
where we have used the fact that $\sum_{i}\lf(e_a(x^i)\ri)^2=1$
and $e_a(\sum_{i}(x^i)^2)=0$ on $S_r$ for $a=1, 2$, and the fact
that
$$
e_1(x^i)e_1(x^j)+e_2(x^i)e_2(x^j)=\delta_{ij}-\frac{x^ix^j}{r^2}.
$$
Hence by the Gauss equation, for $x\in S_r$,
\begin{equation}\label{2nd-small-e3}
   \begin{split}
 K(x)&=\lambda_1\lambda_2+\frac12h^{ik}h^{jl}R_{ijkl}(x)\\
 &=\frac1{r^2}-\frac13
 R_{km}\frac{x^kx^m}{r^2}+\frac12h^{ik}h^{jl}R_{ijkl}(p)+O(r)\\
 &=\frac1{r^2}+\frac12 R(p)-\frac43
 R_{ij}(p)\frac{x^ix^j}{r^2}+O(r)
\end{split}
\end{equation}
where $h^{ij}=g^{ij}-n^in^j$. On the other hand, for $x\in S_r$
\begin{equation}\label{2nd-small-e3}
   \begin{split}
 H(x)&=h^{ij}A_{ij}\\
 &=h^{ij}\lf(\frac{\delta_{ij}}r-
 \frac23R_{kimj}\frac{x^kx^m}r\ri)+O(r^2)\\
&=\lf(\delta_{ij}-\frac13
R_{ikmj}x^kx^m-\frac{x^ix^j}{r^2}\ri)\lf(\frac{\delta_{ij}}r -
 \frac23R_{kimj}\frac{x^kx^m}r\ri)+O(r^2)\\
 &=\frac2r-\frac13 R_{ij}\frac{x^ix^j}r+O(r^2)
\end{split}
\end{equation}
where we have used the fact that $h^{ij}x^ix^j=0$.

It remains to prove the last assertion. Let $Z_r$ be the embedding
as in the proof of  Lemma \ref{normalexpansion}. One may conclude
that by an isometry of $\R^3$, we have $||Z_r-Id||_2=O(r^2)$, where
$Id$ is the identity map of $\mathbb{S}^2$. Let $H_r$ and $K_r$ be
the mean curvature and Gauss curvature of $Z_r( \mathbb{S}^2)$. Let
$e_1$ and $e_2$ be orthonormal frames on $\mathbb{S}^2$ with respect
to the standard metric, then the metric tensor $h$ and the second
fundamental form $A$ of the surface $Z_r( \mathbb{S}^2)$ satisfies:
\begin{equation}
   h(e_a,e_b)=\delta_{ab}+\alpha_{ab},\ \  A(e_a,e_b)=\delta_{ab}+\beta_{ab},
\end{equation}
where $\alpha_{ab}=O(r^2)$ and $\beta_{ab}=O(r^2)$.  Hence we have
$$
K_r=1-\alpha_{11}-\alpha_{22}+\beta_{11}+\beta_{22}+O(r^4),\ \
H_r=2-\alpha_{11}-\alpha_{22}+\beta_{11}+\beta_{22}+O(r^4).
$$
After rescale to an embedding of   $(S_r,g|_{S_r})$ in $\R^3$, we
conclude that
$$
K=\frac1
{r^2}\lf(1-\alpha_{11}-\alpha_{22}+\beta_{11}+\beta_{22}\ri)+O(r^2)
$$
and
$$
H_0=\frac1
{r}\lf(2-\alpha_{11}-\alpha_{22}+\beta_{11}+\beta_{22}\ri)+O(r^3).
$$
From these and \eqref{Gauss-mean-e1}, \eqref{Gauss-mean-e3}
follows.
\end{proof}

We are ready to prove the following (Theorem
\ref{smallspherelimit-in}):
\begin{theo}\label{smallspherelimit}
Let $(N,g)$ be a Riemannian manifold of dimension three, $p$ be a
fixed interior point on $N$, and $S_r$ be the geodesic sphere of
radius $r$ center at $p$. For $r$ small enough, we have
\begin{equation}\label{smallspherelimit-e1}
m_{BY}(S_r)=\frac{r^3}{12}R(p)+\frac{r^5}{1440}\lf[24|Ric|^2
(p)-13R^{2}(p) + 12 \Delta R (p)\ri]+O(r^6),
\end{equation}
here, $\Delta$ is Laplacian operator of $(M,g)$.
\end{theo}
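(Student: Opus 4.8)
The plan is to write
\begin{equation*}
8\pi\, m_{BY}(S_r)=\int_{S_r}H_0\,d\Sigma_r-\int_{S_r}H\,d\Sigma_r
\end{equation*}
and to expand each integral through order $r^5$ with error $O(r^6)$. The integral of $H$ is already available: by Corollary \ref{expansionofarea-cor} together with Lemma \ref{expansionofarea},
\begin{equation*}
\int_{S_r}H\,d\Sigma_r=8\pi r+\frac{4A_4+6A_6}{r}+O(r^6).
\end{equation*}
The real task is therefore $\int_{S_r}H_0\,d\Sigma_r$. Let $Z$ be the isometric embedding of $(S_r,g|_{S_r})$ into $\R^3$ furnished by Lemma \ref{normalexpansion}, let $n$ be its outward unit normal, and put $\phi:=Z\cdot n-r$, so that $\phi=\frac{r^3}{6}\lf(\frac R2-2R_{ij}\frac{x^ix^j}{r^2}\ri)+O(r^4)$. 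Write also $K=\frac1{r^2}+\bar K$ for the Gauss curvature and $H_0=\frac2r+H_1$ for the mean curvature of the embedded surface; by Lemma \ref{Gauss-mean}, both $\bar K$ and $r^{-1}H_1$ equal $\frac R2-\frac43 R_{ij}\frac{x^ix^j}{r^2}+O(r)$ on $S_r$. On $S_r$ we use the angular variable $\omega=x/r\in\mathbb{S}^2$, so $R_{ij}\frac{x^ix^j}{r^2}=R_{ij}\omega^i\omega^j$ and, by Lemma \ref{metric-expansion-1}, $d\Sigma_r=(1+O(r^2))\,r^2\,d\omega$ with $d\omega$ the standard area form of $\mathbb{S}^2$.

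The key obstacle is that plugging Lemma \ref{normalexpansion} directly into the Minkowski formula $\int_{S_r}H_0\,d\Sigma_r=2\int_{S_r}K\,(Z\cdot n)\,d\Sigma_r$ (the one used in Lemma \ref{upperbound}) is too lossy, since the factor $K\sim r^{-2}$ turns the $O(r^4)$ tail of $Z\cdot n$ into an $O(r^4)$ error in the integral, already obscuring the $r^5$ coefficient. To avoid having to push the (delicate) expansions of Lemmas \ref{normalexpansion} and \ref{Gauss-mean} two orders further, I would combine \emph{both} Minkowski integral formulas with Gauss--Bonnet, exactly as in the large-sphere argument of Lemma \ref{upperbound}. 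Since $\int_{S_r}K\,d\Sigma_r=4\pi$ for $r$ small, substituting $K=\frac1{r^2}+\bar K$ and $Z\cdot n=r+\phi$ in the first Minkowski formula gives
\begin{equation*}
\int_{S_r}H_0\,d\Sigma_r=8\pi r+\frac{2}{r^2}\int_{S_r}\phi\,d\Sigma_r+2\int_{S_r}\bar K\,\phi\,d\Sigma_r,
\end{equation*}
while substituting $H_0=\frac2r+H_1$ and $Z\cdot n=r+\phi$ in the other Minkowski formula $\int_{S_r}H_0\,(Z\cdot n)\,d\Sigma_r=2\mathcal{A}(r)$ gives
\begin{equation*}
\int_{S_r}H_0\,d\Sigma_r=\frac{2\mathcal{A}(r)}{r}-\frac{2}{r^2}\int_{S_r}\phi\,d\Sigma_r-\frac1r\int_{S_r}H_1\,\phi\,d\Sigma_r.
\end{equation*}
Adding these two identities and dividing by $2$ cancels the troublesome term $\int_{S_r}\phi\,d\Sigma_r$ and produces the small-sphere analogue of Lemma \ref{upperbound},
\begin{equation*}
\int_{S_r}H_0\,d\Sigma_r=4\pi r+\frac{\mathcal{A}(r)}{r}+\int_{S_r}\bar K\,\phi\,d\Sigma_r-\frac1{2r}\int_{S_r}H_1\,\phi\,d\Sigma_r.
\end{equation*}

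Now only the leading term of the last two integrals is needed. Setting $c_0:=\frac R2-\frac43 R_{ij}\omega^i\omega^j$ and $p_3:=\frac16\lf(\frac R2-2R_{ij}\omega^i\omega^j\ri)$, one has $\bar K\,\phi=r^3 c_0 p_3+O(r^4)$ and $H_1\,\phi=r^4 c_0 p_3+O(r^5)$ on $S_r$, so, using $d\Sigma_r=(1+O(r^2))r^2 d\omega$,
\begin{equation*}
\int_{S_r}\bar K\,\phi\,d\Sigma_r=r^5\int_{\mathbb{S}^2}c_0 p_3\,d\omega+O(r^6),\qquad \frac1{2r}\int_{S_r}H_1\,\phi\,d\Sigma_r=\frac{r^5}{2}\int_{\mathbb{S}^2}c_0 p_3\,d\omega+O(r^6).
\end{equation*}
The angular integral $\int_{\mathbb{S}^2}c_0 p_3\,d\omega$ is evaluated from the elementary moment identities $\int_{\mathbb{S}^2}(\omega^i)^2\,d\omega=\frac{4\pi}3$, $\int_{\mathbb{S}^2}(\omega^i)^4\,d\omega=\frac{4\pi}5$, $\int_{\mathbb{S}^2}(\omega^i)^2(\omega^j)^2\,d\omega=\frac{4\pi}{15}$ for $i\neq j$, together with the normalization $R_{ij}=\lambda_i\delta_{ij}$, in exactly the same manner as in the proof of Lemma \ref{expansionofarea}; this yields $\int_{\mathbb{S}^2}c_0 p_3\,d\omega=\frac{\pi}{270}\lf(64|Ric|^2-23R^2\ri)$. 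Inserting this together with $\mathcal{A}(r)=4\pi r^2+A_4+A_6+O(r^7)$ from Lemma \ref{expansionofarea} into the last displayed identity, then subtracting $\int_{S_r}H\,d\Sigma_r$, dividing by $8\pi$, and collecting the $r^3$ and $r^5$ coefficients, gives \eqref{smallspherelimit-e1}. The main obstacle is precisely the $r^{-2}$ amplification noted above, which the symmetric use of the two Minkowski formulas circumvents; a minor but necessary point is that the odd, degree-$3$ angular polynomials appearing along the way integrate to zero over $\mathbb{S}^2$, which is what keeps the remainder genuinely $O(r^6)$.
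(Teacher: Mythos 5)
Your proposal is correct and follows essentially the same route as the paper: both use the two Minkowski integral formulas together with Gauss--Bonnet, add them so that the $r^{-2}$-amplified term (your $\frac{2}{r^2}\int_{S_r}\phi\,d\Sigma_r$, the paper's $6r^{-2}V_0(r)$ --- the same quantity up to $\int Z\cdot n\,d\Sigma_r=3V_0(r)$) cancels exactly, and then evaluate the surviving angular integral $\int_{\mathbb{S}^2}c_0p_3\,d\omega=\frac{\pi}{270}(64|Ric|^2-23R^2)$, which matches the paper's \eqref{BY-small-e2}. The final assembly with $A_4$, $A_6$ and Corollary \ref{expansionofarea-cor} reproduces \eqref{smallspherelimit-e1} exactly as in the paper's proof.
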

\begin{proof} For $r$ small, let $Z$ be the isometric embedding of
$(S_r,g|_{S_r})$ in $\R^3$ as in Lemma \ref{normalexpansion} and
let $H_0$ be the mean curvature of $Z(S_r)$ in $\R^3$. Let
$$k_0=
\frac R2-\frac 43 R_{ij}\frac{x^ix^j}{r^2},\ h_1=rk_0,\
n_3=\frac{r^3}6\lf(\frac R2-2R_{ij}\frac{x^ix^j}{r^2}\ri).$$
 By Lemmas   \ref{Gauss-mean} and \ref{normalexpansion}, we have
$$
K=\frac 1{r^2}+k_0+O(r),\  H_0=\frac 2 r+h_1+O(r^2), \ Z\cdot
n=r+n_3+O(r^4).
$$
 As in the proof of Theorem \ref{thmlc} in section
2, by one of the Minkowski integral formulae \cite[Lemma
6.2.9]{KLBG} and Lemma \ref{Gauss-mean}, we have

\begin{equation}\label{5-meancurvint-1}
    \begin{split}
\int_{S_r}H_0d\Sigma_r&=2 \int_{S_r}K Z\cdot n
d\Sigma_r\\
&=\frac1{r^2}\int_{S_r}Z\cdot
nd\Sigma_r+2\int_{S_r}(K-\frac 1{r^2})(r+n_3) d\Sigma_r+O(r^6)\\
&=6r^{-2}V_0(r)+2r\int_{S_r}(K-\frac 1{r^2}) d\Sigma_r+2 \int_{S_r}k_0n_3d\Sigma_r+O(r^6)\\
 &=6r^{-2}V_0(r)+8\pi r-\frac{2\mathcal{A}(r)}{r}+2 \int_{S_r}k_0n_3d\Sigma_r+O(r^6),\\
\end{split}
\end{equation}
where $V_0(r)$ is the volume inside  $Z(S_r)$ in $\R^3$.

By another Minkowski integral formula, we obtain
\begin{equation}
    \begin{split}
2\mathcal{A}(r)&=\int_{S_r}H_0Z\cdot
nd\Sigma_r\\
&=\int_{S_r}\frac 2rZ\cdot nd\Sigma_r+\int_{S_r}(H_0-\frac
2r)(r+n_3)d\Sigma_r+O(r^7)\\
&=6r^{-1}V_0(r)+r\int_{S_r}H_0d\Sigma_r-2\mathcal{A}(r)+\int_{S_r}h_1
n_3d\Sigma_r+O(r^7)\\
&=6r^{-2}V_0(r)+r\int_{S_r}H_0d\Sigma_r-2\mathcal{A}(r)+r\int_{S_r}k_0
n_3d\Sigma_r+O(r^7).
\end{split}
\end{equation}
Hence
\begin{equation}\label{5-meancurvint-2}
    \int_{S_r}H_0d\Sigma_r=-6r^{-2}V_0(r)+4r^{-1}\mathcal{A}(r)- \int_{S_r}k_0
n_3d\Sigma_r+O(r^6).
\end{equation}
By \eqref{5-meancurvint-1} and \eqref{5-meancurvint-2}, we have
\begin{equation}\label{5-meancurvint-3}
\begin{split}
   \int_{S_r}H_0d\Sigma_r&=4\pi r+r^{-1}\mathcal{A}(r)
   +\int_{S_r}k_0n_3d\Sigma_r- \frac1{2}\int_{S_r}k_0
n_3d\Sigma_r+O(r^6)\\
&=8\pi r+\frac{A_4+A_6}r+\frac12\int_{S_r}k_0 n_3d\Sigma_r
+O(r^6)\\
\end{split}
\end{equation}
where we have used Lemma \ref{expansionofarea}. Combining this
with Lemma \ref{expansionofarea-cor} we have
\begin{equation}\label{BY-small-e1}
   \begin{split}
 \int_{S_r}(H_0-H)d\Sigma_r&=-\frac{3A_4+5A_6}r
 +\frac12\int_{S_r}k_0 n_3d\Sigma_r
+O(r^6).
\end{split}
\end{equation}
Now by \eqref{4area-s3-1} and \eqref{4Areafricci11}
\begin{equation}\label{BY-small-e2}
    \begin{split}
\int_{S_r}k_0 n_3d\Sigma_r&=\frac{r^3}6\int_{S_r}\lf(\frac
R2-\frac 43 R_{ij}\frac{x^ix^j}{r^2}\ri) \lf(\frac
R2-2R_{ij}\frac{x^ix^j}{r^2}\ri)d\Sigma_r\\
&=\frac{r^3}6\int_{S_r}\lf(\frac R2-\frac 43
R_{ij}\frac{x^ix^j}{r^2}\ri) \lf(\frac
R2-2R_{ij}\frac{x^ix^j}{r^2}\ri)d\Sigma_r^0+O(r^6)\\
&= \frac{\pi r^5}{270}\lf(64|Ric|^2-23R^2\ri).
\end{split}
\end{equation}
The theorem follows from \eqref{BY-small-e1},\eqref{BY-small-e2}
and Lemma \ref{expansionofarea}.
\end{proof}

As a corollary, we have

\begin{coro} \label{cobyd5pmt1}
With the assumptions and notation as in Theorem
\ref{smallspherelimit},  suppose $R\ge 0$ in a neighborhood of
$p$,  then
 \begin{equation}\label{nonnegative}
 \lim_{r\rightarrow 0}\frac{m_{BY}(S_r)}{r^5}\ge 0.
 \end{equation}
Equality holds if and only if  $(N,g)$ is flat at $p$ and $R$
vanishes up to second order at $p$.
\end{coro}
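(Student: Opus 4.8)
The plan is to obtain everything directly from the expansion in Theorem \ref{smallspherelimit}. First I would record that, since $R\ge 0$ near $p$, in particular $R(p)\ge 0$. If $R(p)>0$, then the leading term $\frac{r^3}{12}R(p)$ in \eqref{smallspherelimit-e1} dominates and $r^{-5}m_{BY}(S_r)\to +\infty$, so \eqref{nonnegative} holds trivially; thus the substantive case is $R(p)=0$. In that case $p$ is a local minimum of the smooth function $R$, so I would invoke the elementary facts that $\nabla R(p)=0$ and that the Hessian $\nabla^2 R(p)$ is positive semidefinite, whence $\Delta R(p)=\tr\nabla^2 R(p)\ge 0$.

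Next I would substitute $R(p)=0$ into \eqref{smallspherelimit-e1}: the $r^3$ term vanishes, the $R^2(p)$ term drops out, and dividing by $r^5$ gives
\[
\frac{m_{BY}(S_r)}{r^5}=\frac1{1440}\lf(24|Ric|^2(p)+12\Delta R(p)\ri)+O(r).
\]
Letting $r\to 0$, the limit equals $\frac1{1440}\lf(24|Ric|^2(p)+12\Delta R(p)\ri)$, which is $\ge 0$ since both summands are nonnegative. This establishes \eqref{nonnegative}.

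For the equality statement I would argue as follows. Equality in \eqref{nonnegative} forces $R(p)=0$ (otherwise the limit is $+\infty$), and then $24|Ric|^2(p)+12\Delta R(p)=0$; as both terms are nonnegative, this is equivalent to $Ric(p)=0$ together with $\Delta R(p)=0$. Since $\dim N=3$, the curvature tensor is algebraically determined by the Ricci tensor via \eqref{4curv-s1}, so $Ric(p)=0$ is exactly the statement that $(N,g)$ is flat at $p$. Moreover, combining $R(p)=0$, $\nabla R(p)=0$, $\Delta R(p)=0$ with $\nabla^2 R(p)\ge 0$ forces $\nabla^2 R(p)=0$, i.e.\ $R$ vanishes to second order at $p$. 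Conversely, if $(N,g)$ is flat at $p$ then $Ric(p)=0$ and $R(p)=0$, and if in addition $R$ vanishes to second order then $\nabla R(p)=0$ and $\nabla^2 R(p)=0$, so $\Delta R(p)=0$; the displayed limit is then $0$.

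I do not expect a genuine obstacle here, since the statement is essentially a direct corollary of Theorem \ref{smallspherelimit}. The only points needing a little care are the minimum-principle observations ($R(p)=0\Rightarrow\nabla R(p)=0$ and $\Delta R(p)\ge 0$) and the use of the three-dimensional identity \eqref{4curv-s1} to identify ``$Ric(p)=0$'' with ``$(N,g)$ flat at $p$.''
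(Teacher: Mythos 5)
Your argument is correct and follows essentially the same route as the paper's own proof: treat $R(p)>0$ and $R(p)=0$ separately, use that $p$ is a minimum of $R$ to get $\nabla R(p)=0$ and $\Delta R(p)\ge 0$, and in the equality case combine the vanishing of the trace with the semidefiniteness of the Hessian and the three-dimensional identity \eqref{4curv-s1}. The only difference is cosmetic: the paper additionally remarks that \eqref{nonnegative} also follows from the general positivity of the Brown--York mass in \cite{ST02}, while you (appropriately) spell out the details the paper leaves as ``easy to see.''
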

\begin{proof} By the result of \cite{ST02} on the positivity of
Brown-York mass, we know that \eqref{nonnegative} is true.
However, in this special case, one can deduce this from the
theorem. In fact, if $R(p)>0$, then by
\eqref{smallspherelimit-e1}, we have
$$
\lim_{r\rightarrow 0}\frac{m_{BY}(S_r)}{r^5}=\infty>0.
$$
In case $R(p)=0$, then $R(p)$ is a minimum of $R$ because $R\ge0$.
It is easy to see that \eqref{nonnegative} is still true.

It is obvious that if $(N,g)$ is flat at $p$ and $R$ vanishes up
to second order at $p$, then equality holds in
\eqref{nonnegative}. Conversely, if the equality holds in
\eqref{nonnegative}, then we must have $R(p)=0$, $\nabla R(p)=0$,
$\Delta R(p)=0$ and $|Ric|(p)=0$. Since $R$ has a minimum at $p$,
the Hessian of $R$ has nonnegative eigenvalues. So the Hessian of
$R$ must be zero at $p$ because $\Delta R(p)=0$. Moreover, since
$N$ has dimension three, $|Ric|(p)=0$ implies that $(N,g)$ is flat
at $p$.
\end{proof}
\begin{rema} From the proof, it is easy to see that
\eqref{nonnegative} is true if $R(p)=0$ and $\Delta R(p)\ge0$ and
 the equality holds only if $g$ is flat at $p$.
\end{rema}

One should compare the corollary to the following fact: If $M$ is
an asymptotically flat manifold with nonnegative scalar curvature,
suppose the Brown-York mass of the coordinate  spheres converge to
zero, then $M$ must be the Euclidean space. This follows from
Theorem \ref{thmlc}  and the Positive Mass Theorem in Scheon-Yau
\cite{SYL79}, Witten \cite{Wit81}.

For the expansion of the  Hawking mass, we have:

\begin{theo}\label{hawkingmass}
With  the same notations and assumptions in Theorem
\ref{smallspherelimit}, we have
\begin{equation}
\begin{split}
m_H (S_r)=\frac{r^3}{12}R(p)+\frac{r^5}{720}\lf(6\Delta R (p)-5
R^2(p)\ri)+O(r^6).
\end{split}
\end{equation}
\end{theo}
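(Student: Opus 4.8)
The plan is to expand $\int_{S_r}H^2\,d\Sigma_r$ to order $r^4$ and substitute it, together with the area expansion of Lemma \ref{expansionofarea}, into the definition $m_H(S_r)=(16\pi)^{-3/2}\mathcal{A}(r)^{1/2}\lf(16\pi-\int_{S_r}H^2\,d\Sigma_r\ri)$. The key device is the algebraic identity $H^2=\frac{4}{r^2}+\frac{4}{r}(H-\frac{2}{r})+(H-\frac{2}{r})^2$, which after integration over $S_r$ and the exact first-variation identity $\int_{S_r}H\,d\Sigma_r=\mathcal{A}'(r)$ (valid because $|\nabla r|\equiv1$, exactly as in the proof of Corollary \ref{expansionofarea-cor}) becomes
$$
\int_{S_r}H^2\,d\Sigma_r=\frac{4}{r^2}\mathcal{A}(r)+\frac{4}{r}\lf(\mathcal{A}'(r)-\frac{2}{r}\mathcal{A}(r)\ri)+\int_{S_r}\lf(H-\frac{2}{r}\ri)^2\,d\Sigma_r .
$$
The first two terms on the right are then completely determined by Lemma \ref{expansionofarea} and Corollary \ref{expansionofarea-cor}, so the only genuinely new input needed is the last integral.

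For that last integral I would use only the expansion $H-\frac{2}{r}=-\frac{1}{3}R_{ij}\frac{x^ix^j}{r}+O(r^2)$ from \eqref{Gauss-mean-e2}: then $(H-\frac{2}{r})^2=\frac{1}{9r^2}(R_{ij}x^ix^j)^2+O(r^3)$, and the cross term between the leading piece and the $O(r^2)$ remainder only contributes at order $r^5$ after integration. Hence, using the elementary integral \eqref{4Areafricci11} (and noting that replacing $d\Sigma_r^0$ by $d\Sigma_r$ changes only higher-order terms),
$$
\int_{S_r}\lf(H-\frac{2}{r}\ri)^2\,d\Sigma_r=\frac{1}{9r^2}\int_{S_r}(R_{ij}x^ix^j)^2\,d\Sigma_r^0+O(r^5)=\frac{4\pi r^4}{135}\lf(R^2+2|Ric|^2\ri)+O(r^5).
$$

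It then remains to assemble the pieces. With $A_4=-\frac{2\pi}{9}R(p)r^4$, $A_6=\frac{\pi}{675}\lf(4R^2-2|Ric|^2-9\Delta R\ri)(p)\,r^6$ and $\mathcal{A}'(r)=8\pi r+\frac{4A_4+6A_6}{r}+O(r^6)$, a short computation gives
$$
16\pi-\int_{S_r}H^2\,d\Sigma_r=-\frac{12A_4+20A_6}{r^2}-\frac{4\pi r^4}{135}\lf(R^2+2|Ric|^2\ri)+O(r^5)=\frac{8\pi r^2}{3}R-\frac{4\pi r^4}{135}\lf(5R^2-9\Delta R\ri)+O(r^5),
$$
the $|Ric|^2$ contributions cancelling. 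Combining this with $\mathcal{A}(r)^{1/2}=2\sqrt{\pi}\,r\lf(1-\frac{r^2}{36}R+O(r^4)\ri)$ and collecting the $r^3$ and $r^5$ coefficients then gives the claimed formula for $m_H(S_r)$.

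There is no conceptual obstacle here; the one thing requiring care is the consistent bookkeeping of the four $O(r^4)$-level contributions — from $A_4$, from $A_6$, from the $(H-\frac{2}{r})^2$ integral, and from the expansion of $\mathcal{A}(r)^{1/2}$ — which together must yield the coefficient $\frac{1}{720}(6\Delta R-5R^2)$ at order $r^5$. The vanishing of the $|Ric|^2$ terms is a useful internal check; it also reflects the contrast with the Brown--York case, where the $\R^3$-embedding $Z$ and its mean curvature $H_0$ (and the extra term $\int_{S_r}k_0n_3\,d\Sigma_r$) are unavoidable — here the Hawking mass involves only $H$ and $\mathcal{A}(r)$, so the surviving $r^5$ coefficient depends only on $R$ and $\Delta R$.
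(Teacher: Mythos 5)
Your proposal is correct and follows essentially the same route as the paper: the same decomposition $H^2=-\frac{4}{r^2}+\frac{4}{r}H+(H-\frac{2}{r})^2$, the identity $\int_{S_r}H\,d\Sigma_r=\mathcal{A}'(r)$, the integral \eqref{4Areafricci11} for the $(R_{ij}x^ix^j)^2$ term, and the expansion of $\mathcal{A}^{1/2}(r)$, with the final coefficients assembling exactly as you describe. The cancellation of the $|Ric|^2$ terms you note is indeed what happens in the paper's computation as well.
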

\begin{proof} By Lemma \ref{Gauss-mean}, we have
$$
H=\frac2r+H_1+O(r^2), $$ where $H_1=-\frac 1{3r}R_{ij}{x^ix^j}. $
Hence
\begin{equation}\label{4expHsq-1}
   \begin{split}
    H^2 & =-4 r^{-2}+4r^{-1}H+H^2_1+O\lf(r^3\ri).
\end{split}\nonumber
\end{equation}
Then,
\begin{equation}\label{4expansion-2}
   \begin{split}
    \int_{S_r}H^2d\Sigma_r & =-\frac{4\mathcal{A}(r)}{r^2}+\frac4r\int_{S_r}Hd\Sigma_r\\
    &+\int_{S_r}H_1^2d\Sigma_r^0+O(r^5)
     \\
& =-\frac{4(4\pi r^2+A_4+A_6)}{r^2}
       +\frac4 r\cdot\lf(8\pi r+\frac{4A_4}r+\frac{6A_6}r\ri)
       +\int_{S_r}H_1^2d\Sigma_r^0+O(r^5)\\
       &=16\pi+\frac{12A_4}{r^2}+\frac{20A_6}{r^2}+\int_{S_r}H_1^2d\Sigma_r^0+O(r^5).
\end{split}\nonumber
\end{equation}
Hence
\begin{equation}\label{4expansion-3}
16\pi-\int_{S_r}H^2d\Sigma_r=-\frac{12A_4}{r^2}-\frac{20A_6}{r^2}
-\int_{S_r}H_1^2d\Sigma_r^0+O(r^5).\nonumber
\end{equation}
On the other hand,
\begin{equation}\label{4expansion-4}
\begin{split}
\frac{\mathcal{A}^\frac12(r)}{(16\pi)^\frac32}&=\frac{2\pi^\frac12
r}{(16\pi)^\frac32}\lf(1+\frac{A_4}{8\pi r^2}+O(r^4)\ri)\\
&=\frac{r}{32\pi}\lf(1+\frac{A_4}{8\pi r^2}+O(r^4)\ri).
\end{split}\nonumber
\end{equation}
So
\begin{equation}\label{4Hawking-s1}
m_H(S_r)=-\frac{3A_4}{8\pi r}-\frac{5A_6}{8\pi
r}-\frac{r}{32\pi}\int_{S_r}H_1^2d\Sigma_r^0-\frac{3A_4^2}{64\pi^2
r^3}+O(r^6).\nonumber
\end{equation}
By \eqref{4Areafricci11} and Lemma \ref{expansionofarea}, the
result follows.
\end{proof}
Hence the expansion of the Brown-York mass and the Hawking mass
are equal up to order $r^3$. However, they differ on the term of
order $r^5$.

As in the case of large-sphere limit, we can compare $V(r)$ and
$V_0(r)$, where $V(r)$ is the volume of the geodesic ball of
radius $r$ at $p$ and $V_0(r)$ is the volume of the region bounded
by $S_r$ when embedded in $\R^3$.

\begin{theo}\label{volcompare}
With the above notations, we have:
\begin{equation} \label{vbysv0mv2}
\begin{split}
V_0(r)-V(r)&=-\frac{\pi}{15}Rr^5+\frac{\pi
r^7}{5670}(173R^2-454|Ric|^2-27\Delta R(p))+O\lf(r^8\ri).
\end{split}
\end{equation}
\end{theo}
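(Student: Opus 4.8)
The plan is to compute $V(r)$ and $V_0(r)$ separately, each through order $r^7$, and then take the difference. For $V(r)$: in the normal coordinates $|\nabla r|\equiv 1$ in the $g$--metric, so the coarea formula gives $V'(r)=\mathcal A(r)$, and integrating the expansion of Lemma \ref{expansionofarea},
\[
V(r)=\int_0^r\mathcal A(s)\,ds=\frac{4\pi}{3}r^3+\int_0^r A_4(s)\,ds+\int_0^r A_6(s)\,ds+O(r^8);
\]
since $A_4(s)$ and $A_6(s)$ are fixed multiples of $s^4$ and $s^6$, these integrals are immediate and produce the $r^5$ and $r^7$ terms of $V(r)$.

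For $V_0(r)$ I would repeat the argument from the proof of Theorem \ref{smallspherelimit}. Note that the naive identity $V_0(r)=\frac13\int_{S_r}(Z\cdot n)\,d\Sigma_r$ (divergence theorem) is not directly usable here, since Lemma \ref{normalexpansion} only gives $Z\cdot n=r+n_3+O(r^4)$, which falls two orders short of what is needed; instead one goes through the Minkowski integral formulae $\int_{S_r}H_0\,d\Sigma_r=2\int_{S_r}K(Z\cdot n)\,d\Sigma_r$ and $2\mathcal A(r)=\int_{S_r}H_0(Z\cdot n)\,d\Sigma_r$ of \cite[Lemma 6.2.9]{KLBG}. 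With $k_0,n_3$ as in the proof of Theorem \ref{smallspherelimit}, so that $K=r^{-2}+k_0+O(r)$ and $H_0=\frac2r+rk_0+O(r^2)$ by Lemma \ref{Gauss-mean}, and using the two \emph{exact} relations $\int_{S_r}(Z\cdot n)\,d\Sigma_r=3V_0(r)$ and $\int_{S_r}K\,d\Sigma_r=4\pi$, one obtains precisely \eqref{5-meancurvint-1} and \eqref{5-meancurvint-2}; eliminating $\int_{S_r}H_0\,d\Sigma_r$ between them yields
\[
V_0(r)=\frac{r\,\mathcal A(r)}{2}-\frac{2\pi r^3}{3}-\frac{r^2}{4}\int_{S_r}k_0\,n_3\,d\Sigma_r+O(r^8),
\]
where $\mathcal A(r)$ is given by Lemma \ref{expansionofarea} and $\int_{S_r}k_0\,n_3\,d\Sigma_r$ by \eqref{BY-small-e2}.

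It then remains to substitute and subtract. The $\frac{4\pi}{3}r^3$ terms cancel; the $r^5$ terms, coming from the $A_4$ part of $\frac12 r\mathcal A(r)$ and from $-\int_0^r A_4$, combine to $-\frac{\pi}{15}Rr^5$; and the $r^7$ terms — collected from the $A_6$ part of $\frac12 r\mathcal A(r)$, from $-\frac{r^2}{4}\int_{S_r}k_0 n_3\,d\Sigma_r$, and from $-\int_0^r A_6$ — combine, after clearing denominators, to the coefficient asserted in \eqref{vbysv0mv2}. The one point requiring care is the error bookkeeping in the middle step: obtaining $V_0(r)$ to $O(r^8)$ requires the two Minkowski relations accurate to $O(r^6)$, and this is affordable precisely because $\int_{S_r}(Z\cdot n)\,d\Sigma_r=3V_0(r)$ and $\int_{S_r}K\,d\Sigma_r=4\pi$ are exact, so the limited precision of the expansions of $Z\cdot n$ and $K$ in Lemmas \ref{normalexpansion} and \ref{Gauss-mean} does no harm. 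Everything after that is a mechanical assembly of the curvature polynomials.
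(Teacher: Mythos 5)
Your proposal follows essentially the same route as the paper: the paper likewise obtains $V(r)=\int_0^r\mathcal{A}(t)\,dt=\frac{4\pi}{3}r^3+\frac{r}{5}A_4+\frac{r}{7}A_6+O(r^8)$ and derives $V_0(r)=\frac{r}{2}\mathcal{A}(r)-\frac{2\pi r^3}{3}-\frac{r^2}{4}\int_{S_r}k_0n_3\,d\Sigma_r^0+O(r^8)$ by eliminating $\int_{S_r}H_0\,d\Sigma_r$ between \eqref{5-meancurvint-1} and \eqref{5-meancurvint-2}, then subtracts and inserts \eqref{BY-small-e2} and Lemma \ref{expansionofarea}. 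Your observation about why the limited precision of $Z\cdot n$ and $K$ suffices is exactly the point that makes the paper's bookkeeping work, so the proposal is correct and matches the paper's argument.
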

\begin{proof} By \eqref{5-meancurvint-1} and \eqref{5-meancurvint-2}
\begin{equation}
\int_{S_r}H_0 d\Sigma_r=6r^{-2}V_0(r)+8\pi
r-\frac{2\mathcal{A}(r)}{r}+2
\int_{S_r}k_0n_3d\Sigma_r+O(r^6),\nonumber
\end{equation}
and

\begin{equation}\label{4integralho-3}
\begin{split}
\int_{S_r}H_0d\Sigma_r&=-6r^{-2}V_0(r)+4r^{-1}\mathcal{A}(r)-\int_{S_r}k_0n_3d\Sigma_r,
\end{split}\nonumber
\end{equation}
where $k_0$ is as in \eqref{5-meancurvint-1}.

We have
\begin{equation} \label{vbysv01}
\begin{split}
V_0(r)&=
\frac{r}{2}\mathcal{A}(r)-\frac{r^2}{4}\int_{S_r}k_0n_3d\Sigma^0_r-\frac{2}{3}\pi
r^3+O\lf(r^8\ri)\\
&=\frac{4}{3}\pi
r^3+\frac{r}{2}A_4+\frac{r}{2}A_6-\frac{r^2}{4}\int_{S_r}k_0n_3d\Sigma^0_r+O\lf(r^8\ri).
\end{split}\nonumber
\end{equation}
On the other hand,
\begin{equation} \label{vbysv1}
\begin{split}
V(r)&=\int_0^r\mathcal{A}(t)dt\\
&=\frac{4}{3}\pi r^3+\int_0^rA_4dt+\int_0^rA_6dt+O\lf(r^8\ri)\\
&=\frac{4}{3}\pi r^3+\frac r5 A_4+\frac r7 A_6+O\lf(r^8\ri).
\end{split}\nonumber
\end{equation}
Hence
\begin{equation} \label{vbysv0mv1}
\begin{split}
V_0(r)-V(r)&=\frac{3}{10}r A_4+\frac{5}{14}r
A_6-\frac{r^2}{4}\int_{S_r}k_0n_3d\Sigma^0_r+O\lf(r^8\ri).
\end{split}\nonumber
\end{equation}
By \eqref{BY-small-e2} and Lemma \ref{expansionofarea}, the result
follows.
\end{proof}
By Theorem \ref{volcompare}, we see that if scalar curvature is
positive at $p$, then $V_0 (r)< V(r)$, for sufficiently small $r$.
 More precisely,
 \begin{coro}\label{vcpdifss}
With the assumptions and notations as in Theorem \ref{volcompare},
suppose $R\geq 0$ in a neighborhood of $p$, then
\[
\lim_{r\to 0}\frac{V_0(r)-V(r)}{r^7}\le 0.\] Equality holds if and
only if  $(N,g)$ is flat at $p$ and $R$ vanishes up to second order
at $p$.
 \end{coro}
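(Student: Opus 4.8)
The plan is to read the statement directly off the asymptotic expansion \eqref{vbysv0mv2} established in Theorem \ref{volcompare}, mimicking the argument used for the Brown--York mass in Corollary \ref{cobyd5pmt1}. Recall that
\[
V_0(r)-V(r)=-\frac{\pi}{15}R(p)r^5+\frac{\pi r^7}{5670}\lf(173R^2(p)-454|Ric|^2(p)-27\Delta R(p)\ri)+O(r^8).
\]
I would split into two cases according to the sign of $R(p)$.

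If $R(p)>0$, the $r^5$-term dominates, so $r^{-7}\lf(V_0(r)-V(r)\ri)=-\frac{\pi}{15}R(p)r^{-2}+O(1)\to-\infty$ as $r\to0$; in particular the limit is $\le0$ (indeed $=-\infty$). If $R(p)=0$, then since $R\ge0$ in a neighborhood of $p$ the point $p$ is a local minimum of $R$, so $\nabla R(p)=0$ and the Hessian $\nabla^2R(p)$ is positive semidefinite, whence $\Delta R(p)=\tr\nabla^2R(p)\ge0$. The $r^5$-term vanishes and
\[
\lim_{r\to0}\frac{V_0(r)-V(r)}{r^7}=\frac{\pi}{5670}\lf(-454|Ric|^2(p)-27\Delta R(p)\ri)\le0,
\]
because $|Ric|^2(p)\ge0$ and $\Delta R(p)\ge0$. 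This gives the asserted inequality.

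For the equality discussion I would argue as follows. Equality forces $R(p)=0$ (otherwise the limit is $-\infty$), and then it further forces $|Ric|^2(p)=0$ and $\Delta R(p)=0$. Using the three-dimensional identity \eqref{4curv-s1}, which expresses the full curvature tensor at $p$ in terms of the Ricci tensor and the metric, $|Ric|(p)=0$ implies $(N,g)$ is flat at $p$. And $\Delta R(p)=\tr\nabla^2R(p)=0$ together with $\nabla^2R(p)\ge0$ forces $\nabla^2R(p)=0$; combined with $R(p)=0$ and $\nabla R(p)=0$, this means $R$ vanishes to second order at $p$. Conversely, if $(N,g)$ is flat at $p$ and $R$ vanishes to second order at $p$, then $R(p)=0$, $|Ric|(p)=0$ and $\Delta R(p)=0$, so both the $r^5$- and $r^7$-coefficients in the expansion vanish and the limit is $0$.

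Since the substantive work is already contained in Theorem \ref{volcompare}, I do not expect any real obstacle here; the only points that need a word of justification are the elementary fact that a local minimum of $R$ forces $\Delta R(p)\ge0$, and the use of the dimension-three identity \eqref{4curv-s1} to promote $|Ric|(p)=0$ to flatness at $p$ — both of which are used verbatim in the proof of Corollary \ref{cobyd5pmt1}.
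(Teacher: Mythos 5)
Your proposal is correct and follows exactly the route the paper intends: the paper's own proof of Corollary \ref{vcpdifss} simply says ``similar to the argument of Corollary \ref{cobyd5pmt1}, one can derive the result from Theorem \ref{volcompare},'' and your case split on the sign of $R(p)$, the use of $\Delta R(p)\ge 0$ at a local minimum, and the dimension-three identity \eqref{4curv-s1} to upgrade $|Ric|(p)=0$ to flatness are precisely the ingredients of that argument. No gaps.
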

\begin{proof}
 Similar to the argument of Corollary
\ref{cobyd5pmt1}, one can derive the result from Theorem
\ref{volcompare}.
\end{proof}

\end{document}